\newtheoremstyle{example-style}{5pt}{0pt}{}{}{\scshape}{:}{.5em}{}
\newtheorem{Thm}{Theorem}[section]
\newtheorem{conj}[Thm]{Conjecture}
\newtheorem{lem}[Thm]{Lemma}
\newtheorem{cor}[Thm]{Corollary}
\newtheorem{prop}[Thm]{Proposition}
\newtheorem{Problem}[Thm]{Problem}
\newtheorem{compfact}[Thm]{Computational fact}
\theoremstyle{example-style}
\newtheorem{Example}[Thm]{Example}
\theoremstyle{definition}
\newtheorem{Remark}[Thm]{Remark}
\newcommand{\stirlingtwo}[2]{\genfrac{\{}{\}}{0pt}{}{#1}{#2}}
\numberwithin{equation}{section}
\setlist[enumerate]{leftmargin=*, font=\upshape, label=\alph*)} % Remove left margin from enumerate
\setlist[itemize]{leftmargin=*} % Remove left margin from itemize
\let\@@pmod\pmod
\DeclareRobustCommand{\pmod}{\@ifstar\@pmods\@@pmod}
\def\@pmods#1{\mkern4mu({\operator@font mod}\mkern 6mu#1)}
\begin{document}
\date{May 1, 2018}

%\title{Higher order derivatives of the cyclotomic polynomial evaluated at $\pm 1$}
%\title[Higher order derivatives of cyclotomic polynomials]{Higher order derivatives of cyclotomic polynomials and their connection with Bernoulli and Stirling numbers}
%\title[\andres{Derivatives of cyclotomic polynomials: old and %new}]{\andres{New title}}
\title[Coefficients and higher order derivatives of cyclotomic polynomials]{Coefficients and higher order derivatives of cyclotomic polynomials: old and new}
%\author{Andr\'es Herrera-Poyatos and Pieter Moree}
%\date{September 2017}

\author[Herrera-Poyatos]{Andr\'es Herrera-Poyatos}
\address{Departamento de \'Algebra, Universidad de Granada, E-18071 Granada, Espa\~na}
\email{andreshp9@gmail.com}

\author[Moree]{Pieter Moree}
\address{Max-Planck-Institut f\"ur Mathematik, Vivatsgasse 7, D-53111 Bonn, Germany}
\email{moree@mpim-bonn.mpg.de}

{\def\thefootnote{}
\footnote{{\it Mathematics Subject Classification (2000)}. 11N37, 11Y60.}}
\maketitle
\setcounter{footnote}{0}

\begin{abstract} The $n^{th}$ cyclotomic polynomial 
$\Phi_n(x)$ is the minimal polynomial of an $n^{th}$ primitive 
root of unity. Its coefficients are the subject of
intensive study
and some formulas 
are known for them. Here we are interested in formulas
which are valid for all natural numbers $n$.
In these a host of famous number theoretical 
objects such as Bernoulli numbers, Stirling numbers of both kinds and Ramanujan 
sums make their appearance, sometimes even at the same time! 

In this paper we present a survey of these 
formulas which until now were scattered in the literature 
and introduce an unified approach to derive 
some of them, leading
also to shorter proofs as a by-product.
In particular, we show that some of the formulas have a more
elegant reinterpretation in terms of Bell polynomials.
This approach amounts to computing the logarithmic derivatives of $\Phi_n$ at certain points.
Furthermore, we show that the logarithmic derivatives  at $\pm 1$ of any Kronecker polynomial (a monic product of cyclotomic 
polynomials and a monomial) satisfy a family of linear equations whose coefficients are Stirling numbers of the second kind.
We apply these equations to show that certain polynomials are not Kronecker. In particular, we infer
that for every $k\ge 4$ there exists a symmetric
numerical semigroup with embedding dimension $k$ and Frobenius number $2k+1$ that is not cyclotomic, thus  establishing a conjecture of Alexandru Ciolan, Pedro Garc\'ia-S\'anchez and the second author. 
In an appendix Pedro Garc\'ia-S\'anchez shows that
for every $k\ge 4$ there exists a symmetric 
non-cyclotomic numerical semigroup having Frobenius
number $2k+1.$
\end{abstract}
\section{Introduction}
Various aspects of cyclotomic polynomials have been extensively studied from different perspectives, in 
particular their coefficients. 
Let us write the $n^{th}$ cyclotomic polynomial\footnote{In Section \ref{prelim} this and other relevant number theoretical objects are defined and described.} as follows
\begin{equation*}
\label{thangoo1}
\Phi_n(x)=\sum_{j=0}^{\varphi(n)}a_n(j)x^j.
\end{equation*}
The coefficients $a_n(j)$ are usually very small. Indeed, 
in the $19^{th}$ century
mathematicians even
thought that they are always $0$ or $\pm1$. The first counterexample to this claim occurs at $n = 105$, namely one has $a_{105}(7) = -2$. 
Issai Schur in a letter to Edmund Landau sketched an argument
showing that cyclotomic coefficients are unbounded.
His argument is easily adapted to show that 
$\{a_n(j):n\ge 1,~j\ge 0\}=\mathbb Z$, that is,
every integer is assumed as value of a cyclotomic coefficient.
For the best result to date in this direction
see Fintzen \cite{Fintzen}. 
\par Currently computations can be extended enormously far beyond
$n = 105,$ cf. Figure 1. These and theoretical considerations using
analytic number theory 
(cf. various papers of Helmut
Maier \cite{Maier}, who amongst others solved a long standing
conjecture of
Erd\H{o}s on cyclotomic coefficients), show clearly
that the complexity of the coefficients is a function of the number
of distinct odd prime factors of $n$, much rather than the size of
$n$. Complex patterns 
arise (see Figure \ref{fig:coeffs}) and a lot of mysteries remain. 

\begin{figure}
  \centering
  \includegraphics[width=0.75\textwidth]{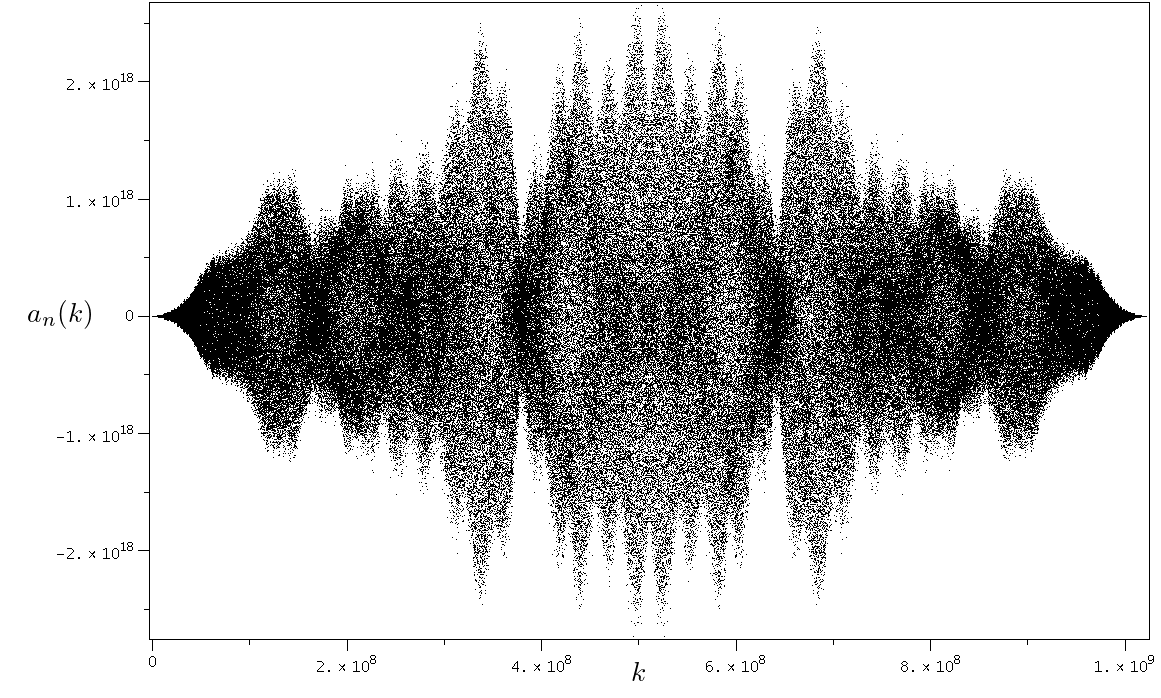}
  \caption{Coefficients of the $n^{th}$ cyclotomic polynomial for $n = 3234846615
 = 3 \cdot 5 \cdot 7 \cdot 11 \cdot 13 \cdot 17 \cdot 19 \cdot 23 \cdot 29$, cf. \cite{AMData}.}
  \label{fig:coeffs}
\end{figure}

This paper has two parts. In the first one, Sections \ref{sec:phi-log-deriv} and \ref{sec:coefficientformulas}, we collect formulas known for 
coefficients of cyclotomic polynomials. 
Some of these formulas are obtained from the derivatives of the function $\log \Phi_n$ at some given point, which we investigate in Section \ref{sec:phi-log-deriv}. 
It seems that Nicol \cite{Nicol} was the first to study
those derivatives in 1962. He showed that
\begin{equation} \label{eq:nicol}
  \log \Phi_n(x) = - \sum_{j = 1}^\infty \frac{r_j(n)}{j} x^j \qquad (|x| < 1, n > 1),
\end{equation}
where the $r_j(n)$ denote Ramanujan sums. 
Four years later D. Lehmer pointed out the connection between the derivatives of $\log \Phi_n$ at $0$ and the coefficients of $\Phi_n$ \cite[Section 2]{Lehmer}.
He also expressed the $k^{th}$ derivative of $\Phi_n$ at $1$ in terms of Bernoulli numbers $B_t^+$, Stirling numbers of the first kind $s(j,t)$ and Jordan totient functions $J_t.$
\begin{Thm}[{\cite[Theorem 3]{Lehmer}}] \label{thm:kth-deriv:1}
  Let $k \ge 1$ and $n\ge 2$ be integers. 
  Then
\[\frac{\Phi_n^{(k)}(1)}{\Phi_n(1)}= k! \sum \prod_{j=1}^k\frac{1}{\lambda_j!} \left(\frac{1}{j!}\sum_{t=1}^{j} \frac{B_t^+ s(j,t)}{t} J_t(n)\right)^{\lambda_j},  \]
  where the sum is over all the non-negative integers $\lambda_1, \ldots, \lambda_k$ such that $\lambda_1 + 2 \lambda_2 + \cdots  + k\lambda_k = k$.
\end{Thm}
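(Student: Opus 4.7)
The approach is to combine three ingredients: the complete Bell polynomial expansion for derivatives of an exponential, a Bernoulli-number series for $\log\Phi_n(e^u)$, and the Stirling change of variables between derivatives in $x$ and in $u=\log x$.

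First I would write $F(x):=\log\Phi_n(x)$; this is analytic near $x=1$ because $\Phi_n(1)$ equals $p$ when $n=p^m$ is a prime power and $1$ otherwise, and in both cases is nonzero. Faà di Bruno's formula (equivalently, the identity $(e^F)^{(k)}=e^F\,B_k(F',\dots,F^{(k)})$ for the complete Bell polynomial $B_k$) gives
\[\frac{\Phi_n^{(k)}(1)}{\Phi_n(1)} \;=\; k!\sum_{\lambda_1+2\lambda_2+\cdots+k\lambda_k=k}\; \prod_{j=1}^{k}\frac{1}{\lambda_j!}\left(\frac{F^{(j)}(1)}{j!}\right)^{\lambda_j},\]
so the theorem reduces to proving
\[F^{(j)}(1)\;=\;\sum_{t=1}^{j}\frac{B_t^{+}\,s(j,t)}{t}\,J_t(n).\qquad(\star)\]

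To establish $(\star)$ I would pass to the variable $u=\log x$ and set $G(u):=F(e^u)$. From the Möbius factorization $\Phi_n(x)=\prod_{d\mid n}(x^d-1)^{\mu(n/d)}$, extracting $du$ out of each $e^{du}-1$ and invoking both $\sum_{d\mid n}\mu(n/d)=0$ (for $n>1$) and the von Mangoldt identity $\sum_{d\mid n}\mu(n/d)\log d=\Lambda(n)$ yields
\[\Phi_n(e^u)\;=\;\Phi_n(1)\prod_{d\mid n}\left(\frac{e^{du}-1}{du}\right)^{\mu(n/d)}.\]
A short calculation with the generating function $u/(1-e^{-u})=\sum_{j\ge 0}B_j^{+}u^j/j!$ gives $\log((e^u-1)/u)=\sum_{j\ge 1}B_j^{+}u^j/(j\cdot j!)$, so together with the Jordan totient identity $\sum_{d\mid n}\mu(n/d)d^j=J_j(n)$ I obtain the clean expansion
\[G(u)\;=\;\log\Phi_n(1)\;+\;\sum_{j\ge 1}\frac{B_j^{+}\,J_j(n)}{j\cdot j!}\,u^j,\qquad\text{whence}\qquad G^{(m)}(0)=\frac{B_m^{+}\,J_m(n)}{m}.\]

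Finally I would convert derivatives of $G=F\circ\exp$ at $u=0$ back to derivatives of $F$ at $x=1$. Since $\left.(e^u)^{(\ell)}\right|_{u=0}=1$ for every $\ell\ge 1$, Faà di Bruno collapses to $G^{(m)}(0)=\sum_{p=1}^{m}S(m,p)\,F^{(p)}(1)$, and inverting this triangular system via the orthogonality relation $\sum_{m} s(p,m)S(m,q)=\delta_{p,q}$ for Stirling numbers produces exactly $(\star)$. The single delicate point is recognizing that $\Phi_n(e^u)$ is analytic at $u=0$: each factor $(e^{du}-1)^{\mu(n/d)}$ is individually singular there, and the singularities cancel only collectively, thanks to $\sum_{d\mid n}\mu(n/d)=0$; once this regularization is made explicit, the remaining steps are routine manipulations of Bernoulli and Stirling identities.
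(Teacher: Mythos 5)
Your proposal is correct and follows essentially the same route as the paper: reduce via Fa\`a di Bruno and complete Bell polynomials to computing $(\log\Phi_n)^{(j)}(1)$ (Theorem \ref{thm:phi:1}), then obtain those values from the M\"obius factorization, the substitution $x=e^u$ and the Bernoulli generating function (Theorems \ref{thm:gessel} and \ref{thm:kth-log-deriv:1}). The only cosmetic difference lies in how the Stirling conversion between $u$-derivatives at $0$ and $x$-derivatives at $1$ is carried out: the paper reads off the $(x-1)$-Taylor coefficients via the generating function \eqref{eq:stirling1:gf:partial} for $s(k,j)$, whereas you apply the chain rule to $F\circ\exp$ (producing $\stirlingtwo{m}{p}$) and invert the triangular system with \eqref{eq:stirling:matrix} --- two equivalent implementations of the same computation, and your intermediate identity $G^{(m)}(0)=B_m^{+}J_m(n)/m$ is exactly the paper's Corollary \ref{cor:log-kronecker:k:cyclo}\ref{item:log-cyclo} derived in the opposite direction.
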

As an example let us consider the case $k=2.$ Then one
obtains that for $n \ge 2$ 
\begin{equation}
    \label{formula:k=2}
  \frac{\Phi_n''(1)}{\Phi_n(1)}=\frac{\varphi(n)}{4}\left(\varphi(n) + \frac{\Psi(n)}{3} - 2\right).
\end{equation}  
In this case there are two 
partitions $(\lambda_1,\lambda_2,\ldots)$, namely
$(2,0,\ldots)$ and $(0,1,\ldots)$ giving rise to
a contribution $(\varphi(n)/2)^2,$ respectively 
$-\varphi(n)/2+J_2(n)/12,$ which on adding and using
that $\varphi(n)\Psi(n)=J_2(n)$ gives the result.

We present a new proof of Theorem \ref{thm:kth-deriv:1} 
that consists in determining the values $(\log \Phi_n)^{(k)}(1)$, see Theorem \ref{thm:kth-log-deriv:1}, and employing Fa\`a di Bruno's formula to relate $\Phi_n^{(k)}(1)/\Phi_n(1)$ to the 
$k^{th}$ Bell polynomial, see Theorem \ref{thm:phi:1}. 
We use these findings to derive a novel formula for the $k^{th}$ coefficient of any cyclotomic polynomial (Theorem \ref{thm:phi:coeffs}). The simplest known formula for these coefficients was given by M\"oller in 1971 \cite{HerbertM}. He expressed $a_n(k)$ as a polynomial evaluated in $\mu(n), \mu(n/2), \ldots, \mu(n/k)$, where $\mu$ is the M\"obius function 
with the stipulation that $\mu(t)=0$ if $t$ is not an integer.

\begin{Thm} \label{thm:moller:intro}
  For any non-negative integers $n$ and $k$ with $n \ge 1$ we have
  \begin{equation*}
    a_n(k) = \sum \prod_{j=1}^k (-1)^{\lambda_j}\binom{\mu(n / j)}{\lambda_j},
  \end{equation*}
  where the sum is over all the non-negative integers $\lambda_1, \ldots, \lambda_k$ such that $\lambda_1 + 2 \lambda_2 + \cdots k \lambda_k = k$.
\end{Thm}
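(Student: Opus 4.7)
My plan is to work directly from the M\"obius-inverted factorisation
\[\Phi_n(x) = \prod_{d\mid n}(1-x^d)^{\mu(n/d)},\]
which follows from $x^n-1 = \prod_{d\mid n}\Phi_d(x)$ by taking logarithms and applying M\"obius inversion (valid for $n\ge 2$; the case $n=1$ can be checked by hand, modulo the usual sign ambiguity in the definition of $\Phi_1$). This factorisation already places the values $\mu(n/d)$ in the combinatorial role they play in the claimed formula.

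The central observation is that computing $a_n(k)$ only requires knowledge of $\Phi_n(x)$ modulo $x^{k+1}$. For any $d > k$, one has $(1-x^d)^{\mu(n/d)}\equiv 1\pmod{x^{k+1}}$, since $\lvert\mu(n/d)\rvert \le 1$ and the binomial expansion of this factor carries no non-trivial monomial of degree $\le k$. Adopting the convention $\mu(n/d)=0$ whenever $d\nmid n$ (which makes such factors equal to $1$), we may therefore restrict the product to indices $d\in\{1,\dots,k\}$ and write
\[\Phi_n(x) \equiv \prod_{d=1}^{k}(1-x^d)^{\mu(n/d)} \pmod{x^{k+1}}.\]

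The final step is to expand each factor by the binomial series
\[(1-x^d)^{\mu(n/d)} = \sum_{\lambda_d\ge 0}(-1)^{\lambda_d}\binom{\mu(n/d)}{\lambda_d}x^{d\lambda_d},\]
multiply the $k$ resulting series together, and extract the coefficient of $x^k$. The tuples $(\lambda_1,\dots,\lambda_k)$ that contribute are precisely those satisfying $\lambda_1+2\lambda_2+\cdots+k\lambda_k = k$, and this produces exactly the expression asserted in Theorem \ref{thm:moller:intro}.

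I do not expect a genuine obstacle here: once the truncation is in place, the proof reduces to a careful multiplication of a finite product of binomial series. The only points requiring a moment of attention are the justification of the truncation at $d\le k$, which hinges on the bound $\lvert\mu(n/d)\rvert \le 1$, and the boundary case $n=1$, where the identity $\Phi_n = \prod_{d\mid n}(1-x^d)^{\mu(n/d)}$ uses the cancellation $\sum_{d\mid n}\mu(n/d)=0$ available only for $n\ge 2$.
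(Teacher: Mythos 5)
Your proposal is correct and follows essentially the same route as the paper: both start from $\Phi_n(x)=\prod_{d\mid n}(1-x^d)^{\mu(n/d)}$ (with $\mu(n/d)=0$ when $d\nmid n$), expand each factor as a binomial series with coefficients $(-1)^{\lambda}\binom{\mu(n/d)}{\lambda}$, and read off the coefficient of $x^k$ as a sum over partitions; the paper merely writes the expansion out explicitly using $\mu\in\{-1,0,1\}$ where you keep the binomial coefficients symbolic. Your remarks on truncating to $d\le k$ and on the $n=1$ sign issue are consistent with the paper, which likewise restricts its argument to $n>1$.
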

A  very short reproof of 
Theorem \ref{thm:moller:intro} was given in \cite{GMH} and is 
recapitulated in 
Section \ref{sec:coefficientformulas}.

During our literature study for this paper we realized that various results claimed as new
at the time, were actually old and in 
Lehmer's and Nicol's papers \cite{Lehmer, Nicol} (although sometimes in round about 
and long form). We urge the reader to have a look at these papers.
One is reminded of a bon mot of Paul Erd\H{o}s: everybody writes and nobody reads.

In the second part of the paper 
comprising Sections \ref{sec:kronecker} and \ref{sec:ns}, we give an application of 
the previously studied results involving 
numerical semigroups (introduced in Section \ref{sec:pre:ns}) and 
Kronecker polynomials. A monic polynomial $f \in \mathbb{Z}[x]$ is said to be a \emph{Kronecker polynomial} if all of its roots are in the closed unit disc. 
Such a polynomial turns out to have a unique factorization in terms
of cyclotomic polynomials and a monomial, a result
due to Kronecker.
In Section \ref{sec:kronecker} we exploit the 
information gathered about the logarithmic derivatives of cyclotomic polynomials at $\pm1$ to obtain a family of identities involving the logarithmic derivatives of Kronecker polynomials at $\pm1$ and Stirling numbers of the second kind $\stirlingtwo{k}{j}$ (see Theorem \ref{thm:log-kronecker:k} for the complete result). 
\begin{Thm} \label{thm:log-kronecker:k:intro}
  Let $f(x) = x^{e_0} \prod_{d \in \mathcal{D}} \Phi_d(x)^{e_d}$ be a Kronecker polynomial. If $f(1) \ne 0$, then, for each integer $k$ with $k \ge 2$ we have
  \[ \sum_{j = 1}^k \stirlingtwo{k}{j} (\log f)^{(j)}(1) = \frac{B_{k}^+}{k} \sum_{d \in \mathcal{D}} e_d J_k(d). \]
\end{Thm}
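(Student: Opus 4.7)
The plan is to recognise the left-hand side as the value at $x=1$ of the operator $(x\frac{d}{dx})^{k}$ applied to $\log f$. This rests on the classical Stirling identity
\[
\left(x\frac{d}{dx}\right)^{k} = \sum_{j=1}^{k}\stirlingtwo{k}{j}\,x^{j}\frac{d^{j}}{dx^{j}}\qquad(k\ge 1),
\]
which expresses the $k$-th iterate of the Euler operator in terms of ordinary derivatives. Evaluating at $x=1$ shows that the left-hand side of the theorem equals $\frac{d^{k}}{dt^{k}}\log f(e^{t})\big|_{t=0}$. This change of variables is what makes the Bernoulli numbers appear naturally on the right.

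Next I would split $f$ according to its Kronecker factorisation. Since $f(1)\ne 0$, necessarily $1\notin\mathcal D$ and each $\Phi_{d}$ with $d\in\mathcal D$ satisfies $\Phi_{d}(1)\ne 0$, so $\log f(e^{t}) = e_{0}t+\sum_{d\in\mathcal D}e_{d}\log \Phi_{d}(e^{t})$ is analytic near $t=0$. The monomial term $e_{0}t$ contributes only at $k=1$ and is killed by $\frac{d^{k}}{dt^{k}}$ for $k\ge 2$; equivalently, $(x\frac{d}{dx})^{k}\log x=0$ for $k\ge 2$, i.e.\ $\sum_{j=1}^{k}(-1)^{j-1}(j-1)!\stirlingtwo{k}{j}=0$. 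It therefore suffices to establish, for every $n\ge 2$ and $k\ge 1$, the single‐cyclotomic identity
\[
\frac{d^{k}}{dt^{k}}\log \Phi_{n}(e^{t})\bigg|_{t=0}=\frac{B_{k}^{+}J_{k}(n)}{k}.
\]

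To prove this I would use the Möbius factorisation $\Phi_{n}(x)=\prod_{d\mid n}(x^{d}-1)^{\mu(n/d)}$ together with the decomposition $\log(e^{dt}-1)=\log(dt)+\log\bigl(\tfrac{e^{dt}-1}{dt}\bigr)$. Since $\sum_{d\mid n}\mu(n/d)=0$ for $n\ge 2$, the singular term $\log(dt)$ collapses to the constant $\Lambda(n)$ and contributes nothing to derivatives of positive order. The regular part is handled by the standard generating-function identity
\[
\log\!\left(\frac{e^{u}-1}{u}\right)=\sum_{m=1}^{\infty}\frac{B_{m}^{+}}{m\cdot m!}\,u^{m},
\]
derived from $u/(e^{u}-1)=\sum B_{m}^{-}u^{m}/m!$. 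Substituting $u=dt$, summing over $d\mid n$ weighted by $\mu(n/d)$, and using $\sum_{d\mid n}\mu(n/d)d^{m}=J_{m}(n)$ yields
\[
\log \Phi_{n}(e^{t})=\Lambda(n)+\sum_{m=1}^{\infty}\frac{B_{m}^{+}J_{m}(n)}{m\cdot m!}\,t^{m},
\]
and reading off the $k$-th coefficient (and multiplying by $k!$) gives the required value. The step most likely to cause friction is the correct bookkeeping of the Bernoulli convention $B_{m}^{+}$ versus $B_{m}^{-}$ in the expansion above; alternatively one can bypass the computation entirely by quoting Theorem \ref{thm:kth-log-deriv:1}, which already records $(\log \Phi_{n})^{(j)}(1)$ in closed form, and then substituting into the Stirling identity term by term.
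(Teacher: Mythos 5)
Your proof is correct, and it takes a genuinely different route from the paper's. The paper first establishes $(\log \Phi_n)^{(k)}(1)=\sum_{j=1}^{k}\frac{B_j^+s(k,j)}{j}J_j(n)$ (Theorem \ref{thm:kth-log-deriv:1}, which itself rests on Lehmer's/Gessel's expansion of $\log p_n$ in powers of $x-1$ via the first-kind generating function \eqref{eq:stirling1:gf:partial}), sums this over the factorization of $f$, and then inverts the resulting triangular system $Cu=v$ using the orthogonality $SC=I_n$ of the two Stirling matrices. You instead observe that $\sum_{j=1}^{k}\stirlingtwo{k}{j}g^{(j)}(1)=\frac{d^k}{dt^k}g(e^t)\big|_{t=0}$ via the Euler-operator identity $(x\frac{d}{dx})^k=\sum_j\stirlingtwo{k}{j}x^jD^j$, and then read off the Taylor coefficients of $\log\Phi_n(e^t)=\Lambda(n)+\sum_{m\ge1}\frac{B_m^+J_m(n)}{m\cdot m!}t^m$ directly from the M\"obius product and the Bernoulli generating function; your expansion of $\log\frac{e^u-1}{u}$ and the sign conventions check out against \eqref{eq:bernoulli-pol:gen}, and your disposal of the monomial factor for $k\ge2$ is exactly the identity $\sum_{j}\stirlingtwo{k}{j}s(j,1)=\delta_{k,1}$ that the paper also invokes. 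In effect you work in the variable $t=\log x$ throughout, which eliminates both the conversion to $(x-1)$-powers and the subsequent matrix inversion, and it explains conceptually why second-kind Stirling numbers appear (they are the change of basis from $d/dx$ to $d/dt$). What the paper's longer route buys is the first-kind companion formula of Theorem \ref{thm:kth-log-deriv:1} itself, which is needed independently for Theorems \ref{thm:kth-deriv:1}, \ref{thm:phi:1} and the analogues at $-1$; your argument proves the stated theorem (and Corollary \ref{cor:log-kronecker:k:cyclo}\,\ref{item:log-cyclo}) more directly but does not recover that intermediate result.
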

As $B_{k}^+=0$ for $k\ge 3$ and odd, the logarithmic derivatives of Kronecker polynomials satisfy an infinite number of homogeneous linear equations. These identities can be applied to show that all members of certain infinite families of polynomials are non-Kronecker polynomials and this was our main motivation to initiate the research presented in this paper. In Section \ref{sec:ns} we present an example of such an application. We show that $1 - x + x^k - x^{2k-1} + x^{2k}$ is not Kronecker for every $k\ge 4$ 
(this was proven independently using a different method by Sawhney and Stoner \cite{conjecture-cyclotomic}).
This leads to the following result. 

\begin{Thm} \label{thm:ciolan} ~
  \begin{enumerate}
    \item \label{item:ciolan:e} For every integer $k \ge 4$ there is a symmetric numerical semigroup with embedding dimension $k$ that is not cyclotomic.
    \item \label{item:ciolan:f} For every odd integer $F \ge 9$ there is a symmetric numerical semigroup with Frobenius number $F$ that is not cyclotomic.
  \end{enumerate}
\end{Thm}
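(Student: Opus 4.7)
The plan is to realise, for each integer $m\ge 4$, the polynomial $1-x+x^m-x^{2m-1}+x^{2m}$ as the semigroup polynomial of an explicit symmetric numerical semigroup and then transfer the non-Kronecker property into non-cyclotomicity. Recall from Section~\ref{sec:pre:ns} that to any numerical semigroup $S$ one associates $P_S(x)=1+(x-1)\sum_{g\in H(S)}x^g$, where $H(S)$ denotes the set of gaps, and that $S$ is called \emph{cyclotomic} precisely when $P_S$ factors as a product of cyclotomic polynomials; since $P_S(0)=1$, this coincides with $P_S$ being a Kronecker polynomial.

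For each $m\ge 4$ I would take
\[ S_m\;:=\;\{0\}\,\cup\,\{m,m+1,\dots,2m-2\}\,\cup\,\{n\in\mathbb{Z}:n\ge 2m\}. \]
A short check confirms that $S_m$ is a numerical semigroup (every sum of two nonzero elements lies in $\{n:n\ge 2m\}\subseteq S_m$), with gap set $H(S_m)=\{1,\dots,m-1,\,2m-1\}$, Frobenius number $2m-1$, and genus $m$. Symmetry follows because the involution $z\mapsto (2m-1)-z$ interchanges $S_m\cap[0,2m-1]$ with $H(S_m)$. Since no element of $\{m,m+1,\dots,2m-2\}$ is a sum of two nonzero elements of $S_m$ (such a sum would be $\ge 2m$), the embedding dimension equals $m-1$. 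A telescoping computation gives
\[ (x-1)\bigl(x+x^2+\cdots+x^{m-1}+x^{2m-1}\bigr)\;=\;-x+x^m-x^{2m-1}+x^{2m}, \]
so $P_{S_m}(x)=1-x+x^m-x^{2m-1}+x^{2m}$. The non-Kronecker assertion for this family (which will be proved in Section~\ref{sec:ns} by applying Theorem~\ref{thm:log-kronecker:k:intro} at $x=1$ with an odd $k\ge 3$, exploiting $B_k^+=0$ together with $P_{S_m}(1)=1\ne 0$ and computing the first few logarithmic derivatives by hand) then forces $S_m$ to be non-cyclotomic.

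Both parts of the theorem now follow by reparametrising. For part~(a), given $k\ge 4$, the semigroup $S_{k+1}$ has embedding dimension $k$ and Frobenius number $2k+1$, and is non-cyclotomic. For part~(b), given an odd integer $F\ge 9$, the semigroup $S_{(F+1)/2}$ (well-defined since $(F+1)/2\ge 5\ge 4$) has Frobenius number $F$ and is non-cyclotomic. I expect the real obstacle to sit in Section~\ref{sec:ns}, namely verifying the failure of the homogeneous Stirling-number identity for $P_{S_m}$ at some odd $k$; the steps sketched above are essentially book-keeping with gap sets and minimal generators.
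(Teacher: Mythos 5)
Your construction and reduction are exactly the paper's: the semigroup you call $S_m$ is the paper's $S_m$ from Section \ref{sec:ns}, your computations of its gap set, genus, symmetry, embedding dimension $m-1$ and semigroup polynomial $f_m(x)=1-x+x^m-x^{2m-1}+x^{2m}$ are all correct, and the reparametrisations deducing both parts of the theorem from ``$f_m$ is not Kronecker for $m\ge 5$'' (Theorem \ref{thm:pedro-family}) are also the paper's. Note that the threshold $m\ge 5$ is sharp: $f_4=\Phi_{10}\Phi_{12}$, so $S_4$ \emph{is} cyclotomic, and your reparametrisation only works because $k\ge 4$ forces $m=k+1\ge 5$ and odd $F\ge 9$ forces $m=(F+1)/2\ge 5$.

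The genuine gap is in the step you defer and, more importantly, in the mechanism you propose for it. You expect to prove non-Kroneckerness by ``applying Theorem \ref{thm:log-kronecker:k:intro} at $x=1$ with an odd $k\ge 3$, exploiting $B_k^+=0$.'' That route fails: the paper explicitly remarks that the polynomials $f_k$ \emph{satisfy} all the homogeneous equations \eqref{eq:log-kronecker:small}, so the odd-order identities detect nothing here. The working argument uses the \emph{even} case $k=2$ of Theorem \ref{thm:log-kronecker:k}, where $B_2^+\ne 0$ gives the exact value $\sum_{d\in\mathcal{D}_k}e_d\varphi(d)\Psi(d)=48k-24$; but even then the generic lower bound of Corollary \ref{cor:log-kronecker:bound}, namely $12\deg f_k=24k$, is consistent with $48k-24$, so no contradiction arises without more input. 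One must first exclude the cyclotomic divisors $\Phi_{p^j}$, $\Phi_{2p^j}$ ($p\ne 3,5$) and $\Phi_{3p^j}$ by evaluating $f_k$ at roots of unity (Lemma \ref{lem:fk:14-15} via Corollary \ref{reformulation}), control the possible occurrences of $\Phi_6$, $\Phi_{10}$, $\Phi_{12}$ (Corollary \ref{cor:60cases}), and then run a case analysis with the sharpened constant $\mu_{\mathcal{C}}(2)$ of Remark \ref{rem:log-kronecker:bound} (which jumps from $12$ to $24$ once $6$ and $10$ are excluded) to force $48k\le 48k-24$. This is the substantive content of the proof, not book-keeping, and as written your proposal neither supplies it nor points at a strategy that would.
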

Part \ref{item:ciolan:e} establishes the truth of 
a conjecture of Ciolan et al.\,\cite[Conjecture 2]{CGM}.
Pedro A. Garc\'ia-S\'anchez has also found a proof of 
part \ref{item:ciolan:f}. His proof is given in an appendix 
to this paper.

\section{Preliminaries}
\label{prelim}
Here we recall the properties of the relevant number theoretical
notions that are needed for the rest of the paper. References in the subsection headers
give suggestions for further reading.

\subsection{Arithmetic functions \cite{Apostolbook, ScSp, Siva}} \label{sec:arfu}

An \emph{arithmetic function} $f$ maps the positive integers to the complex numbers. 
An important subclass consists of the \emph{multiplicative arithmetic functions};
these have the property that for any two coprime positive integers $a$ and $b$, we have $f(ab)=f(a)f(b)$. 
Famous examples are the Euler totient function, the Dedekind psi function 
and the von Mangoldt function. These are defined, 
respectively, by $\varphi(n)=n\prod_{p \mid n}(1-1/p)$, $\Psi(n)= n \prod_{p \mid n} \left(1 + 1/p\right)$ and
\[ \Lambda(n) = \begin{cases} \log p & \text{if } n = p^k \text{ for some prime } p 
\text{ and integer } k\ge 1; \\ 0 & \text{otherwise.} \end{cases}\]
Another important arithmetic function is the M\"obius function, which is a 
multiplicative and given by
\[\mu(n) = \begin{cases} (-1)^r & \text{if } n \text{ is the product of } r \text{ different primes};  \\ 0 & \text{otherwise}. \end{cases}\]

Let $f$ and $g$ be two arithmetic functions. The \emph{Dirichlet convolution} of $f$ and $g$ is the function $f \star g$ defined as $(f \star g)(n) = \sum_{d \mid n} f(d) g(n / d)$ for every positive integer $n$. The arithmetic functions form a group under the Dirichlet convolution. The neutral element of the group is the function $I(n)$, which equals $1$ if $n$ is $1$ and $0$ otherwise. It turns out that
\begin{equation*} \label{eq:mu}
  \sum_{d \mid n} \mu(d) = I(n).
\end{equation*}
As the Dirichlet convolution of two multiplicative functions is itself multiplicative, the
multiplicative functions are a subgroup of the arithmetic functions.

\subsubsection{The Jordan totient function} \label{gejo}
Jordan totient functions form an important family of multiplicative arithmetic functions.
These functions are a generalization of the Euler totient function \cite{Siva} and they naturally emerge when computing the value $(\log \Phi_n)^{(k)}(1)$.

Let $k\ge 1$ be an integer. 
The $k^{th}$ Jordan totient
function $J_k(n)$ is the number of 
$k$-tuples chosen
from a complete residue system modulo $n$ such that the greatest common divisor
of each set is coprime to $n$. 
It is not difficult to show, 
cf. \cite[p. 91]{Siva}, that
\begin{equation} \label{eq:jordaan}
  n^k=\sum_{d \mid n}J_k(d),
\end{equation}
which by M\"obius inversion yields
\[ J_k(n)=\sum_{d \mid n}\mu\left(\frac{n}{d}\right)d^k. \]
As $J_k$ is a Dirichlet convolution of multiplicative functions, it is itself multiplicative 
and one obtains
\[ J_k(n)=n^k\prod_{p \mid n}\Big(1-\frac{1}{p^k}\Big). \]
Note that $J_1=\varphi$ and that $J_2=J_1\Psi=\varphi \Psi.$ 

Let $r\ge 1$ be an integer and 
${\bar e}=(e_1,\ldots,e_r)$ be a vector with integer
entries. Put $w=\sum_i ie_i$. 
An arithmetic function $J_{\bar e}$ of
the form
$J_{\bar e}= {\prod_{i=1}^{r}J_{i}^{e_i}}$
is said to be a \emph{Jordan totient quotient} of a weight $w$, e.g. 
$J_{(-1,1)}=\Psi$ is of weight one. If 
$w=0$, then we say that $J_{\bar e}$ is a 
\emph{balanced Jordan totient quotient}.
If all integer entries of ${\bar e}$ are non-negative, 
then we say that $J_{\bar e}$ is a \emph{Jordan 
totient product}.

\subsection{Cyclotomic polynomials \cite{Siva, Thanga}}

In this section we recall some relevant properties of cyclotomic polynomials. For proofs see, for instance, 
Thangadurai \cite{Thanga}. We also indicate how to compute the values $\Phi_n'(\pm1)$.

A definition of the $n^{th}$ cyclotomic polynomial is
\begin{equation}
\label{definitie}
\Phi_n(x)=\prod_{1\le j\le n,~(j,n)=1}(x-\zeta_n^j) \in \mathbb C[x],
\end{equation}
where $\zeta_n=e^{2 \pi i / n}$. It is a polynomial of degree $\varphi(n)$ and
actually satisfies $\Phi_n(x)\in \mathbb Z[x]$. Moreover, 
$\Phi_n(x)$ is irreducible over the rationals, cf. 
\cite{Wein}, and
in $\mathbb Q[x]$ we have the factorization into irreducibles
\begin{equation}
\label{facintoirr}
x^n-1=\prod_{d \mid n}\Phi_d(x),
\end{equation}
which by M\"obius inversion yields
$\Phi_n(x)=\prod_{d \, \mid n}(x^d-1)^{\mu(n/d)}$.
Since $\sum_{d \, \mid n}\mu(n/d)=0$ for $n>1$, we can rewrite this as
\begin{equation}
\label{phimoebius}
\Phi_n(x)=\prod_{d \, \mid n} (1-x^d)^{\mu(n/d)}\qquad (n>1).
\end{equation}
Degree comparison in \eqref{facintoirr} leads
to $n=\sum_{d\mid n}\varphi(d)$, which is  identity \eqref{eq:jordaan} with $k=1$.
\begin{lem} \label{basiceqs}
Let $p$ be a prime and $n$ a positive integer. We have
\begin{enumerate}
\item $\Phi_{pn}(x)=\Phi_n(x^p){\rm ~if~}p\mid n$;
\item $\Phi_{pn}(x)=\Phi_n(x^p)/\Phi_n(x){\rm ~if~}p\nmid n$;
\item $\Phi_{2n}(x)= (-1)^{\varphi(n)} \Phi_n(-x){\rm ~if~}2\nmid n$;
\item $\Phi_n(-x)=\Phi_n(x)$ if $4 \mid n$;
\item $\Phi_n(x)=x^{\varphi(n)}\Phi_n(1/x)$, that is, $\Phi_n$ is self-reciprocal if $n > 1$.
\end{enumerate}
\end{lem}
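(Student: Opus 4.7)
The lemma collects classical identities, so my plan is to dispose of each part by choosing the description of $\Phi_n$ that makes the bookkeeping shortest: the Möbius formula \eqref{phimoebius} for parts (a) and (b), and the root-product definition \eqref{definitie} for parts (c)--(e).

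For (a) and (b) I would start with $\Phi_n(x^p) = \prod_{d \mid n}(x^{pd}-1)^{\mu(n/d)}$ and reindex via $d' = pd$, giving
\[
\Phi_n(x^p) = \prod_{\substack{d' \mid pn \\ p \mid d'}} (x^{d'}-1)^{\mu(pn/d')}.
\]
Applying \eqref{phimoebius} to $\Phi_{pn}$ yields the analogous product over \emph{all} divisors $d'$ of $pn$, so the two differ only by the factor indexed by $d' \mid pn$ with $p \nmid d'$. When $p \mid n$, every such $d'$ satisfies $p^2 \mid pn/d'$, so the corresponding $\mu(pn/d')$ vanish and (a) follows. When $p \nmid n$, the surviving divisors are exactly the divisors of $n$, and $\mu(pn/d') = -\mu(n/d')$ on them, so the missing factor is $\Phi_n(x)^{-1}$, yielding (b).

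For (c) and (d) I would exploit that $\zeta \mapsto -\zeta$ is a bijection between two sets of primitive roots of unity, pinned down by a short order computation. When $n$ is odd, $-\zeta$ has order $2n$ whenever $\zeta$ has order $n$, so the map carries primitive $n$-th roots bijectively onto primitive $(2n)$-th roots; substituting $-x$ into the root-product form of $\Phi_n$ and tracking the sign $(-1)^{\varphi(n)}$ that comes out of the $\varphi(n)$ sign flips gives (c). When $4 \mid n$ an analogous order argument shows that $-\zeta$ is again a primitive $n$-th root, and since $\varphi(n)$ is even in this range the $(-1)^{\varphi(n)}$ becomes $+1$, giving (d). For (e), applying $\zeta \mapsto \zeta^{-1}$ (another bijection on primitive $n$-th roots) produces
\[
x^{\varphi(n)}\Phi_n(1/x) = \prod_\zeta (1 - x\zeta) = (-1)^{\varphi(n)} \Bigl(\prod_\zeta \zeta\Bigr) \Phi_n(x),
\]
and the classical evaluation $\sum_{1 \le j \le n,\,(j,n)=1} j = n\varphi(n)/2$ for $n > 1$ forces $\prod_\zeta \zeta = (-1)^{\varphi(n)}$, cancelling the leading sign.

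The only mild obstacle throughout is keeping the signs $(-1)^{\varphi(n)}$ consistent and checking a handful of small-$n$ boundary cases (e.g. $n=1$ in (c), where $\varphi(1)=1$ provides the right sign), which amounts to a routine parity check rather than a genuine difficulty.
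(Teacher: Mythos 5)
Your proof is correct. Note that the paper itself gives no proof of this lemma --- it simply cites Thangadurai for these classical facts --- so there is nothing to compare against; your arguments (M\"obius-product manipulation for (a)--(b), bijections $\zeta\mapsto-\zeta$ and $\zeta\mapsto\zeta^{-1}$ on primitive roots for (c)--(e), with $\prod_\zeta\zeta=(-1)^{\varphi(n)}$ closing part (e)) are the standard ones and all the sign and divisibility checks go through as you describe.
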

The evaluation of $\Phi_n(1)$ is a classical result.
\begin{lem} 
\label{valueat1A}
We have $\Phi_1(1)=0$ and for $n>1$ we have $\Phi_n(1) = e^{\Lambda(n)}$.
\end{lem}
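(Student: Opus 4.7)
The case $n=1$ is immediate since $\Phi_1(x)=x-1$. For $n>1$ I would exploit either the defining factorization \eqref{facintoirr} or its M\"obius-inverted form \eqref{phimoebius}; both routes work, and I would probably present the first for transparency and the second for elegance.

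\emph{First approach (induction via the factorization).} Dividing $x^n-1=\prod_{d\mid n}\Phi_d(x)$ by $\Phi_1(x)=x-1$ and evaluating at $x=1$ yields the identity
\[
 n=\prod_{\substack{d\mid n\\ d>1}}\Phi_d(1).
\]
I would then run strong induction on $n$. The base case $n=p$ is immediate from $\Phi_p(x)=1+x+\cdots+x^{p-1}$, so $\Phi_p(1)=p$. For $n=p^k$ with $k\ge 2$, the induction hypothesis gives $\Phi_{p^j}(1)=p$ for every $1\le j<k$, and the displayed product formula then forces $\Phi_{p^k}(1)=p$. For $n$ having at least two distinct prime factors, $n$ itself is not a prime power, so every prime-power divisor of $n$ is a strict divisor; by induction those contribute $\prod_{p\mid n} p^{v_p(n)}=n$ to the product, while all non-prime-power strict divisors contribute $1$. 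Hence $\Phi_n(1)=1$. Matching the two cases against the definition of $\Lambda$ gives $\Phi_n(1)=e^{\Lambda(n)}$.

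\emph{Second approach (M\"obius manipulation).} Starting from \eqref{phimoebius} and writing $1-x^d=(1-x)(1+x+\cdots+x^{d-1})$, the $(1-x)$ factors assemble to $(1-x)^{\sum_{d\mid n}\mu(n/d)}=(1-x)^0=1$ because $n>1$. Passing to the limit $x\to 1$ then yields $\Phi_n(1)=\prod_{d\mid n}d^{\mu(n/d)}$, and taking logarithms this is $\exp\!\bigl(\sum_{d\mid n}\mu(n/d)\log d\bigr)=e^{\Lambda(n)}$ by the standard identity $\Lambda=\mu\star\log$ (M\"obius inversion of $\log n=\sum_{d\mid n}\Lambda(d)$).

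\emph{Main obstacle.} There is no serious obstacle in either argument; the only point requiring care in the inductive proof is the case split, namely verifying that when $n$ has at least two distinct prime factors every prime-power divisor of $n$ is strict, so the induction hypothesis applies to each of them. In the second approach the only nontrivial input is $\Lambda=\mu\star\log$, which is classical and already implicit in the arithmetic-function preliminaries.
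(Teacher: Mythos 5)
Your proposal is correct; in fact both of your arguments are complete and standard. Note that the paper itself offers no proof of this lemma --- it labels the evaluation of $\Phi_n(1)$ a classical result and refers the reader to \cite{BHM} --- so there is nothing internal to compare against. Of your two routes, the second (passing from \eqref{phimoebius} to $\Phi_n(1)=\prod_{d\mid n}d^{\mu(n/d)}$ and invoking $\Lambda=\mu\star\log$) is the one most in the spirit of the paper, which repeatedly exploits \eqref{phimoebius} and the vanishing of $\sum_{d\mid n}\mu(n/d)$ for $n>1$ in exactly this way (compare the derivation of \eqref{eq:kth-deriv:phi}); the first route via \eqref{facintoirr} and strong induction is more elementary but requires the careful case split you correctly identify.
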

The value $\Phi_n(-1)$ 
is easily determined on using 
Lemma \ref{basiceqs} once one has calculated 
$\Phi_n(1)$. 
\begin{lem} 
\label{valueat-1}
We have $\Phi_1(-1)=-2$, $\Phi_2(-1)=0$ and for $n>2$ we have
\begin{equation*}
\Phi_n(-1)=\begin{cases}
p & \hbox{if } n=2p^e ;\\
1 & \hbox{otherwise},\end{cases}
\end{equation*}
with $p$ a prime number and $e\ge 1$.
\end{lem}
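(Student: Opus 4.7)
The plan is to verify the three small cases by direct inspection and then to reduce the general case $n>2$ to the evaluation of $\Phi_m(1)$ for appropriate $m$, which is controlled by Lemma \ref{valueat1A}. Specifically, $\Phi_1(x)=x-1$ gives $\Phi_1(-1)=-2$ and $\Phi_2(x)=x+1$ gives $\Phi_2(-1)=0$, so I set these two aside and assume $n>2$ throughout.

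For $n>2$ I would split into three cases according to $n\bmod 4$.  If $n$ is odd (and $>1$), applying part (3) of Lemma \ref{basiceqs} with $n$ in place of $n$ yields $\Phi_{2n}(x)=(-1)^{\varphi(n)}\Phi_n(-x)$; evaluating at $x=1$ and noting that $\varphi(n)$ is even for odd $n>1$ gives $\Phi_n(-1)=\Phi_{2n}(1)=e^{\Lambda(2n)}$ by Lemma \ref{valueat1A}, and since $2n$ has at least two distinct prime factors we get $\Lambda(2n)=0$ and thus $\Phi_n(-1)=1$.  If $4\mid n$, part (4) gives $\Phi_n(-1)=\Phi_n(1)=e^{\Lambda(n)}$; this equals $2$ exactly when $n$ is a power of $2$, i.e.\ when $n=2p^{e}$ with $p=2,\,e\ge 1$, and equals $1$ otherwise.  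If $n\equiv 2\pmod 4$, write $n=2m$ with $m$ odd and $m>1$ (the case $m=1$ being $n=2$, already excluded); part (3) yields $\Phi_n(-1)=(-1)^{\varphi(m)}\Phi_m(1)=\Phi_m(1)=e^{\Lambda(m)}$, which is $p$ when $m=p^e$ for an odd prime $p$ (so $n=2p^e$) and $1$ otherwise.

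Collecting the three cases, $\Phi_n(-1)=p$ precisely when $n=2p^{e}$ for some prime $p$ (odd $p$ coming from the third case, $p=2$ from the second) and $\Phi_n(-1)=1$ in all other cases with $n>2$, which is exactly the claim.

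There is no real obstacle here: the argument is a mechanical case split that leverages Lemmas \ref{basiceqs} and \ref{valueat1A}. The only point requiring a moment of care is observing that $\varphi(m)$ is even whenever $m>2$, which makes the sign factor $(-1)^{\varphi(m)}$ in part (3) disappear in both odd subcases.
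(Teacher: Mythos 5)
Your proof is correct and follows exactly the route the paper indicates: the paper gives no written proof, remarking only that $\Phi_n(-1)$ ``is easily determined on using Lemma \ref{basiceqs} once one has calculated $\Phi_n(1)$'' (deferring details to \cite{BHM}), and your case split on $n \bmod 4$ using parts (3) and (4) of Lemma \ref{basiceqs} together with Lemma \ref{valueat1A} is precisely that argument, carried out carefully (including the needed observation that $\varphi(m)$ is even for $m>2$).
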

\noindent For more details regarding the latter two lemmas, the reader
is referred to, e.g., \cite{BHM}.

For self-reciprocal polynomials one can determine the first derivative at $\pm 1$ in most cases. The logarithmic derivative of $\Phi_n$ at $\pm 1$ is obtained as a consequence.

\begin{prop}[{\cite[Lemma 9]{BHM}}] \label{self-reciprocal:deriv}
  Let $f$ be a polynomial of degree $d\ge 1$.\\ 
  Suppose that $f$ is self-reciprocal.
  \begin{enumerate}[label=\alph*)]
    \item We have $f'(1) = f(1) d / 2$;
    \item if $2\nmid d$, then $f(-1)=0$. If $2\mid d$, then $f'(-1)=-f(-1)d/2$.
  \end{enumerate}
\end{prop}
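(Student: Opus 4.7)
The plan is to prove both parts by directly exploiting the functional equation
\[ f(x) = x^d f(1/x) \]
that encodes the self-reciprocal hypothesis. This single identity is essentially all one needs; the argument splits into differentiating it once and then evaluating at the two points $x = \pm 1$, keeping track of the parity of $d$.

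For part (a), I would apply $\frac{d}{dx}$ to both sides of the functional equation. By the product and chain rules one gets
\[ f'(x) = d\, x^{d-1} f(1/x) - x^{d-2} f'(1/x). \]
Setting $x = 1$ gives $f'(1) = d f(1) - f'(1)$, which rearranges to $f'(1) = d f(1)/2$. This is the quickest route and avoids any case analysis.

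For part (b), I would first dispose of the odd case without differentiation: substituting $x = -1$ directly into $f(x) = x^d f(1/x)$ yields $f(-1) = (-1)^d f(-1)$, so when $d$ is odd we have $2f(-1) = 0$ and hence $f(-1) = 0$. When $d$ is even, I would substitute $x = -1$ into the already-differentiated identity; since $(-1)^{d-1} = -1$ and $(-1)^{d-2} = 1$ in this case, we obtain $f'(-1) = -d f(-1) - f'(-1)$, which solves to $f'(-1) = -d f(-1)/2$.

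There is no real obstacle here — the work is entirely mechanical once the correct derivative of $f(1/x)$ (namely $-x^{-2} f'(1/x)$) is applied. The only point that requires a modicum of care is the sign bookkeeping at $x = -1$, where one must track $(-1)^{d-1}$ and $(-1)^{d-2}$ separately according to the parity of $d$; getting these signs right is what produces the distinction between the vanishing of $f(-1)$ in the odd-degree case and the analogue of formula (a) in the even-degree case.
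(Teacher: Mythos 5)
Your proof is correct and is the standard argument: differentiate the self-reciprocity identity $f(x)=x^d f(1/x)$ and evaluate at $x=\pm 1$, with the parity of $d$ handled exactly as you do. The paper itself does not reprove this proposition but cites it from \cite{BHM}, where the argument is essentially the same, so there is nothing to add.
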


\begin{cor}
\label{phin1:deriv}
  We have $(\log \Phi_n))^{(1)}(\pm 1)=\pm \varphi(n)/2$ for 
  $n > 2$.
\end{cor}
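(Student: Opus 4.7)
The plan is a direct application of Proposition \ref{self-reciprocal:deriv} to $f = \Phi_n$, using the facts already collected in the preliminaries. By Lemma \ref{basiceqs}(5) the polynomial $\Phi_n$ is self-reciprocal for every $n > 1$, and it has degree $d = \varphi(n)$. For $n > 2$ it is standard that $\varphi(n)$ is even, so the hypothesis $2 \mid d$ in part b) of the proposition is satisfied, and part a) applies unconditionally.

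Next I would verify that the logarithmic derivatives are well defined at $\pm 1$, i.e.\ that $\Phi_n(\pm 1) \ne 0$. For $n > 1$, Lemma \ref{valueat1A} gives $\Phi_n(1) = e^{\Lambda(n)} \ne 0$; and for $n > 2$, Lemma \ref{valueat-1} shows $\Phi_n(-1) \ne 0$ (only $\Phi_2(-1) = 0$ among $n \ge 2$).

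Then I would simply substitute: from Proposition \ref{self-reciprocal:deriv}a) we get
\[
(\log \Phi_n)'(1) \;=\; \frac{\Phi_n'(1)}{\Phi_n(1)} \;=\; \frac{\varphi(n)}{2},
\]
and from part b), since $\varphi(n)$ is even for $n > 2$,
\[
(\log \Phi_n)'(-1) \;=\; \frac{\Phi_n'(-1)}{\Phi_n(-1)} \;=\; -\frac{\varphi(n)}{2}.
\]
This gives the claimed formula $(\log \Phi_n)'(\pm 1) = \pm \varphi(n)/2$.

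There is no real obstacle here: the only subtlety is making sure $\Phi_n(-1) \ne 0$ so that the logarithmic derivative at $-1$ makes sense, which is exactly why the restriction $n > 2$ (rather than $n > 1$) appears in the statement.
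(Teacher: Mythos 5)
Your proof is correct and follows exactly the paper's intended route: the corollary is stated as an immediate consequence of Proposition \ref{self-reciprocal:deriv} applied to the self-reciprocal polynomial $\Phi_n$ of even degree $\varphi(n)$, together with the nonvanishing of $\Phi_n(\pm 1)$ for $n>2$ from Lemmas \ref{valueat1A} and \ref{valueat-1}. Nothing to add.
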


\subsection{Ramanujan sums \cite{Car, Giovanni, ScSp, Siva}}
\label{rama}
Given two positive integers $n$ and $k$,
the Ramanujan sum $r_k(n)$ is defined as the 
(a priori) complex number
\[ r_k(n) = \sum_{\substack{j = 1 \\ (j,n)=1}}^n \zeta_n^{j k}. \]
Trivially $\zeta_n^{k}=\zeta_{n/(n,k)}^{k/(n,k)}$ and the Ramanujan
sum is an element in the ring of integers of $\mathbb{Q}(\zeta_{n/(n,k)})$ 
that is invariant under all Galois automorphisms of that field and thus is
actually an integer.
\par Ramanujan \cite{Ramanujan} used the sums $r_k(n)$ to derive pointwise convergent series representations of
arithmetic functions $g:\mathbb N \to \mathbb C$ of the form
$g(n) =\sum_{k=1}^{\infty}{\hat g}(k)r_k(n)$
with certain coefficients ${\hat g}(k)$. 
A representation of this 
form is called a Ramanujan expansion
of $g$ with Ramanujan coefficients  ${\hat g}(k)$ \cite{Giovanni}.

Some authors before Ramanujan considered 
Ramanujan sums, but only uncovered elementary 
properties, e.g., Kluyver \cite{Kl} 
showed in 1906 that
\begin{equation} \label{eq:holder}
  r_k(n) = \mu\left(\frac{n}{(n,k)}\right) \frac{\varphi(n)}{\varphi(n / (n,k))}
\end{equation}
and 
\begin{equation} \label{eq:kluyver}
  r_k(n) = \sum_{d \mid (n,k)} \mu\left(\frac{n}{d}\right) d.
\end{equation}
The latter formula shows again that $r_k(n)$ is an integer. For proofs of \eqref{eq:holder} 
and \eqref{eq:kluyver} see, e.g., Hardy 
and Wright \cite[Section 16.6]{Hardy}.

\subsection{Bernoulli numbers and polynomials \cite{Arakawa}, \cite[Chapter 9]{Cohen}}
\label{sec:Bernoulli}

Bernoulli numbers can be defined in various ways. 
We will introduce them as 
values of \emph{Bernoulli polynomials}, which are defined by the generating function
\begin{equation} \label{eq:bernoulli-pol:gen}
  \frac{t e^{tx}}{e^t - 1} = \sum_{n = 0}^\infty B_n(x) \frac{t^n}{n!} \qquad (|t| < \pi, x \in \mathbb{R}),
\end{equation}
Note that $B_0(x) = 1$. By multiplying the Taylor series of $t e^{tx} / (e^t - 1)$ and $(e^t-1)/t$ one finds the recursive definition of the Bernoulli polynomials
\begin{equation} \label{eq:bernoulli-pol:re}
B_n(x)= x^n - \sum_{k=0}^{n-1}\binom{n}{k} \frac{B_k(x)}{n-k+1}.
\end{equation}
On comparing  \eqref{eq:bernoulli-pol:gen} with $x$ 
replaced by $1-x$ and \eqref{eq:bernoulli-pol:gen} itself one 
obtains $B_n(1-x) = (-1)^n B_n(x)$. In particular, we have $B_n(0) = (-1)^n B_n(1)$. Moreover, note that
\[\sum_{n = 0}^\infty B_n(1) \frac{t^n}{n!} - \sum_{n = 0}^\infty B_n(1) \frac{(-t)^n}{n!} = \frac{t e^{t}}{e^t - 1} + \frac{t e^{-t}}{e^{-t} - 1} = t\]
and thus we find that $B_n(1) = 0$ for every odd integer $n$ greater than $1$. 

In many sources the $n^{th}$ \emph{Bernoulli number} is defined by evaluating the $n^{th}$ Bernoulli polynomial at $0$, see, for instance, \cite{Apostol}. We must point out that some authors introduce $B_n$ 
instead as the value $B_n(1)$ 
\cite{Arakawa}. These definitions agree for every $n \ne 1$. Moreover, we have $B_1(1) = - B_1(0) = 1/2$. It is convenient to distinguish between both definitions in order to avoid possible inaccuracies. Therefore, we define the sequences $B_n^- = B_n(0)$ and $B_n^+ = B_n(1)$. Indeed, using both definitions at the same time allows us to simplify some equations. For instance, we have 
$\zeta(-n) = (-1)^n B_{n+1}^- / (n+1) 
= - B^+_{n+1} / (n+1)$ for 
every non-negative integer $n$ 
(with $\zeta$ the Riemann zeta
function). Theorem \ref{thm:gessel} below 
provides another example demonstrating the convenience of using both definitions.
We recall that
$\text{sgn}(B_n^{\pm})=(-1)^{ n/2+1}$ for every even positive integer $n$.

From \eqref{eq:bernoulli-pol:re} one sees that the Bernoulli numbers satisfy the following recurrences
\begin{equation*} \label{eq:bernoulli:re}
B_n^-= - \sum_{k=0}^{n-1}\binom{n}{k} \frac{B_k^-}{n-k+1}, \quad
B_n^+= 1 - \sum_{k=0}^{n-1}\binom{n}{k} \frac{B_k^+}{n-k+1},
\end{equation*}
which can be used to compute both sequences.

\subsection{Stirling numbers \cite[Chapter 5]{Comtet}}
\label{sec:pre:stirling}
In our paper we will use the notation and terminology for Stirling numbers
proposed by Knuth \cite{Knuth}.

The \emph{signed Stirling numbers of the first kind} are the coefficients of the polynomial
\begin{equation} \label{eq:stirling1}
x^{\underline{k}} = x(x-1)(x-2) \ldots (x-k+1) = \sum_{j = 0}^k s(k,j) x^j,
\end{equation}
where $k$ is a non-negative integer. The polynomial $x^{\underline{0}}$ is defined as $1$.
By a simple inspection of the definition one obtains the equalities
\begin{align*}
  s(k,k) & = 1 \text{ for every } k \ge 0, \\
  s(k,0) & = 0 \text{ for every } k \ge 1,\\
  s(k,j) & = 0 \text{ for every } j > k \ge 0,\\
  s(k,1) & = (-1)^{k-1}(k-1)! \text{ for every } k \ge 1.
\end{align*}
The sign of $s(k,j)$ is easily determined, yielding $s(k,j) = (-1)^{k-j} |s(k,j)|$. 
It is easy to show that the Stirling numbers of the first kind satisfy the following recurrence 
\begin{equation*} \label{eq:stirling1:re}
s(k,j)=s(k-1,j-1)-(k-1)s(k-1,j) \qquad (k, j \ge 1).
\end{equation*}

Note that $(1+t)^x = \sum_{k = 0}^\infty x^{\underline{k}}\, t^k / k!$ for $|t| < 1$ and, therefore, Stirling numbers of the first kind can also arise 
as coefficients of the generating function
\begin{equation*} \label{eq:stirling1:gf}
  (1 + t)^x = \sum_{k, j = 0}^\infty s(k,j) \frac{t^k}{k!} x^j = \sum_{j = 0}^\infty x^j \sum_{k = j}^\infty s(k,j) \frac{t^k}{k!} \qquad (|t| < 1, x \in \mathbb{R}).
\end{equation*}
Observe that $(1 + t)^x = \sum_{k = 0}^\infty (\log(1+t))^k x^k/k!$ for any $t > -1$ and any real 
number $x.$ It follows that
for each integer $j\ge 0$ we get the generating function
\begin{equation} \label{eq:stirling1:gf:partial}
  \frac{(\log(1+t))^k}{k!} = \sum_{k = j}^\infty s(k,j) \frac{t^k}{k!} \qquad (|t| < 1).
\end{equation}

The \emph{Stirling numbers of the second kind} are the coefficients that arise when one expresses $x^k$ as a linear combination of the falling factorials $x^{\underline{0}}, x^{\underline{1}}, \ldots, x^{\underline{k}}$, that is,
\[x^k = \sum_{j=0}^k \stirlingtwo{k}{j} x^{\underline{j}},\]
where $k$ is a non-negative integer. From the previous definition one obtains
\begin{align*}
  \stirlingtwo{k}{0} & = 0 \text{ for every } k \ge 1, \\
  \stirlingtwo{k}{j} & = 0 \text{ for every } j > k \ge 0,\\
  \stirlingtwo{k}{k} & = 1.
\end{align*}
It is not difficult to establish the recurrence relation
\[\stirlingtwo{k}{j} = \stirlingtwo{k-1}{j-1} + k\stirlingtwo{k-1}{j},\]
valid for every $k$ and $j$. As a consequence, $\stirlingtwo{k}{j}$ is the number of partitions of a set with $k$ objects into $j$ non-empty sets (as these numbers
are easily seen to also satisfy this recurrence and the same boundary conditions). This is the reason why these quantities are sometimes known as Stirling set numbers. From this observation we conclude that $\stirlingtwo{k}{j} \ge 1$, $\stirlingtwo{k}{1} = 1$ and $\stirlingtwo{k}{2} = 2^{k-1}-1$ for 
all integers $k$ and $j$ with $1\le j \le k$. Stirling numbers of the first and second kind are closely related. From \eqref{eq:stirling1} it follows that
\[ x^{\underline{k}} = \sum_{t = 0}^k s(k,t) \sum_{j = 0}^k \stirlingtwo{t}{j} x^{\underline{j}} = \sum_{j = 0}^k  x^{\underline{j}}  \sum_{t = 0}^k s(k,t) \stirlingtwo{t}{j}, \]
or, equivalently,
\begin{equation} \label{eq:stirling:matrix}
    \sum_{t = 1}^k s(k,t) \stirlingtwo{t}{j} = \delta_{k,j}
\end{equation}
for all positive integers $k$ and $j$, where $\delta_{k,j}$ is $1$ when $k = j$ and $0$ otherwise. 

Let $n$ be a positive integer. Let $C$ and $S$ be the $n \times n$ dimensional matrices with entries are 
$C_{kj} = s(k,j),$ respectively $S_{kj} = \stirlingtwo{k}{j}.$ The equalities given in \eqref{eq:stirling:matrix} can be rewritten as 
the matrix identity $C S = I_n$, 
where $I_n$ is the 
$n \times n$ identity matrix. Hence we have $S C = I_n$, that is,
\begin{equation} \label{eq:stirling:matrix:2}
    \sum_{t = 1}^k \stirlingtwo{k}{t} s(t,j) = \delta_{k,j}
\end{equation}
for all positive integers $k$ and $j$.

\subsection{Fa\`a di Bruno's formula and Bell polynomials \cite{faadibruno}}
In order to define Bell polynomials and  
formulate Fa\`a di Bruno's\footnote{Or rather the Blessed Chevalier Fa\`a di Bruno if we 
take the distinctions conferred upon him by both the Roman Catholic Church and the Military into account.} formula, the notion of a partition of 
an integer is needed. A partition of a positive integer $k$ can be identified with a sequence $\{\lambda_j\}_{j=1}^{\infty}$ of  non-negative integers satisfying $\sum_{j}j \lambda_j=k$.  Without loss of generality we can write a partition, $\lambda$, of $k$ as $\lambda=(\lambda_1, \ldots, \lambda_k)$, where $\lambda_j \ge 0$ and $\sum_{j = 1}^k j \lambda_j = k$. The set of all partitions of $k$ will be denoted by ${\mathcal P}(k)$.

For every positive integer $k$ and $1 \le j \le k$ the \emph{partial Bell polynomial} ${\mathcal B}_{k,j}(x)$ is defined as 
\begin{equation*} \label{eq:bell}
  \mathcal{B}_{k,j}(x_{1},x_{2},\dots ,x_{k-j+1})=\sum \frac{k!}{\lambda_{1}!\lambda_{2}!\cdots \lambda_{k-j+1}!} \left(\frac{x_{1}}{1!}\right)^{\lambda_{1}}\left(\frac{x_{2}}{2!}\right)^{\lambda_{2}}\cdots \left(\frac{x_{k-j+1}}{(k-j+1)!}\right)^{\lambda_{k-j+1}},
\end{equation*}
where the sum is taken over all the partitions $(\lambda_1, \lambda_2, \ldots, \lambda_{k-j+1}, 0, \ldots, 0) \in {\mathcal P}(k)$ such that $\lambda_1 + \lambda_2 + \cdots + \lambda_{k-j+1} = j$, see \cite[Chapter 3]{Comtet}. 
We have, e.g., $\mathcal{B}_{4,2}(x_{1},x_{2},x_{3})=4x_1x_3+3x_2^2$.

Partial Bell polynomials can also be defined in terms of set partitions. This approach is used in \cite{faadibruno}, where Fa\`a di Bruno's formula is studied. This formula is a generalization of the chain rule for higher order derivatives. It can be formulated in several ways although in our opinion its simplest statement is in terms of partial Bell polynomials.

\begin{lem}[Fa\`a di Bruno's formula] Let $k$ be a positive integer, $f$ and $g$ be
$k$ times differentiable functions on intervals 
$I$, respectively $J$ in $\mathbb R$. Set $x \in J$ such that $g(x) \in I$. Then 
\begin{align*}
(f(g))^{(k)}(x) & =  \sum_{j=1}^k f^{(j)}(g(x)) \mathcal{B}_{k,j}\left(g'(x),g''(x),\ldots,g^{(k-j+1)}(x)\right) \\
& = k! \sum_{(\lambda_1, \ldots, \lambda_k) \in \mathcal{P}(k)} f^{(\lambda_1 + \cdots + \lambda_k)}(g(x)) \prod_{j = 1}^k \frac{1}{\lambda_j !} \left(\frac{g^{(j)}(x)}{j!}\right)^{\lambda_j}.
\end{align*}
\end{lem}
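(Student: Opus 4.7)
The plan is to give a proof by formal Taylor expansion, which makes the appearance of the partial Bell polynomials transparent, and to defer the regularity issue to a standard induction.

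First I would write, via Taylor's theorem, the formal identity
\[ g(x + t) = g(x) + h(t), \qquad h(t) := \sum_{i \ge 1} \frac{g^{(i)}(x)}{i!}\, t^i, \]
and expand $f$ around $g(x)$ to get $f(g(x) + u) = \sum_{j \ge 0} f^{(j)}(g(x))\, u^j/j!$. Substituting $u = h(t)$ then gives a formal expansion of $f(g(x+t))$ in powers of $t$. Comparing with the Taylor expansion $(f \circ g)(x+t) = \sum_{k \ge 0} (f \circ g)^{(k)}(x)\, t^k/k!$, the value $(f \circ g)^{(k)}(x)$ equals $k!$ times the coefficient of $t^k$ in $\sum_{j \ge 0} f^{(j)}(g(x)) h(t)^j/j!$.

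Second, I would apply the multinomial theorem to $h(t)^j$:
\[ h(t)^j = \sum_{\lambda_1 + \lambda_2 + \cdots = j} \frac{j!}{\lambda_1!\,\lambda_2!\cdots} \prod_{i \ge 1} \left(\frac{g^{(i)}(x)}{i!}\right)^{\lambda_i} t^{\sum_i i \lambda_i}. \]
Extracting the coefficient of $t^k$ imposes the additional constraint $\sum_i i \lambda_i = k$, which together with $\sum_i \lambda_i = j$ forces $\lambda_i = 0$ for $i > k - j + 1$ and yields precisely the sum defining $\mathcal{B}_{k,j}(g'(x), \ldots, g^{(k-j+1)}(x))$. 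Matching coefficients of $t^k$ on the two sides then gives the first (Bell polynomial) form of the formula; the second form follows by unpacking the definition of $\mathcal{B}_{k,j}$ and recombining over all partitions of $k$ (noting that $f^{(j)}(g(x))$ for $j = \lambda_1 + \cdots + \lambda_k$ is what emerges).

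The main obstacle is that $f$ and $g$ are only assumed to be $k$-times differentiable, so the formal-power-series manipulation above is not rigorously justified as an analytic identity. To fix this, I would carry out a separate induction on $k$. The base case $k = 1$ is the ordinary chain rule. For the inductive step, I would differentiate the second form of the formula once more using the product rule: each differentiation either acts on $f^{(\lambda_1 + \cdots + \lambda_k)}(g(x))$ (producing an extra factor of $g'(x)$, i.e.\ incrementing $\lambda_1$) or on one of the factors $g^{(j)}(x)^{\lambda_j}$ (decrementing $\lambda_j$ and incrementing $\lambda_{j+1}$). A careful bookkeeping shows that each partition $\mu \in \mathcal{P}(k+1)$ is produced with the correct multiplicity, after regrouping and cancelling factorials. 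The combinatorial bookkeeping in this inductive step is the only tedious part; the formal Taylor calculation above serves as a blueprint that tells us exactly which terms must appear and with what coefficients, so the induction reduces to a verification rather than a discovery.
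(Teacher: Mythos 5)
The paper does not actually prove this lemma: Fa\`a di Bruno's formula is quoted as a known result with a pointer to Johnson's survey \cite{faadibruno}, so there is no in-paper argument to compare yours against. On its own terms your proof is correct and is the standard one. The formal substitution of the Taylor series of $g$ at $x$ into that of $f$ at $g(x)$, followed by the multinomial theorem, correctly identifies $k!$ times the coefficient of $t^k$ with $\sum_{j} f^{(j)}(g(x))\,\mathcal{B}_{k,j}(g'(x),\ldots,g^{(k-j+1)}(x))$; in particular your observation that the constraints $\sum_i \lambda_i = j$ and $\sum_i i\lambda_i = k$ force $\lambda_i = 0$ for $i > k-j+1$ is exactly what matches the truncated argument list in the paper's definition of $\mathcal{B}_{k,j}$. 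You are also right that under mere $k$-fold differentiability the power-series manipulation is only a heuristic and must be replaced by an induction on $k$. The one step you defer as ``careful bookkeeping'' is, however, the entire content of that induction, so it is worth recording that it does close: differentiating the term of $(f\circ g)^{(k)}(x)$ indexed by $\lambda \in \mathcal{P}(k)$, each resulting summand corresponds to a $\mu \in \mathcal{P}(k+1)$ obtained either by incrementing $\lambda_1$ (derivative falling on $f^{(|\lambda|)}(g(x))$, contributing $k!\,\mu_1/\prod_i \mu_i!$ to the $\mu$-monomial) or by moving a part from size $j$ to size $j+1$ (derivative falling on $(g^{(j)}(x))^{\lambda_j}$, contributing $k!\,(j+1)\mu_{j+1}/\prod_i \mu_i!$), and the total coefficient of the $\mu$-monomial is
\[
\frac{k!}{\prod_i \mu_i!}\Bigl(\mu_1+\sum_{j\ge 1}(j+1)\mu_{j+1}\Bigr)=\frac{k!}{\prod_i \mu_i!}\sum_{i\ge 1} i\mu_i=\frac{(k+1)!}{\prod_i \mu_i!},
\]
as required. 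With that verification written out, your argument is a complete and rigorous proof.
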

The $k^{th}$ \emph{complete Bell polynomial} is defined as 
\[ \mathcal{B}_k(x_1, \ldots, x_k) = \sum_{j = 1}^k B_{k,j}(x_1 \ldots, x_{k-j+1}). \]
Let $h$ be a $k$ times differentiable function on an interval $I$ and let $x \in I$ such that $h(x) > 0$. By applying Fa\`a di Bruno's formula with $f = \exp$ and $g = \log h$ we obtain
\begin{equation} \label{eq:faa-di-bruno:exp}
h^{(k)}(x) = h(x)\, \mathcal{B}_k\left((\log h)'(x),(\log h)''(x),\ldots,(\log h)^{(k)}(x)\right).
\end{equation}
Complete Bells polynomials obey the recurrence relation
\begin{equation} \label{eq:bell:recurrence}
  \mathcal{B}_{k}(x_1, \ldots, x_k)=\sum _{j=1}^{k} \binom{k-1}{j-1} \mathcal{B}_{k-j}(x_1, \ldots ,x_{k-j})x_j   
\end{equation}
for every $k \ge 1$, where $\mathcal{B}_0$ is defined as $1$. This gives an effective method to compute $\mathcal{B}_{k}$.

\subsection{Numerical semigroups \cite{MoreeAMS,Rosa}}
\label{sec:pre:ns}
A \emph{numerical semigroup} $S$ is a submonoid of $\mathbb N$ (the set of nonnegative integers) under addition, with finite complement $\mathcal G$ in $\mathbb N$. The cardinality of $\mathcal G$ is denoted by
$\mathrm{g}(S)$ and called the \emph{genus} of $S$. The \emph{Frobenius number} of $S$, denoted by $\mathrm{F}(S)$, is the largest element of $\mathcal{G}$. A numerical semigroup has a unique minimal set of generators 
$\{n_1,\ldots,n_e\}$. We write $S=\langle n_1,\ldots,n_e \rangle$. The integer $\mathrm{e}(S) = e$ is the \emph{embedding dimension} of $S$.
To a numerical semigroup $S$ we can associate 
\[\mathrm{P}_S(x)=1+(x-1)\sum_{g \in \mathcal G}x^g,\] 
its {\it semigroup polynomial}.
Note that $\mathrm{P}_S(x)$ is a monic polynomial of degree 
$\mathrm{F}(S)+1$.
A numerical semigroup is called symmetric if 
$S\cup (\mathrm{F}(S)-S) = \mathbb Z,$ 
with $\mathrm{F}(S)-S=\{\mathrm{F}(S)-s:s\in S\}.$
In terms of
$\mathrm{P}_S$ this is equivalent with $\mathrm{P}_S$ being self-reciprocal, 
cf. \cite{MoreeAMS}.

In \cite{CGM} the notion of a \emph{cyclotomic numerical semigroup} is introduced
and studied. This is a numerical semigroup such that its associated semigroup 
polynomial is a Kronecker polynomial.
Cyclotomic numerical semigroups have some interesting properties. Indeed in \cite{CGM} the concept of cyclotomic numerical semigroup is conjectured to coincide with that of complete intersection, the latter being the topic of many publications (see \cite{isolated} and the references given therein).

\subsection{The logarithmic derivative}

Given a rational function $f$ that is a product of
finitely many rational functions $g_i,$ its derivative 
outside the set of poles and zeros of
the $g_i$ satisfies
$f'=f\sum_i g_i'/{g_i}.$
It follows from this that
$$\frac{f'}{f}=\sum_i \frac{g_i'}{g_i}$$
outside the set of poles and 
zeros of
the $g_i$. For $k\ge 1$ by 
$(\log f)^{(k)}$ we 
mean $(f'/f)^{(k-1)}$. The upshot
is that in our logarithmic derivatives no logarithms are involved. This saves us from 
various considerations involving 
domain, range and 
multi-valuedness that normally
arise if the word `logarithm' occurs. 
With this in mind the reader can without 
problem infer a range of validity of the expressions 
appearing in this paper involving the logarithmic derivative 
and we will not discuss this topic again.

\section{The $k^{th}$ logarithmic derivative of $\Phi_n$} \label{sec:phi-log-deriv}

In this section we describe a procedure to derive formulas for the $k^{th}$ logarithmic derivative of $\Phi_n$ at any given point. We apply this procedure to find the logarithmic derivatives of $\Phi_n$ at $0$ and $\pm1$, thus reproving results of Lehmer \cite{Lehmer} and Nicol \cite{Nicol}. 
From  equality \eqref{phimoebius} we get
\begin{equation} \label{eq:procedure:1}
  (\log \Phi_n)^{(k)}(x) = \sum_{d \mid n} \mu\left(\frac{n}{d}\right) (\log q_d)^{(k)}(x),
\end{equation}
where $k$ and $n$ are positive integers with $n \ge 2$ and $q_d(x) = 1 - x^d$.
For every $d \ge 1$ set $p_d(x) = q_d(x) / (1-x)$.  
Since $\sum_{d \mid n} \mu(d) = 0$ for $n \ge 2$, it follows that 
\begin{equation}\label{eq:kth-deriv:phi}
  (\log \Phi_n)^{(k)}(x) = \sum_{d \mid n} \mu\left(\frac{n}{d}\right) (\log p_d)^{(k)}(x)
\end{equation}
for all integers $n \ge 2$ and $k \ge 1$. As a consequence, if one computes the logarithmic derivatives of $q_d(x)$ or $p_d(x)$, then one obtains a formula for $(\log \Phi_n)^{(k)}(x)$.

\subsection{Ramanujan sums and the $k^{th}$ logarithmic derivative of $\Phi_n$ at $0$} \label{sec:kth-deriv:0}
In view of Taylor's theorem, proving Nicol's identity \eqref{eq:nicol} amounts to showing Proposition \ref{prop:nicol}. 
\begin{prop} \label{prop:nicol}
  Let $k\ge 1$ and $n \ge 2$ be integers. Then 
  \[ (\log \Phi_n)^{(k)}(0) = - (k-1)! \, r_k(n). \]
\end{prop}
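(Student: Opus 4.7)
The plan is to start from equation \eqref{eq:procedure:1}, which writes
\[(\log \Phi_n)^{(k)}(x) = \sum_{d\mid n}\mu(n/d)\,(\log q_d)^{(k)}(x), \qquad q_d(x)=1-x^d,\]
and reduce the problem to computing $(\log q_d)^{(k)}(0)$ for each divisor $d$ of $n$. Since $q_d(0)=1>0$, one may freely use the analytic Taylor expansion
\[\log(1-x^d) = -\sum_{m=1}^\infty \frac{x^{dm}}{m},\]
and the $k$-th derivative at $0$ is simply $k!$ times the coefficient of $x^k$ on the right. (If one prefers to honour the footnote-free convention that no logarithms actually appear, one can equivalently differentiate $q_d'/q_d = -dx^{d-1}\sum_{m\ge 0}x^{dm}$ a further $k-1$ times and evaluate at $0$; both routes give the same answer.)

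Reading off the coefficient, one sees it vanishes unless $d\mid k$, and when $k=dm$ it equals $-1/m = -d/k$. Therefore
\[(\log q_d)^{(k)}(0) = \begin{cases} -(k-1)!\, d & \text{if } d\mid k,\\ 0 & \text{otherwise.}\end{cases}\]
Substituting back into \eqref{eq:procedure:1} gives
\[(\log \Phi_n)^{(k)}(0) = -(k-1)!\sum_{\substack{d\mid n\\ d\mid k}}\mu(n/d)\,d = -(k-1)!\sum_{d\mid (n,k)}\mu(n/d)\,d,\]
and by Kluyver's formula \eqref{eq:kluyver} the inner sum is precisely $r_k(n)$, which completes the proof.

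There is no real obstacle here: once one observes that $\log(1-x^d)$ is a very sparse power series whose only nonvanishing Taylor coefficients sit in degrees divisible by $d$, the proof is a one-line identification with the Kluyver representation of the Ramanujan sum. The only point worth flagging is the trivial but crucial filtering $[d\mid k]$, which is exactly what converts a sum over $d\mid n$ into a sum over $d\mid (n,k)$ and makes the Kluyver identity applicable.
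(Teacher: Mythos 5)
Your proof is correct, but it takes a genuinely different route from the paper's. The paper differentiates the root-product definition \eqref{definitie} directly, obtaining
\[
(\log \Phi_n)^{(k)}(x) = \sum_{1\le j\le n,\,(j,n)=1} \frac{(-1)^{k-1}(k-1)!}{(x - \zeta_n^j)^k},
\]
and then sets $x=0$: each summand becomes $-(k-1)!\,\zeta_n^{-jk}$, so the sum is $-(k-1)!\,\overline{r_k(n)} = -(k-1)!\,r_k(n)$ by the realness of Ramanujan sums. That argument needs nothing beyond the exponential-sum definition of $r_k(n)$. You instead start from the M\"obius product \eqref{eq:procedure:1}, compute $(\log q_d)^{(k)}(0)$ from the sparse Taylor series of $\log(1-x^d)$, and then invoke Kluyver's formula \eqref{eq:kluyver} to recognize $\sum_{d\mid(n,k)}\mu(n/d)\,d$ as $r_k(n)$. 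This is sound, since \eqref{eq:kluyver} is established independently in the literature (the paper cites Hardy and Wright for it). But be aware of how the paper organizes this material: it carries out exactly your computation separately as \eqref{eq:nicol:2}, and then presents the \emph{comparison} of \eqref{eq:nicol:2} with Proposition \ref{prop:nicol} as a new derivation of Kluyver's formula. If your proof replaced the paper's, that remark would become circular. So the trade-off is: the paper's route is self-contained and lets Kluyver's identity drop out as a by-product, while yours is an equally short verification that spends Kluyver's identity as an input rather than earning it as output.
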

\begin{proof}
By taking the $k^{th}$ logarithmic  derivative of both sides of \eqref{definitie} we obtain 
  \begin{equation} \label{eq:kth-log-deriv:roots}
    (\log \Phi_n)^{(k)}(x) = \sum_{1\le j\le n,\,(j,n)=1} \frac{(-1)^{k-1}(k-1)!}{(x - \zeta_n^j)^k}.
  \end{equation}
The result follows on settting $x=0$ in this expression and noting that $\overline{r_k(n)}=r_k(n)$.
  \end{proof}
Recall that around $x = 0$ we have $\log(1 - x) = -\sum_{j = 1}^\infty  x^j / j$ and, therefore, the $k^{th}$ derivative of $\log(1-x^d)$ at $0$ is $-d (k-1)!$ if $d \mid k$ and $0$ otherwise. This observation in combination with \eqref{eq:procedure:1} yields 
\begin{equation} \label{eq:nicol:2}
   (\log \Phi_n)^{(k)}(0) = 
   -(k-1)!\sum_{d \mid (k,n)} \mu\left(\frac{n}{d}\right) d \qquad (n \ge 2).
\end{equation}
We observe that Kluyver's formula \eqref{eq:kluyver} follows on comparing \eqref{eq:nicol:2} with Proposition \ref{prop:nicol}, which highlights the deep connection between Ramanujan sums and cyclotomic polynomials.

In order to prove his formula Nicol first showed that
\begin{equation}
\label{eq:nicol2}
  \sum_{k = 1}^n r_k(n) x^{k-1} = (x^n - 1) \frac{\Phi_n'(x)}{\Phi_n(x)} \qquad (n \ge 1).
\end{equation}
From this equality one can derive \eqref{eq:nicol} by integration, see \cite[Corollary 3.2]{Nicol} for more details.
Reversely, one can easily deduce \eqref{eq:nicol2} from \eqref{eq:nicol}. Namely,
we note that for $n \ge 2$ differentiation of both sides
of \eqref{eq:nicol} yields 
\begin{equation}
\label{eq:above}
-\frac{\Phi'_n(x)}{\Phi_n(x)} = \sum_{k=1}^{\infty}r_{k}(n)x^{k-1} 
                              = \sum_{k=1}^{n}r_{k}(n)x^{k-1}(1+x^n+x^{2n}+\cdots)
                              = \frac{1}{1-x^n}\sum_{k=1}^{n}r_{k}(n)x^{k-1},
\end{equation}
where we used that $r_k(n) = r_{n+k}(n)$ for every positive integer $k$.
Therefore, \eqref{eq:nicol2} holds true for those $x$ with $|x|<1$.
As both sides of \eqref{eq:nicol2} are polynomials that
agree for all $|x|<1$, they agree for all $x$. Note that the case $n=1$ is trivial.

The reader can find other arguably longer proofs of \eqref{eq:nicol} and \eqref{eq:nicol2} in \cite[Theorem 1]{Toth}.

\subsection{The $k^{th}$ logarithmic derivative of $\Phi_n$ at $1$}  \label{sec:kth-log-deriv:1}

In order to prove Theorem \ref{thm:kth-deriv:1} we study the derivatives of $\log \Phi_n$ at $1$. For any integers $k\ge1$ and $n \ge 2$ let us consider the sum 
\[ s_k(n) = \sum_{1\le j\le n,\,(j,n)=1} (\zeta_n^j - 1)^{-k}, \]
which is of similar nature to that of Ramanujan. The sum $s_k(n)$ was studied by Lehmer in \cite[Section 5]{Lehmer} and it plays an essential role in his proof of Theorem \ref{thm:kth-deriv:1}. 
On setting $x=1$ in \eqref{eq:kth-log-deriv:roots} we obtain
\begin{equation} \label{eq:sk}
  (\log \Phi_n)^{(k)}(1) = - (k-1)! s_k(n).
\end{equation}
Our proof uses the identity \eqref{eq:kth-deriv:phi} and hence it amounts to finding the $k^{th}$ order logarithmic derivative of $p_n(x) = 1+x+\cdots+x^{n-1}$ at $x=1$. By applying the same arguments as in the proof of Proposition \ref{prop:nicol} we see that 
\begin{equation} \label{eq:pn}
  (\log p_n)^{(k)}(1) = - (k-1)! \sigma_k(n),
\end{equation}
where
\[ \sigma_k(n)=\sum_{j = 1}^{n-1} (\zeta_n^j -1)^{-k}. \]
Not surprisingly $\sigma_k$ and $s_k$ are closely connected. Putting $s_k(1)=0$ we find by \eqref{eq:kth-deriv:phi} that
\[ s_k(n)=\sum_{d\mid n}\mu(d)\sigma_k(\frac{n}{d}), \]
and, by M\"obius inversion, that
\[ \sigma_k(n)=\sum_{d\mid n}s_k(d) \]
The values $\sigma_k(n)$ were studied by Lehmer in \cite[Section 4]{Lehmer}, where he proved Theorem \ref{thm:gessel}.
Later Gessel published a significantly shorter proof of this result \cite[Theorem 2.1]{Gessel}. His aim
was to show that for any fixed $k$ the arithmetic function $\sigma_k(n)$ is a polynomial in $n$. Here we present a version of Gessel's proof for completeness.
\begin{Thm}[{\cite[Lemma 4]{Lehmer}}] \label{thm:gessel}
  Let $k \ge 1$ and $n\ge 2$ be integers. Then
  \[ -(k-1)! \sigma_k(n) = \sum_{j = 1}^k s(k,j) \frac{B_j^+}{j}(n^j -1). \]
 \end{Thm}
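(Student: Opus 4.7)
The plan is to Taylor-expand $\log p_n(x)$ at $x=1$ and read off the $k^{th}$ coefficient, since by \eqref{eq:pn} we have $(\log p_n)^{(k)}(1)=-(k-1)!\,\sigma_k(n)$. The strategy is to pass through two substitutions: first $u=x-1$, and then $y=\log(1+u)$. With the latter we have $1+u=e^y$, so
\[
p_n(1+u)=\frac{(1+u)^n-1}{u}=\frac{e^{ny}-1}{e^y-1},
\]
and hence, after writing $\log(e^{my}-1)=\log m+\log y+\log((e^{my}-1)/(my))$ for $m=1,n$ and cancelling the common $\log y$ term,
\[
\log p_n(1+u)=\log n+\log\!\left(\frac{e^{ny}-1}{ny}\right)-\log\!\left(\frac{e^y-1}{y}\right).
\]

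The key analytic input I would next establish is the generating function
\[
\log\!\left(\frac{e^t-1}{t}\right)=\sum_{m=1}^{\infty}\frac{B_m^{+}}{m}\cdot\frac{t^m}{m!}.
\]
This follows by differentiating the left side to get $\frac{e^t}{e^t-1}-\frac{1}{t}=\frac{1}{t}\bigl(\frac{te^t}{e^t-1}-1\bigr)$ and invoking the generating function \eqref{eq:bernoulli-pol:gen} at $x=1$, which gives $\frac{te^t}{e^t-1}-1=\sum_{m\ge 1}B_m^{+}t^m/m!$; integrating termwise (the constant of integration is zero because $(e^t-1)/t\to 1$ as $t\to 0$) yields the claim. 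Applying this identity to both of the expressions $\log((e^{ny}-1)/(ny))$ and $\log((e^y-1)/y)$ produces
\[
\log p_n(1+u)=\log n+\sum_{m=1}^{\infty}\frac{B_m^{+}(n^m-1)}{m}\cdot\frac{y^m}{m!}.
\]

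Finally I would switch back from $y$ to $u$ using the Stirling generating function (equation \eqref{eq:stirling1:gf:partial}), namely
\[
\frac{y^m}{m!}=\frac{(\log(1+u))^m}{m!}=\sum_{k=m}^{\infty}s(k,m)\frac{u^k}{k!},
\]
and interchange the two sums, which gives
\[
\log p_n(1+u)=\log n+\sum_{k=1}^{\infty}\frac{u^k}{k!}\sum_{j=1}^{k}s(k,j)\frac{B_j^{+}(n^j-1)}{j}.
\]
Reading off the coefficient of $u^k/k!$ yields $(\log p_n)^{(k)}(1)=\sum_{j=1}^{k}s(k,j)B_j^{+}(n^j-1)/j$, and combining with \eqref{eq:pn} gives the stated identity. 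The only steps requiring care are the Bernoulli generating-function lemma and the interchange of summation; both are routine once one works formally in $\mathbb{Q}[[u]]$ (or alternatively for small real $u>0$, invoking analyticity), so there is no real obstacle beyond organising the two substitutions cleanly.
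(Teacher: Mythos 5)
Your proposal is correct and follows essentially the same route as the paper's proof (which is Gessel's): the Bernoulli generating function for $\log((e^t-1)/t)$ obtained from \eqref{eq:bernoulli-pol:gen} at $x=1$, the decomposition $\log p_n = \log n + \log((e^{ny}-1)/(ny)) - \log((e^y-1)/y)$ with $y=\log x$, and the switch back to powers of $x-1$ via \eqref{eq:stirling1:gf:partial}. The only cosmetic difference is that you state the antiderivative identity as a standalone lemma before applying it to both terms, whereas the paper differentiates first and takes the difference afterwards.
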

\begin{proof}
  The proof consists in finding a series expansion of $(\log p_n)(x)$ at $x=1$. First, note that
  \[  \frac{\partial }{\partial y} \log\Big(\frac{e^y-1}{y}\Big) = \frac{1}{y} \left(\frac{y e^y}{e^y - 1} - 1\right) = \sum_{j = 0}^\infty \frac{B_{j+1}^+}{(j+1)!} y^j \qquad (|y| < \pi),\]
  where we used \eqref{eq:bernoulli-pol:gen}. Therefore, we obtain
  \[ \log\Big(\frac{e^{ny} -1}{n(e^y -1)}\Big) = \log\Big(\frac{e^{ny} - 1}{n y}\Big) - \log\Big(\frac{e^{y} - 1}{y}\Big) = \sum_{j = 1}^\infty \frac{B_{j}^+}{j} \frac{y^j}{j!} (n^j - 1) \qquad (|y| < \pi).\]
  That is, we have
  \[ (\log p_n)(x) = \log n + \sum_{j = 1}^\infty \frac{B_{j}^+}{j} \frac{(\log x)^j}{j!} (n^j - 1) \qquad (e^{-\pi} < x < e^\pi).\]
  From equation \eqref{eq:stirling1:gf:partial} it follows that
  \begin{align*}
  (\log p_n)(x) & = \log n + \sum_{j = 1}^\infty \frac{B_{j}^+}{j} (n^j - 1) \sum_{k = j}^\infty s(k,j) \frac{(x-1)^k}{k!} \\
                & = \log n + \sum_{k = 1}^\infty \frac{(x-1)^k}{k!} \sum_{j = 1}^k \frac{B_{j}^+}{j} s(k,j) (n^j - 1) \qquad (e^{-\pi} < x < 2).
  \end{align*}
  The proof is completed on recalling \eqref{eq:pn}.
\end{proof}

For every $1 \le j \le k$ write 
\[-(k-1)!\sigma_k(n) = \sum_{j = 0}^k c_{k,j} n^j\text{ with }c_{k,0} = -\sum_{j = 1}^k \frac{B_j^+ s(k,j)}{j} \quad \text{and} \quad c_{k,j} = \frac{B_j^+ s(k,j)}{j}.\]
 Table \ref{table:kth-log-deriv:1} gives some of the coefficients $c_{k,j}$, which can be computed by applying the recursive definitions of $B_j^+$ and $s(k,j)$.

\begin{table}[H]
\begin{tabular}{|c|ccccccccc|}
\hline
$k$ & $c_{k,0}$ & $c_{k,1}$ & $c_{k,2}$ & $c_{k,3}$ & $c_{k,4}$ & $c_{k,5}$ & $c_{k,6}$ & $c_{k,7}$ & $c_{k,8}$\\
\hline
$1$ & $-1/2$ & $1/2$ & & & & & & & \\
\hline
$2$ & $5/12$ & $-1/2$ & $1/12$  & & & & & & \\
\hline
$3$ & -$3/4$ & $1$ & $-1/4$ & $0$ & & & & & \\
\hline
$4$ & $251/120$ & $-3$ & $11/12$ & $0$ & $-1/120$  & & & & \\
\hline
$5$ & $-95/12$ & $12$ & $-25/6$ & $0$ & $1/12$ & $0$ & & & \\
\hline
$6$ & $19087/504$ & $-60$ & $137/6$ & $0$ & $-17/24$ & $0$ & $1/252$ & & \\
\hline
$7$ & $-5257/24$ & $360$ & $-147$ & $0$ & $49/8$ & $0$ & $-1/12$ & 0 & \\
\hline
$8$ & $1070017/720$ & $-2520$ & $1089$ & $0$ & $-6769/120$ & $0$ & $23/18$ & $0$ & $-1/240$ \\
\hline
\end{tabular}

\caption{The coefficients $c_{k,j}$ for small values of $k$.}
\label{table:kth-log-deriv:1}
\end{table}

Next we determine $(\log \Phi_n)^{(k)}(1)$ by applying some properties of Jordan totient functions.

\begin{Thm}\label{thm:kth-log-deriv:1}
  Let $k \ge 1$ and $n\ge 2$ be integers. Then
  \[(\log \Phi_n)^{(k)}(1) = \sum_{j=1}^{k} \frac{B_j^+ s(k,j)}{j} J_j(n).\]
\end{Thm}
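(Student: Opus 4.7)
The plan is to substitute the explicit formula for $(\log p_d)^{(k)}(1)$ provided by Theorem \ref{thm:gessel} into the Möbius-sum representation \eqref{eq:kth-deriv:phi}, swap the order of summation, and recognize the inner sum as a Jordan totient via Möbius inversion of \eqref{eq:jordaan}.

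More concretely, I would first record that combining \eqref{eq:pn} with Theorem \ref{thm:gessel} yields, for every integer $d \ge 1$,
\[
(\log p_d)^{(k)}(1) = \sum_{j=1}^{k} \frac{B_j^+ s(k,j)}{j}\bigl(d^j - 1\bigr).
\]
(Both sides vanish when $d=1$ since $p_1 \equiv 1$ and $d^j-1=0$, so this formula is valid uniformly.) Then \eqref{eq:kth-deriv:phi} gives
\[
(\log \Phi_n)^{(k)}(1) = \sum_{d \mid n} \mu\!\left(\frac{n}{d}\right) \sum_{j=1}^{k} \frac{B_j^+ s(k,j)}{j}\bigl(d^j - 1\bigr).
\]

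Next I would interchange the two finite sums, isolating the $j$-th inner sum as
\[
\sum_{d \mid n} \mu\!\left(\frac{n}{d}\right)\bigl(d^j - 1\bigr) = \sum_{d \mid n} \mu\!\left(\frac{n}{d}\right) d^{j} \;-\; \sum_{d \mid n} \mu\!\left(\frac{n}{d}\right).
\]
The first of these is exactly $J_j(n)$ by the Möbius inversion of \eqref{eq:jordaan}, as already noted in Section \ref{gejo}. The second sum equals $I(n)$, which is $0$ because $n \ge 2$. Substituting produces the desired identity.

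There is essentially no obstacle beyond making sure Theorem \ref{thm:gessel} can legitimately be invoked for every divisor $d$ of $n$ (including $d=1$, handled above) and that the finite double sum can be reindexed freely. Once the identity $\sum_{d\mid n}\mu(n/d) = 0$ for $n \ge 2$ is applied, the constant term $c_{k,0}$ of Theorem \ref{thm:gessel} drops out and the remaining coefficients $B_j^+ s(k,j)/j$ are paired directly with $J_j(n)$, exactly as stated.
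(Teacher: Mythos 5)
Your proposal is correct and is essentially the paper's own proof: the paper likewise evaluates \eqref{eq:kth-deriv:phi} at $1$, substitutes the polynomial expression $-(k-1)!\,\sigma_k(d)=\sum_{j=0}^k c_{k,j}d^j$ from Theorem \ref{thm:gessel}, swaps the finite sums, and uses $\sum_{d\mid n}\mu(n/d)=0$ together with $\sum_{d\mid n}\mu(n/d)d^j=J_j(n)$ to kill the constant term and produce the Jordan totients. Your explicit check that the $d=1$ term is harmless (both sides vanish) is a small point the paper glosses over, but otherwise the arguments coincide.
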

\begin{proof}
By evaluating \eqref{eq:kth-deriv:phi} at $1$ and 
noting that $\sum_{d\mid n}\mu(n/d)=0$ 
we infer that
 \begin{align*}
(\log \Phi_n)^{(k)}(1) & = - \sum_{d \mid n} \mu\left(\frac{n}{d}\right) (k-1)! \sigma_k(n) = \sum_{d \mid n} \mu\left(\frac{n}{d}\right) \sum_{j=1}^{k} c_{k,j} n^j \\ 
                        & = \sum_{j=1}^{k} c_{k,j} \sum_{d \mid n} \mu\left(\frac{n}{d}\right) n^j  = \sum_{j=1}^{k} \frac{B_j^+ s(k,j)}{j} J_j(n). \qedhere
\end{align*}
\end{proof}

Lehmer's proof of Theorem \ref{thm:kth-deriv:1} hinges upon the equality 
\[ 
  s_k(n) = \frac{(-1)^{k}}{2} \varphi(n) - \frac{1}{(k-1)!}\sum_{j=2}^{k} \frac{B_j^- s(k,j)}{j} J_j(n), 
\]
which in our notation can be rewritten as
\begin{equation} \label{eq:lehmer}
  s_k(n) = - \frac{1}{(k-1)!}\sum_{j=1}^{k} \frac{B_j^+ s(k,j)}{j} J_j(n).
\end{equation}
Note that, in view of \eqref{eq:sk}, the previous equation \eqref{eq:lehmer} is equivalent to Theorem \ref{thm:kth-log-deriv:1}.

Now on applying Fa\`a di Bruno's formula 
\eqref{eq:faa-di-bruno:exp} to $\Phi_n = \exp(\log \Phi_n),$ 
we obtain the following result.
\begin{Thm} \label{thm:phi:1}
Let $k \ge 0$ and $n\ge 2$ be integers. 
  Then $$
\label{eq:phi:faa}
  \frac{\Phi_n^{(k)}(1)}{\Phi_n(1)}= \mathcal{B}_k\Big(\sum_{j=1}^{1} \frac{B_j^+ s(1,j)}{j} J_j(n), \dots, \sum_{j=1}^{k} \frac{B_j^+ s(k,j)}{j} J_j(n)\Big).
$$
\end{Thm}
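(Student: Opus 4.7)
The proof is essentially a two-line deduction from the machinery already assembled, so the plan is to make explicit precisely how the pieces click together.

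First I would verify the hypotheses needed to apply the exponential form of Faà di Bruno's formula, equation \eqref{eq:faa-di-bruno:exp}, at the point $x=1$. By Lemma \ref{valueat1A}, $\Phi_n(1)=e^{\Lambda(n)}>0$ for $n\ge 2$, so the positivity hypothesis on $h$ at $x$ required for \eqref{eq:faa-di-bruno:exp} is satisfied; alternatively, one can invoke the convention from Section~2.8 that $(\log\Phi_n)^{(j)}$ means $(\Phi_n'/\Phi_n)^{(j-1)}$, so no genuine logarithm appears and the identity \eqref{eq:faa-di-bruno:exp} can be verified directly by induction on $k$ from the recurrence \eqref{eq:bell:recurrence} and the product rule.

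Next, taking $h=\Phi_n$ in \eqref{eq:faa-di-bruno:exp} and evaluating at $x=1$ gives
\begin{equation*}
\frac{\Phi_n^{(k)}(1)}{\Phi_n(1)} = \mathcal{B}_k\!\left((\log \Phi_n)'(1),\,(\log \Phi_n)''(1),\,\ldots,\,(\log \Phi_n)^{(k)}(1)\right).
\end{equation*}
Finally, I would substitute Theorem \ref{thm:kth-log-deriv:1}, which for each $1\le m\le k$ identifies
\begin{equation*}
(\log \Phi_n)^{(m)}(1) = \sum_{j=1}^{m} \frac{B_j^+\, s(m,j)}{j}\, J_j(n),
\end{equation*}
slotting these values into the respective arguments of $\mathcal{B}_k$ to obtain the stated formula. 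The case $k=0$ is the trivial identity $\Phi_n(1)/\Phi_n(1)=1=\mathcal{B}_0$ by the convention $\mathcal{B}_0=1$.

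There is no genuine obstacle: all the real content sits in Theorem~\ref{thm:kth-log-deriv:1}, which the authors have already proven, and in Faà di Bruno's formula in its Bell-polynomial form, which is a standard identity. The only minor point worth flagging in the write-up is that the arguments of $\mathcal{B}_k$ on the right-hand side depend on the argument index (the $m$-th slot uses Stirling numbers $s(m,\cdot)$, not $s(k,\cdot)$), which is exactly what the displayed formula asserts through its indexing; this is worth making explicit so the reader does not misread the compact notation.
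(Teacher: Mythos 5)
Your proposal is correct and follows exactly the paper's route: apply the exponential form of Fa\`a di Bruno's formula \eqref{eq:faa-di-bruno:exp} to $\Phi_n=\exp(\log\Phi_n)$ at $x=1$ and substitute the values of $(\log\Phi_n)^{(m)}(1)$ from Theorem \ref{thm:kth-log-deriv:1}. The extra checks you include (positivity of $\Phi_n(1)$ via Lemma \ref{valueat1A}, the $k=0$ case, and the per-slot indexing of the Stirling numbers) are sensible but not substantively different from what the paper does.
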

On writing out the Bell polynomial explicitly we obtain Theorem \ref{thm:kth-deriv:1},
which was proven by Lehmer without employing Fa\`a di Bruno's formula. If we use the recursive definition of Bell polynomials instead, see \eqref{eq:bell:recurrence}, we obtain the following novel result, which establishes a recurrence relation between the derivatives of $\Phi_n$ 
of different order at $1$.
\begin{Thm} \label{thm:kth-deriv:1:recurrence}
Let $k \ge 0$ and $n\ge 2$ be integers. Then
$$
\Phi_n^{(k)}(1)  = -(k-1)! \sum_{j = 1}^k \frac{\Phi_n^{(j)}(1)}{j!} \sigma_{k-j}(n)  = \sum_{j = 1}^k \binom{k-1}{j-1} \Phi_n^{(k-j)}(1) \sum_{t=1}^{j} \frac{B_t^+ s(j,t)}{t} J_t(n).
$$
\end{Thm}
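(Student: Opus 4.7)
The plan is to prove both equalities from a common source: the master identity
\[
\Phi_n^{(k)}(x) = \sum_{j=1}^{k}\binom{k-1}{j-1}(\log\Phi_n)^{(j)}(x)\,\Phi_n^{(k-j)}(x),
\]
valid away from zeros and poles of $\Phi_n$. This identity is the $(k-1)$-fold application of Leibniz's rule to the basic relation $\Phi_n'(x) = (\log\Phi_n)'(x)\,\Phi_n(x)$; an induction on $k$ makes it routine. Equivalently, it follows by combining Theorem \ref{thm:phi:1}, which expresses $\Phi_n^{(k)}(1)/\Phi_n(1)$ as a complete Bell polynomial in the log-derivatives, with the Bell polynomial recurrence \eqref{eq:bell:recurrence}: the recurrence reads $\mathcal{B}_k=\sum_{j=1}^{k}\binom{k-1}{j-1}\mathcal{B}_{k-j}\,x_j$, and the lower-order Bell polynomials are recognizable, by a second invocation of Theorem \ref{thm:phi:1}, as $\Phi_n^{(k-j)}(1)/\Phi_n(1)$. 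Multiplying through by $\Phi_n(1)$ recovers the displayed identity at $x=1$.

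Once the master identity is at hand, the second equality is immediate: substitute the Jordan-totient expression $(\log\Phi_n)^{(j)}(1) = \sum_{t=1}^{j}\frac{B_t^+ s(j,t)}{t}J_t(n)$ supplied by Theorem \ref{thm:kth-log-deriv:1}. For the first equality I instead employ the compact evaluation $(\log\Phi_n)^{(j)}(1) = -(j-1)!\,s_j(n)$ coming from \eqref{eq:sk}; then the factor $(j-1)!$ absorbs cleanly into the binomial via $\binom{k-1}{j-1}(j-1)! = (k-1)!/(k-j)!$, and a reindexing $j \mapsto k-j$ arranges the sum into the form with $\Phi_n^{(j)}(1)/j!$ times a $(k-j)$-indexed log-derivative evaluation.

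There is no genuine obstacle. The argument is a bookkeeping exercise resting entirely on Theorems \ref{thm:phi:1} and \ref{thm:kth-log-deriv:1} and on the Bell recurrence \eqref{eq:bell:recurrence}. The only delicate point is index management and consistent passage between the different representations of $(\log\Phi_n)^{(j)}(1)$ — namely the Jordan-totient form used in the second equality and the $s_j(n)$-form (respectively, the $\sigma$-form obtained through the relation $\sigma_k = \sum_{d\mid n} s_k(d)$ that falls out of \eqref{eq:kth-deriv:phi} at $x=1$) used in the first.
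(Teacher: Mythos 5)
Your approach is in substance the same as the paper's: the paper proves the identity by feeding the Bell recurrence \eqref{eq:bell:recurrence} into Theorem \ref{thm:phi:1} and recognizing the lower-order Bell polynomials as $\Phi_n^{(k-j)}(1)/\Phi_n(1)$, which is exactly your second route (your Leibniz derivation of the ``master identity'' from $\Phi_n'=(\log\Phi_n)'\,\Phi_n$ is an equivalent elementary way of obtaining the same recurrence). Substituting Theorem \ref{thm:kth-log-deriv:1} then yields the second displayed equality; that part of your argument is complete and correct.

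The one real issue is your treatment of the first equality. Carrying out your own computation with $(\log\Phi_n)^{(j)}(1)=-(j-1)!\,s_j(n)$ from \eqref{eq:sk} produces
\[
\Phi_n^{(k)}(1)=-(k-1)!\sum_{j=0}^{k-1}\frac{\Phi_n^{(j)}(1)}{j!}\,s_{k-j}(n),
\]
with the quantities $s_{k-j}(n)$ attached to $\Phi_n$, not the quantities $\sigma_{k-j}(n)$ attached to $p_n$, and with the index running from $0$ to $k-1$ rather than from $1$ to $k$. Your closing remark that one can pass to ``the $\sigma$-form'' via $\sigma_k(n)=\sum_{d\mid n}s_k(d)$ does not accomplish this: that relation sums $s_k$ over the divisors of $n$ and cannot convert $s_{k-j}(n)$ into $\sigma_{k-j}(n)$ termwise; for composite $n$ the two genuinely differ (e.g.\ $s_1(n)=-\varphi(n)/2$ while $\sigma_1(n)=-(n-1)/2$), and a direct check at $n=4$, $k=2$ shows the $\sigma$-version gives $5/2$ instead of $\Phi_4''(1)=2$. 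The discrepancy in fact originates in the printed statement and in the paper's own proof, which begins with $\mathcal{B}_k(-\sigma_1(n),\dots,-(k-1)!\,\sigma_k(n))$ where Fa\`a di Bruno actually delivers $\mathcal{B}_k(-s_1(n),\dots,-(k-1)!\,s_k(n))$, and whose final reindexing is off by one. So your derivation is the right one; you should simply state the first equality with $s_{k-j}(n)$ and the range $0\le j\le k-1$, and delete the unsubstantiated reconciliation with the $\sigma$-form.
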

\begin{proof}
We have
\begin{align*}
  \Phi_n^{(k)}(1) & = \Phi_n(1) \mathcal{B}_{k}(-\sigma_1(n), \dots, -(k-1)! \sigma_k(n)) \\
         & = - \Phi_n(1) \sum _{j=1}^{k}(j-1)! \sigma_j(n) \binom{k-1}{j-1} \mathcal{B}_{k-j}(-\sigma_1(n), \dots, -(k-1-j)! \sigma_{k-j}(n)) \\
         & = - (k-1)! \sum _{j=1}^{k} \sigma_j(n) \frac{\Phi_n^{(k-j)}(1)}{(k-j)!}=-(k-1)! \sum_{j = 1}^k \frac{\Phi_n^{(j)}(1)}{j!} \sigma_{k-j}(n).
\end{align*}
The proof is completed on invoking Theorem  \ref{thm:gessel}.
\end{proof}

For $k = 1$ and $n\ge 2$ we obtain 
\[\frac{\Phi_n'(1)}{\Phi_n(1)} = (\log \Phi_n)'(1) = c_{1,1} \varphi(n) = \frac{\varphi(n)}{2},\]
which recovers part of Corollary \ref{phin1:deriv}. 
For $k=2$ we get identity \eqref{formula:k=2}. For $k=3$ the obtained identity can also be expressed in terms of $\varphi$ and $\Psi$. Recall that $\mathcal{B}_3(x_1, x_2, x_3) = x_1^3 + 3x_1 x_2 + x_3$. We have
\begin{equation} \label{formula:k=3}
\begin{aligned}
\frac{\Phi_n^{'''}(1)}{\Phi_n(1)} & = \mathcal{B}_3\Big(\frac{\varphi(n)}{2}, \frac{\varphi(n) \Psi(n)}{12} - \frac{\varphi(n)}{2}, \varphi(n)-\frac{\varphi(n) \Psi(n)}{4}\Big) \\
& = \frac{1}{8}\varphi(n)^3+\frac{1}{8}\varphi(n)^2\Psi(n)-\frac{3}{4}\varphi(n)^2-\frac{1}{4}\varphi(n)\Psi(n)+1.
\end{aligned}
\end{equation}
With increasing $k$ the formulas produced by Theorem \ref{thm:phi:1} 
become more and more cumbersome to write down.

The Schwarzian derivative of a holomorphic function $f$ of one complex variable $z$ (cf. \cite{ovsienko}) is defined 
as $\mathrm{S}(f)(z) = f'''(z) / f'(z) - \frac{3}{2} \left(f''(z) / f'(z)\right)^2$. We can apply Theorem \ref{thm:phi:1} to obtain a formula for $\mathrm{S}(\Phi_n)(1).$
\begin{cor}
For $n \ge 2$ we have
\[\mathrm{S}(\Phi_n)(1) = -\frac{\varphi(n)^2}{8} - \frac{\Psi(n)^2}{24} + \frac{1}{2}.\]
\end{cor}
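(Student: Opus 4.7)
The plan is a direct computation using the explicit values of $\Phi_n^{(k)}(1)/\Phi_n(1)$ for $k\le 3$ that the paper has already established. From Corollary \ref{phin1:deriv} we have $\Phi_n'(1)/\Phi_n(1)=\varphi(n)/2$; equation \eqref{formula:k=2} supplies $\Phi_n''(1)/\Phi_n(1)$; and the $k=3$ instance of Theorem \ref{thm:phi:1}, displayed in \eqref{formula:k=3}, supplies $\Phi_n'''(1)/\Phi_n(1)$ as a polynomial in $\varphi(n)$ and $\Psi(n)$.

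For $n\ge 2$ the quantity $\Phi_n'(1)/\Phi_n(1)=\varphi(n)/2$ is nonzero, so we may divide each of the higher-order ratios by it to obtain
\[
A:=\frac{\Phi_n''(1)}{\Phi_n'(1)}=\frac{\varphi(n)}{2}+\frac{\Psi(n)}{6}-1,\qquad B:=\frac{\Phi_n'''(1)}{\Phi_n'(1)},
\]
both polynomials in $\varphi(n)$ and $\Psi(n)$. Substituting into $\mathrm{S}(\Phi_n)(1)=B-\tfrac{3}{2}A^{2}$ and expanding (treating $\varphi(n)$ and $\Psi(n)$ as independent indeterminates), the cross terms $\varphi\Psi$ and the linear terms in $\varphi$ and in $\Psi$ produced by the two halves cancel exactly, leaving only the pure quadratic and constant contributions $-\varphi(n)^{2}/8-\Psi(n)^{2}/24+1/2$.

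The main (and only) obstacle is the arithmetic bookkeeping of these cancellations; no new ingredient beyond the Bell-polynomial evaluations of Theorem \ref{thm:phi:1} at $k\le 3$ is required. As an alternative that makes the cancellation more transparent, one may first note (by expanding $f'$ and $f''$ via the chain rule) the identity $\mathrm{S}(f)=L_{3}/L_{1}-L_{1}^{2}/2-\tfrac{3}{2}(L_{2}/L_{1})^{2}$, where $L_{k}:=(\log f)^{(k)}$, and then feed in the values $L_{1}=\varphi/2$, $L_{2}=-\varphi/2+\varphi\Psi/12$, $L_{3}=\varphi-\varphi\Psi/4$ supplied by Theorem \ref{thm:kth-log-deriv:1}; the common factor $\varphi(n)$ in $L_{2}$ and $L_{3}$ cancels against $L_{1}$ before any multiplication, so only a single squaring of $-1+\Psi(n)/6$ remains to be carried out.
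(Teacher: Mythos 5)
Your proposal is correct and follows essentially the same route as the paper: both form $\Phi_n''(1)/\Phi_n'(1)=\tfrac{1}{2}\varphi(n)+\tfrac{1}{6}\Psi(n)-1$ and $\Phi_n'''(1)/\Phi_n'(1)$ from Corollary \ref{phin1:deriv} and the formulas \eqref{formula:k=2}--\eqref{formula:k=3}, then substitute into the definition of the Schwarzian (your $L_k$ reformulation is just a cosmetic reorganization of the same cancellation). One caveat: the constant term $+1$ in \eqref{formula:k=3} as printed should be $+\varphi(n)$, and your stated final answer implicitly uses the corrected value, as does the paper's own proof.
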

\begin{proof}
Let $n \ge 2$ be an arbitrary integer.
From the identity 
$\Phi_n'(1)/\Phi_n(1)=\varphi(n)/2$ and the formulas \eqref{formula:k=2}
and \eqref{formula:k=3} for $\Phi_n''(1) / \Phi_n(1),$ 
respectively $\Phi_n'''(1) / \Phi_n(1),$  one finds that
$$\frac{\Phi^{''}_n(1)}{\Phi'_n(1)}=\frac{1}{2}
\varphi(n)+\frac{1}{6}\Psi(n)-1,$$
respectively
$$\frac{\Phi^{'''}_n(1)}{\Phi'_n(1)}=\frac{1}{4}
\varphi(n)^2+\frac{1}{4}\varphi(n)\Psi(n)-\frac{3}{2}\varphi(n)-\frac{\Psi(n)}{2}+2.$$
The proof follows 
from the latter two identities
and the definition of the 
Schwarzian.
\end{proof}
This expression for the Schwarzian is remarkably compact. Indeed, on changing the 
$3/2$ in the definition of the 
Schwarzian to any other number, the
number of terms in the resulting expression becomes larger than three.

\subsection{The $k^{th}$ logarithmic derivative of $\Phi_n$ at $- 1$}

In this section we use Theorem \ref{thm:kth-log-deriv:1} to evaluate the derivatives of $\log \Phi_n$ at $-1$. First, we need the following lemma. A less general version of this lemma appears in Lehmer's work \cite[Theorem 5]{Lehmer}.

\begin{lem} \label{lem:kth-deriv-x}
  Let $g : \mathbb{R}^+ \to \mathbb{R}$ be a $k$ times differentiable function. 
  Put $h_n=g(\Phi_n)$.
  For every $k \ge 0$ and $n \ge 3$ we have 
  \[ h_n^{(k)}(x) = (-1)^k h_{n \alpha_n}^{(k)}(-x), \]
  with
  \begin{equation} \label{alpha}
    \alpha_n = \begin{cases}2 & \hbox{if } n \hbox{ is odd};\\ 1/2 & \hbox{if } 2 \parallel n;\\ 1 & \hbox{otherwise.}\end{cases}
  \end{equation}
\end{lem}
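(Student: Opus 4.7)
The plan is to reduce the statement to a purely algebraic functional equation for $\Phi_n$ itself. Note that by the chain rule, for any $k$ times differentiable $F$ one has $\frac{d^k}{dx^k} F(-x) = (-1)^k F^{(k)}(-x)$. So the claimed identity $h_n^{(k)}(x) = (-1)^k h_{n\alpha_n}^{(k)}(-x)$ is equivalent to the pointwise identity $h_n(x) = h_{n\alpha_n}(-x)$. Since $h_m = g \circ \Phi_m$ with $g$ arbitrary, it is enough to verify
\[
\Phi_n(x) = \Phi_{n\alpha_n}(-x) \qquad (n \ge 3),
\]
after which the lemma follows simply by differentiating $k$ times.

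To establish this identity I would split into the three cases defining $\alpha_n$ in \eqref{alpha} and invoke the corresponding parts of Lemma \ref{basiceqs}. If $n$ is odd, then $n\alpha_n = 2n$, and Lemma \ref{basiceqs}(3) gives $\Phi_{2n}(y) = (-1)^{\varphi(n)} \Phi_n(-y)$; since $n \ge 3$ is odd, $\varphi(n)$ is even, so $\Phi_{2n}(y) = \Phi_n(-y)$, and setting $y = -x$ yields exactly $\Phi_n(x) = \Phi_{2n}(-x)$. If $2 \parallel n$, then $n \ge 6$ and $m := n/2 \ge 3$ is odd; applying Lemma \ref{basiceqs}(3) to $m$ gives $\Phi_n(x) = \Phi_{2m}(x) = (-1)^{\varphi(m)} \Phi_m(-x) = \Phi_{n/2}(-x)$, again because $\varphi(m)$ is even. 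Finally, if $4 \mid n$, then $n\alpha_n = n$ and the identity $\Phi_n(x) = \Phi_n(-x)$ is precisely Lemma \ref{basiceqs}(4).

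The main (indeed only) point requiring care is ensuring that the sign factor $(-1)^{\varphi(\cdot)}$ appearing in Lemma \ref{basiceqs}(3) is $+1$ wherever it is invoked. This is where the hypothesis $n \ge 3$ is used: in the first case $n$ itself is $\ge 3$ and odd, in the second case $n/2$ is $\ge 3$ and odd, and in both situations the relevant Euler totient is even. No other subtlety arises, and the proof concludes by differentiating the functional equation $k$ times and applying the chain rule noted above.
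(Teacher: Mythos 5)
Your proposal is correct and follows essentially the same route as the paper: both reduce the claim to the three identities $\Phi_n(x)=\Phi_{2n}(-x)$, $\Phi_n(x)=\Phi_{n/2}(-x)$ and $\Phi_n(x)=\Phi_n(-x)$ from Lemma \ref{basiceqs} according to the three cases defining $\alpha_n$, and then differentiate $k$ times; you merely spell out in more detail why the sign $(-1)^{\varphi(\cdot)}$ is $+1$ when $n\ge 3$. The only (minor) point the paper adds that you omit is the observation that $\Phi_n(x)>0$ for all real $x$ when $n\ge 3$, which is needed so that $h_n=g(\Phi_n)$ is defined given that $g$ has domain $\mathbb{R}^+$.
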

\begin{proof}
  Note that $\Phi_n(x) > 0$ for every $x \in \mathbb{R}$ and $n \ge 3$. We distinguish three cases:
  \begin{enumerate}
    \item $2 \nmid n$. We have $\Phi_n(x) = \Phi_{2n}(-x)$. Hence $h_n^{(k)}(x) = (-1)^{k}h_{2n}^{(k)}(-x)$ and $\alpha_n=2$.
    \item $2 \parallel n$. We have $\Phi_n(x) = \Phi_{n/2}(-x)$. Hence $h_n^{(k)}(x) = (-1)^{k}h_{n/2}^{(k)}(-x)$ and $\alpha_n=1/2$.
    \item $4 \mid n$. We have $\Phi_n(x) = \Phi_n(-x)$. Hence $h_n^{(k)}(x) = (-1)^kh_n^{(k)}(-x)$ and $\alpha_n=1$. \qedhere
  \end{enumerate}
\end{proof}
 
 Note that $(\log \Phi_1)^{(k)}(-1) = - (k-1)! / 2^k = (-1)^k(\log \Phi_2)^{(k)}(1)$. One can use this observation and Lemma \ref{lem:kth-deriv-x} with $g = \log$ to easily compute $(\log \Phi_n)^{(k)}(-1)$ from Theorem \ref{thm:kth-log-deriv:1}.
\begin{Thm} \label{thm:kth-log-deriv:-1}
  Let $k$ and $n$ be positive integers with $n \ne 2$. Then
  \[ (\log \Phi_n)^{(k)}(-1) = (-1)^{k-1} (k-1)! s_k(n \alpha_n) =  (-1)^k \sum_{j=1}^{k} \frac{B_j^+ s(k,j)}{j} J_j(n \alpha_n), \]
  with $\alpha_n$ as in \eqref{alpha}.
\end{Thm}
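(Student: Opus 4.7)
The strategy is exactly the one telegraphed in the paragraph preceding the theorem: transfer the problem from $x=-1$ to $x=1$ via Lemma \ref{lem:kth-deriv-x}, and then invoke Theorem \ref{thm:kth-log-deriv:1}. For $n \ge 3$, I would take $g = \log$ in Lemma \ref{lem:kth-deriv-x}, which is legitimate since $\Phi_n(x) > 0$ on $\mathbb{R}$ in this range, and evaluate the resulting identity at $x = 1$ to obtain
\[ (\log \Phi_n)^{(k)}(-1) = (-1)^k (\log \Phi_{n\alpha_n})^{(k)}(1). \]
The case $n = 1$ is not covered by the lemma, so I would dispose of it separately using the direct computation already recorded in the excerpt: $\Phi_1(x) = x-1$ gives $(\log \Phi_1)^{(k)}(-1) = -(k-1)!/2^k$, which equals $(-1)^k (\log \Phi_2)^{(k)}(1)$ because $\Phi_2(x) = x+1$. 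Note $n = 2$ is properly excluded, since $\Phi_2(-1) = 0$ means no logarithmic derivative exists there.

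Next I would verify in each of the three subcases of \eqref{alpha} that $n \alpha_n \ge 2$ when $n \ne 2$: if $n$ is odd then $n\alpha_n = 2n \ge 2$; if $2 \parallel n$ and $n \ne 2$ then $n \ge 6$, so $n\alpha_n = n/2 \ge 3$; if $4 \mid n$ then $n\alpha_n = n \ge 4$. With this in hand, identity \eqref{eq:sk} applied to $m = n\alpha_n$ yields
\[ (\log \Phi_{n\alpha_n})^{(k)}(1) = -(k-1)!\, s_k(n\alpha_n), \]
and combined with the previous display this produces the middle expression $(-1)^{k-1}(k-1)!\, s_k(n\alpha_n)$ of the theorem.

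Finally, to reach the closed-form right-hand side, I would invoke Theorem \ref{thm:kth-log-deriv:1} with $n$ replaced by $n\alpha_n \ge 2$, obtaining
\[ (\log \Phi_{n\alpha_n})^{(k)}(1) = \sum_{j=1}^{k} \frac{B_j^+ s(k,j)}{j} J_j(n\alpha_n), \]
and multiplying by $(-1)^k$ finishes the proof. There is really no conceptual obstacle; the only things demanding care are the small-$n$ bookkeeping (handling $n=1$ by hand because Lemma \ref{lem:kth-deriv-x} is stated for $n \ge 3$, and explicitly ruling out $n=2$), together with the routine verification that $n\alpha_n \ge 2$ in every relevant parity case so that Theorem \ref{thm:kth-log-deriv:1} is applicable.
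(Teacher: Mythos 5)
Your proposal is correct and follows exactly the route the paper sketches in the paragraph preceding the theorem: apply Lemma \ref{lem:kth-deriv-x} with $g=\log$ to reduce to the point $1$, handle $n=1$ by the direct computation $(\log\Phi_1)^{(k)}(-1)=-(k-1)!/2^k=(-1)^k(\log\Phi_2)^{(k)}(1)$, and then invoke \eqref{eq:sk} and Theorem \ref{thm:kth-log-deriv:1} for $n\alpha_n\ge 2$. Your explicit check that $n\alpha_n\ge 2$ in each parity case is a detail the paper leaves implicit but is entirely in the spirit of its argument.
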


Now on applying Fa\`a di Bruno's formula 
\eqref{eq:faa-di-bruno:exp} to $\Phi_n = \exp(\log \Phi_n)$ for $n \ge 3$ and $-\Phi_1 = \exp(\log(-\Phi_1))$ 
we obtain the following 
analogue of Theorem \ref{thm:phi:1}.

\begin{Thm} \label{thm:kth-deriv:-1}
  Let $k$ and $n$ be positive integers with $n \ne 2$. Then
\[ \frac{\Phi_n^{(k)}(-1)}{\Phi_n(-1)}= \mathcal{B}_k\Big(-\sum_{j=1}^{1} \frac{B_j^+ s(1,j)}{j} J_j(n \alpha_n), \ldots, (-1)^k\sum_{j=1}^{k} \frac{B_j^+ s(k,j)}{j} J_j(n \alpha_n)\Big), \]
  with $\alpha_n$ as in \eqref{alpha}.
\end{Thm}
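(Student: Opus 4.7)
The strategy mirrors that of Theorem \ref{thm:phi:1} step for step, with $x=1$ replaced by $x=-1$ and $n$ replaced by $n\alpha_n$ inside the Jordan totients. I apply the exponential form of Fa\`a di Bruno's formula, equation \eqref{eq:faa-di-bruno:exp}, to $h=\Phi_n$ at $x=-1$ to obtain
\[ \frac{\Phi_n^{(k)}(-1)}{\Phi_n(-1)}=\mathcal{B}_k\bigl((\log \Phi_n)'(-1),\ldots,(\log \Phi_n)^{(k)}(-1)\bigr), \]
and then substitute the closed form $(\log \Phi_n)^{(i)}(-1) = (-1)^i\sum_{j=1}^{i} B_j^+ s(i,j) J_j(n\alpha_n)/j$ already furnished by Theorem \ref{thm:kth-log-deriv:-1} into each argument of $\mathcal{B}_k$. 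The claimed identity drops out immediately.

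For $n\ge 3$ the positivity hypothesis $h(-1)>0$ in \eqref{eq:faa-di-bruno:exp} is automatic, since Lemma \ref{valueat-1} gives $\Phi_n(-1)\in\{1\}\cup\{p:p\text{ prime}\}$. The only subtlety, and the place I expect the reader to pause, is the case $n=1$: here $\Phi_1(-1)=-2<0$, so I cannot apply \eqref{eq:faa-di-bruno:exp} to $h=\Phi_1$ directly. The workaround is to apply it instead to $h=-\Phi_1$, which is positive at $-1$. The logarithmic derivatives are unaffected by this sign flip, because $(\log(-\Phi_1))^{(j)}=((-\Phi_1)'/(-\Phi_1))^{(j-1)}=(\Phi_1'/\Phi_1)^{(j-1)}=(\log \Phi_1)^{(j)}$, so the arguments of $\mathcal{B}_k$ are exactly those given by Theorem \ref{thm:kth-log-deriv:-1}. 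Likewise the outer sign cancels, since $(-\Phi_1)^{(k)}(-1)/(-\Phi_1)(-1)=\Phi_1^{(k)}(-1)/\Phi_1(-1)$. One may sanity-check consistency by noting $\alpha_1=2$ and $J_j(2)=2^j-1$.

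The case $n=2$ is excluded precisely because $\Phi_2(-1)=0$ makes the left-hand side undefined; outside that degenerate situation no new computation is needed beyond what has already been done for Theorem \ref{thm:kth-log-deriv:-1}, so the proof is essentially a one-line substitution once the $n=1$ sign-flip has been dispatched.
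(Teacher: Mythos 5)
Your proposal is correct and follows exactly the paper's route: the paper likewise obtains this theorem by applying the exponential form of Fa\`a di Bruno's formula \eqref{eq:faa-di-bruno:exp} to $\Phi_n = \exp(\log\Phi_n)$ for $n \ge 3$ and to $-\Phi_1 = \exp(\log(-\Phi_1))$ for $n=1$, then substituting the values of $(\log\Phi_n)^{(j)}(-1)$ from Theorem \ref{thm:kth-log-deriv:-1}. Your handling of the $n=1$ sign flip and the observation that the logarithmic derivatives and the ratio $\Phi_1^{(k)}(-1)/\Phi_1(-1)$ are unaffected is precisely the point the paper relies on.
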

Taking $k = 2$ in this result gives the following corollary.
\begin{cor} 
  For $n=1$ or $n > 2$ we have
  \[ \frac{\Phi_n''(-1)}{\Phi_n(-1)} =  \frac{\varphi(n)}{4} \left(\varphi(n) + \frac{1}{3}\Psi(n \alpha_n) - 2\right). \]
\end{cor}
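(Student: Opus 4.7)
The plan is to apply Theorem \ref{thm:kth-deriv:-1} with $k = 2$. From the recurrence \eqref{eq:bell:recurrence} one computes $\mathcal{B}_2(x_1, x_2) = x_1^2 + x_2$, so the theorem reads
\[\frac{\Phi_n''(-1)}{\Phi_n(-1)} = \left(-\frac{B_1^+ s(1,1)}{1}\,J_1(n\alpha_n)\right)^{\!2} + \sum_{j=1}^{2}\frac{B_j^+ s(2,j)}{j}\,J_j(n\alpha_n).\]
Plugging in the standard values $s(1,1)=1$, $s(2,1)=-1$, $s(2,2)=1$, $B_1^+=1/2$, $B_2^+=1/6$, and using $J_1 = \varphi$ together with the identity $J_2 = \varphi\Psi$ (both recalled in Section \ref{gejo}), the right-hand side becomes
\[\frac{1}{4}\varphi(n\alpha_n)^2 \;-\; \frac{1}{2}\varphi(n\alpha_n) \;+\; \frac{1}{12}\varphi(n\alpha_n)\,\Psi(n\alpha_n).\]

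The key observation needed to match the target is that $\varphi(n\alpha_n) = \varphi(n)$ for every positive integer $n$. This is a one-line case analysis from the definition \eqref{alpha}: for odd $n$, $\varphi(2n) = \varphi(2)\varphi(n) = \varphi(n)$; for $n = 2m$ with $m$ odd, $\varphi(n/2) = \varphi(m) = \varphi(n)$ (since $\varphi(2) = 1$); and for $4 \mid n$ one has $n\alpha_n = n$. Substituting this into the displayed expression and factoring out $\varphi(n)/4$ yields
\[\frac{\varphi(n)}{4}\left(\varphi(n) + \frac{\Psi(n\alpha_n)}{3} - 2\right),\]
as claimed.

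There is no genuine obstacle here: the only non-mechanical step is the verification $\varphi(n\alpha_n) = \varphi(n)$, and it drops out immediately. The formula also covers $n = 1$ uniformly, since $\alpha_1 = 2$ gives $\varphi(n\alpha_n) = \varphi(2) = 1 = \varphi(1)$ and $\Psi(2) = 3$, so the right-hand side evaluates to $\tfrac{1}{4}(1 + 1 - 2) = 0$, in agreement with the direct computation $\Phi_1(x) = x - 1$, $\Phi_1''(-1)/\Phi_1(-1) = 0$.
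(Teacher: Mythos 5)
Your proof is correct and follows essentially the same route as the paper: expand $\mathcal{B}_2(x_1,x_2)=x_1^2+x_2$ with the arguments from Theorem \ref{thm:kth-deriv:-1}, then use $\varphi(n\alpha_n)=\varphi(n)$ and $J_2=\varphi\Psi$ to factor out $\varphi(n)/4$. Your version is in fact slightly more careful than the paper's (which writes $J_1(n\alpha_n)$ where $J_2(n\alpha_n)$ is meant in the intermediate display) and your explicit check of the $n=1$ case is a harmless bonus.
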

\begin{proof}
 Since $\mathcal{B}_2(x_1, x_2) = x_1^2 + x_2$, we have
\[\frac{\Phi_n''(-1)} {\Phi_n(-1)} = \frac{1}{4}\varphi(n \alpha_n)^2 - \frac{1}{2}\varphi(n \alpha_n) + \frac{1}{12}J_1(n \alpha_n) = \frac{\varphi(n)}{4} \left( \varphi(n) + \frac{1}{3}\Psi(n \alpha_n) -2 \right), \] 
where we used that $\varphi(n \alpha_n) = \varphi(n)$.
\end{proof}
It is also possible to give a  recurrence relation for $\Phi_n^{(k)}(-1)$ as we did in Theorem \ref{thm:kth-deriv:1:recurrence}. Doing so we leave to the interested reader.

\subsection{Average behavior of higher order derivatives of 
cyclotomic polynomials} 
Let $f(x)\in \mathbb Z[x]$ be a polynomial and let $\deg f$ denote its degree with respect to $x$. 
For any complex number $z$ such that $f(z)\ne 0$, we define
\begin{equation}
N^{(k)}(z)=\frac{1}{(\deg f)^k}\frac{f^{(k)}(z)}{f(z)}.
\end{equation}
In case 
$f(x)\in \mathbb Z_{\ge 0}[x]$, $z>0$ is real and $f(z)\ne 0$,
it is easy to show by induction that $N^{(k)}(z) \le 1.$ 
This property suggests to 
call $N^{(k)}(z)$ the normalized $k^{th}$ derivative of $f$ at $z$ and
leads to the following problem.
\begin{Problem} Let $z$ be given. Let $\mathcal F$ be an 
infinite family of polynomials $f$ with $f(z)\ne 0$. Study the 
average behavior and value distribution of  $N^{(k)}(z)$ in the family $\mathcal F$.
\end{Problem}
Moree et al. \cite{MSSS} consider this problem in depth. They take $\mathcal F$ to be the family
of cyclotomic polynomials and $z \in \{-1,0,1\}$ and make crucial use of
the results in this paper to express the quantities under
consideration as linear combinations of Jordan totient quotients.
E.g., Theorem \ref{thm:kth-deriv:1} expresses 
${\Phi_n^{(k)}(1)}/{\Phi_n(1)}$ as a $\mathbb Q$-linear 
combination of Jordan totient products of weight 
not exceeding $k,$ which
 has as a consequence that the average 
$$\lim_{x\rightarrow \infty}\frac{1}{x}\sum_{n\le x}
\frac{1}{\varphi(n)^k}\frac{\Phi_n^{(k)}(1)}{\Phi_n(1)}$$
exists, as the argument of the sum is a finite $\mathbb Q$-linear combination of Jordan totient quotients of non-negative weight, 
which by \cite{MSSS} are each constant on average.
Likewise, several other quantities in this paper, after appropriate normalization, can be shown
to be constant on average (actually usually far more precise results than being constant on average
can be formulated).

\section{Formulas for the coefficients of cyclotomic polynomials}
\label{sec:coefficientformulas}

In this section we gather all formulas known to us for the coefficients $a_n(k)$ with $n$ arbitrary. 
There are also identities for restricted subsets of the integers,
such as binary and ternary integers (these are composed of 
precisely two, respectively precisely three distinct odd prime factors), 
but we will not consider them here. 
The binary formula is folklore, see, e.g., \cite{LaLe}.
A ternary formula that found a lot of applications is
due to Kaplan \cite{Kaplan}. 

Let $n>1$ be arbitrary. Note that $\Phi_n^{(k)}(0)=k!a_n(k)$ for $k\le \varphi(n)$. We put
$a_n(k)=0$ for $k>\varphi(n)$ so that, in light of \eqref{phimoebius}, we can write 
\begin{equation}
  \label{infproduct}
  \prod_{d=1}^{\infty}(1-x^d)^{\mu(n/d)}=\sum_{j=0}^{\infty}a_n(j)x^j,
\end{equation}
where we define $\mu(t) = 0$ if $t$ is not an integer. Now note that, for $|x|<1$, we have
\begin{equation*}
  \label{thanga2}
  (1-x^d)^{\mu({n/d})}=1-\mu(n/d)x^d+\frac{1}{2}\mu(n/d)\left(\mu(n/d)-1\right)\sum_{j=2}^{\infty}x^{jd}.
\end{equation*}
We infer that
\begin{equation} \label{lehmertje}
\prod_{d=1}^{\infty}(1-x^d)^{\mu({n/d})}=\prod_{d=1}^{\infty}\Bigg(1-\mu(n/d)x^d+
\frac{1}{2}\mu(n/d)\left(\mu(n/d)-1\right)\sum_{j=2}^{\infty}x^{jd}\Bigg),
\end{equation}

From \eqref{infproduct} and \eqref{lehmertje} we see
that $a_n(k)$ is just the coefficient of $x^k$ in the right-hand side of (\ref{lehmertje}). Therefore, we obtain the already well-known values
\begin{equation} \label{4cases}
  \begin{cases}
    \Phi_n(0)=1; \\
    \Phi'_n(0)=-\mu(n); \\
    \Phi''_n(0)=\mu(n)^2-\mu(n)-2\mu(n/2); \\
    \Phi'''_n(0)=3\mu(n)^2-3\mu(n)+6\mu(n/2)\mu(n)-6\mu(n/3).
  \end{cases}
\end{equation}

One can easily check that
\begin{equation}
  \label{veertien}
  \binom{\mu(n/d)}{j} := \frac{\mu(n/d)^{\underline{j}}}{j!} = \begin{cases}1 & \text{if~}j=0;\cr
  \mu(n/d) &\text{if~}j=1;\cr
  (-1)^j \mu(n/d)(\mu(n/d)-1)/2 &\text{if~}j\ge 2.\end{cases}
\end{equation}
As a consequence, we obtain the following result of H. M\"oller \cite[Satz 2]{HerbertM} (who gave a much
more complicated proof).
\begin{Thm} \label{thm:moller}
  For any non-negative integers $n$ and $k$ with $n \ge 1$ we have
  \begin{equation*}
    a_n(k) = \sum_{(\lambda_1, \ldots, \lambda_k) \in {\mathcal P}(k)} \prod_{j=1}^k (-1)^{\lambda_j} \binom{\mu(n/j)}{\lambda_j}.
  \end{equation*}
\end{Thm}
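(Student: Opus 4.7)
The plan is to identify $a_n(k)$ directly with a coefficient in the formal power series expansion of (\ref{infproduct}), using the generalized binomial theorem. All of the necessary setup---rewriting $\Phi_n$ as an infinite Möbius product and the computation of $\binom{\mu(n/d)}{j}$ in (\ref{veertien})---has already been done in the excerpt, so the proof reduces to a direct coefficient extraction.

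First, I would apply the generalized binomial theorem to each factor in (\ref{infproduct}):
\[
(1 - x^d)^{\mu(n/d)} = \sum_{\lambda_d = 0}^{\infty} (-1)^{\lambda_d} \binom{\mu(n/d)}{\lambda_d} x^{d\lambda_d}.
\]
Since $\mu(n/d) \in \{-1, 0, 1\}$ in all cases, the coefficients here agree with the explicit values listed in (\ref{veertien}), so this is just the power series form underlying (\ref{lehmertje}). Next, I would multiply these series together over all $d \ge 1$; at the level of formal power series this is legitimate because modulo $x^{k+1}$ only the factors with $d \le k$ can contribute a nontrivial term. Collecting terms, the coefficient of $x^k$ in the resulting product equals
\[
\sum \, \prod_{d = 1}^{\infty} (-1)^{\lambda_d} \binom{\mu(n/d)}{\lambda_d},
\]
where the outer sum ranges over all sequences of non-negative integers $(\lambda_d)_{d \ge 1}$ satisfying $\sum_{d} d\lambda_d = k$. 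The constraint forces $\lambda_d = 0$ for $d > k$, so the infinite product truncates to $d \in \{1, \ldots, k\}$ and the index set is precisely $\mathcal{P}(k)$. Comparing with $\sum_{j} a_n(j) x^j$ on the left side of (\ref{infproduct}) then yields the claimed identity.

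The only step needing any care is the interchange of the infinite product with coefficient extraction, but as noted this is routine for formal power series and presents no genuine obstacle. As a minor sanity check one observes that for each $d$ with $\mu(n/d) = 0$ (in particular when $d \nmid n$) the binomial $\binom{\mu(n/d)}{\lambda_d}$ vanishes unless $\lambda_d = 0$, so the summation automatically restricts itself to partitions supported on divisors of $n$, matching the restricted product $\prod_{d \mid n}$ in (\ref{phimoebius}).
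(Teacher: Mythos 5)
Your proof is correct and follows essentially the same route as the paper: expand each factor $(1-x^d)^{\mu(n/d)}$ of \eqref{infproduct} as a power series whose coefficients are the signed binomials of \eqref{veertien} (the paper does this via the explicit three-case expansion \eqref{lehmertje}, you via the generalized binomial theorem, which is the same thing), then read off the coefficient of $x^k$ in the product. Your remarks on truncation to $d\le k$ and on the vanishing of terms with $\mu(n/d)=0$ are accurate and consistent with the paper's treatment.
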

This result in combination with \eqref{veertien} gives a systematic way of expressing $\Phi_n^{(k)}(0)$ as in \eqref{4cases}. 
Another possible procedure is the one described by Lehmer in \cite[Section 2]{Lehmer}, who used Proposition \ref{prop:nicol} to find a formula for $\Phi_n^{(k)}(0)$ and, thus, $a_n(k)$. 
Let $k \ge 1$ and $n\ge 2$ be integers. For $k = 1$ we obtain 
\[ \Phi_n'(0) = \frac{\Phi_n'(0)}{\Phi_n(0)} = (\log \Phi_n)'(0) = -\mu(n), \]
which recovers one of the equalities given in \eqref{4cases}. Note that $\Phi_n$ is the composition of $\exp$ with $(\log \Phi_n)$ and hence Fa\`a di Bruno's formula allows one to generalize this argument. 
This leads to Lehmer's formula for $a_n(k)$, 
\begin{equation} \label{eq:lehmer:0}
  a_n(k) = \frac{\Phi_n^{(k)}(0)}{k!} = \sum_{(\lambda_1, \ldots, \lambda_k) \in \mathcal{P}(k)} \prod_{j = 1}^k \frac{1}{\lambda_j ! } \left(\frac{-r_j(n)}{j}\right)^{\lambda_j}.
\end{equation}

Lehmer applies the previous formula along with \eqref{eq:kluyver} to find the coefficients $a_n(k)$ for $k \le 10$. Nonetheless, 
in the words of Lehmer, ``this formula is of little immediate help in studying the coefficients''. If one applies Fa\`a di Bruno's formula in terms of Bell polynomials, then the obtained formula is more appealing. 

\begin{Thm} \label{thm:coeffs:0}
  Let $k$ and $n$ be positive integers with $n \ge 2$. Then
  \begin{equation*}
    \Phi_n^{(k)}(0) = \mathcal{B}_k(-r_1(n), \dots, -(k-1)! r_k(n)).
  \end{equation*}
\end{Thm}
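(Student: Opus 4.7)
The plan is essentially to specialize the general identity \eqref{eq:faa-di-bruno:exp} (the Fa\`a di Bruno-based formula that expresses $h^{(k)}$ via the complete Bell polynomial in the logarithmic derivatives of $h$) to $h = \Phi_n$ at the point $x = 0$, and then plug in the values of $(\log \Phi_n)^{(j)}(0)$ supplied by Proposition \ref{prop:nicol}. This is the exact analogue at $x = 0$ of what Theorem \ref{thm:phi:1} does at $x = 1$, so no new ideas are required beyond reassembling pieces already in place.

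More concretely, I would proceed in three short steps. First, observe that $\Phi_n(0) = 1$ for every $n \ge 2$ (this is immediate from \eqref{infproduct} or from the first line of \eqref{4cases}), which ensures in particular that the hypothesis $h(0) > 0$ needed to apply \eqref{eq:faa-di-bruno:exp} is satisfied. Second, apply \eqref{eq:faa-di-bruno:exp} with $h = \Phi_n$ and $x = 0$ to obtain
\[
\Phi_n^{(k)}(0) \;=\; \Phi_n(0)\,\mathcal{B}_k\bigl((\log \Phi_n)'(0),\,(\log \Phi_n)''(0),\,\ldots,\,(\log \Phi_n)^{(k)}(0)\bigr).
\]
Third, substitute the evaluation $(\log \Phi_n)^{(j)}(0) = -(j-1)!\,r_j(n)$ from Proposition \ref{prop:nicol} into each slot of the complete Bell polynomial, and use $\Phi_n(0)=1$ to drop the leading factor. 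The resulting identity is exactly the statement of Theorem \ref{thm:coeffs:0}.

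There is no real obstacle here: the only thing to double-check is that the logarithmic-derivative convention used in \eqref{eq:faa-di-bruno:exp} matches the one adopted in Proposition \ref{prop:nicol}, which is the case since the paper has already clarified in the subsection on the logarithmic derivative that $(\log f)^{(k)}$ always means $(f'/f)^{(k-1)}$ and so no genuine logarithm (and no branch or sign issue) is involved. One could alternatively derive the statement by expanding $\mathcal{B}_k$ via its definition to recover Lehmer's formula \eqref{eq:lehmer:0}, but the point of phrasing the result in terms of Bell polynomials is precisely that this single-line derivation is cleaner and, when combined with the recurrence \eqref{eq:bell:recurrence}, immediately yields a practical recursion for $\Phi_n^{(k)}(0)$ analogous to Theorem \ref{thm:kth-deriv:1:recurrence}.
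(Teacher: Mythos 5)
Your proposal is correct and follows exactly the route the paper intends: apply the Fa\`a di Bruno identity \eqref{eq:faa-di-bruno:exp} to $\Phi_n=\exp(\log\Phi_n)$ at $x=0$, note $\Phi_n(0)=1$ for $n\ge 2$, and insert the values $(\log\Phi_n)^{(j)}(0)=-(j-1)!\,r_j(n)$ from Proposition \ref{prop:nicol}. Nothing is missing.
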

Complete Bell polynomials can be computed in several ways, e.g., recursively, see \eqref{eq:bell:recurrence}. Therefore, Theorem \ref{thm:coeffs:0} is more practical than \eqref{eq:lehmer:0}. Indeed, from \eqref{eq:bell:recurrence} it follows that
\begin{align*}
  a_n(k) & = \frac{1}{k!} \mathcal{B}_{k}(-r_1(n), \dots, -(k-1)! r_k(n)) \\
         & = - \frac{1}{k!} \sum _{j=1}^{k}(j-1)! r_j(n) \binom{k-1}{j-1} \mathcal{B}_{k-j}(-r_1(n), \dots, -(k-1-j)! r_{k-j}(n)) \\
         & = - \frac{1}{k}  \sum _{j=1}^{k} r_j(n) a_n(k-j).
\end{align*}
Equivalently, $a_n(k)$ obeys the recurrence relation
\begin{equation*} \label{eq:recurrence}
  a_n(k) = -\frac{1}{k}\sum_{j = 0}^{k-1} a_n(j) r_{k-j}(n) 
\end{equation*}
for every positive integer $k \le \varphi(n)$. This result was noticed first by Grytczuk and Tropak \cite{recurrence}. As Grytczuk and Tropak point out, this recurrence yields an efficient algorithm to compute $a_n(k)$ when $k$ is small. 
Gallot et al.~\cite[Section 9]{GMH} generalized this algorithm so that it also allows one to compute coefficients
of inverse cyclotomic polynomials efficiently.
We refer the reader to \cite{AM} for a survey of algorithms that compute the whole polynomial $\Phi_n$.

Now we use Theorem \ref{thm:phi:1} to find a formula for $a_n(k)$. We have
\begin{equation*}
\begin{aligned}
  \Phi_n(x) & = \sum_{t = 0}^{\varphi(n)} \frac{\Phi_n^{(t)}(1)}{t !} (x - 1)^t =  \sum_{t = 0}^{\varphi(n)} \sum_{k = 0}^t  \frac{\Phi_n^{(t)}(1)}{t!} \binom{t}{k}(-1)^{t-k} x^k \\
            & = \sum_{k = 0}^{\varphi(n)} \frac{x^k}{k!} \sum_{t = k}^{\varphi(n)} \frac{(-1)^{t-k}}{(t-k)!}\Phi_n^{(t)}(1).
\end{aligned}
\end{equation*}
Combining the previous identity with the value of $\Phi_n^{(k)}(1)$ given in Theorem \ref{thm:phi:1} yields the following result.
\begin{Thm} \label{thm:phi:coeffs} For every positive integers $n$ and $k$ with $n \ge 2$ and $k \le \varphi(n)$ we have
  \begin{equation*} 
     a_n(k) = \frac{e^{\Lambda(n)}}{k!} \sum_{t = k}^{\varphi(n)} \frac{(-1)^{t-k}}{(t-k)!} \mathcal{B}_t\Big(\sum_{j=1}^{1} \frac{B_j^+ s(1,j)}{j} J_j(n), \ldots,\sum_{j=1}^{t} \frac{B_j^+ s(t,j)}{j} J_j(n)\Big).
 \end{equation*}
\end{Thm}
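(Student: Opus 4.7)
The statement is essentially a bookkeeping corollary packaging three ingredients that the paper has already established: (i) the Taylor expansion of $\Phi_n(x)$ about $x=1$, (ii) Theorem \ref{thm:phi:1}, which expresses $\Phi_n^{(t)}(1)/\Phi_n(1)$ as a complete Bell polynomial in Jordan totient values, and (iii) Lemma \ref{valueat1A}, which states that $\Phi_n(1)=e^{\Lambda(n)}$ for $n>1$. The plan is therefore to expand about the point $1$, extract the coefficient of $x^k$ by a binomial expansion of $(x-1)^t$, and then substitute.

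First I would write down the Taylor expansion
\[
\Phi_n(x)=\sum_{t=0}^{\varphi(n)}\frac{\Phi_n^{(t)}(1)}{t!}(x-1)^t,
\]
which is exact since $\Phi_n$ is a polynomial of degree $\varphi(n)$. Expanding $(x-1)^t=\sum_{k=0}^{t}\binom{t}{k}(-1)^{t-k}x^k$ and interchanging the order of summation (finite sums, so this is automatic) gives
\[
\Phi_n(x)=\sum_{k=0}^{\varphi(n)}\frac{x^k}{k!}\sum_{t=k}^{\varphi(n)}\frac{(-1)^{t-k}}{(t-k)!}\Phi_n^{(t)}(1),
\]
which is exactly the computation displayed just above the theorem statement. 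Comparing coefficients with $\Phi_n(x)=\sum_k a_n(k)x^k$ yields
\[
a_n(k)=\frac{1}{k!}\sum_{t=k}^{\varphi(n)}\frac{(-1)^{t-k}}{(t-k)!}\Phi_n^{(t)}(1).
\]

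Next I would invoke Theorem \ref{thm:phi:1} to replace $\Phi_n^{(t)}(1)$ by
\[
\Phi_n(1)\,\mathcal{B}_t\Bigl(\sum_{j=1}^{1}\tfrac{B_j^{+}s(1,j)}{j}J_j(n),\ldots,\sum_{j=1}^{t}\tfrac{B_j^{+}s(t,j)}{j}J_j(n)\Bigr),
\]
then pull the common factor $\Phi_n(1)$ outside the $t$-sum and finally replace it by $e^{\Lambda(n)}$ via Lemma \ref{valueat1A} (the hypothesis $n\geq 2$ is exactly what is needed there). This produces the asserted formula.

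There is no genuine obstacle here; the only thing to double-check is that the Bell polynomial identity from Theorem \ref{thm:phi:1} is applied in the stated form for $t\geq 1$, and that the $t=0$ term in the inner sum contributes $\Phi_n(1)\cdot 1$, consistent with the convention $\mathcal{B}_0=1$ used in \eqref{eq:bell:recurrence}; since $\mathcal{B}_0$ does not appear for $t\geq k\geq 1$, the formula as displayed only needs $t\geq 1$, so no edge-case adjustment is necessary. The substitution is otherwise a direct chaining of the three earlier results, and writing out the final expression completes the proof.
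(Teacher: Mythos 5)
Your proposal is correct and follows essentially the same route as the paper: the displayed computation just before the theorem is exactly your Taylor expansion about $x=1$ with the binomial expansion of $(x-1)^t$ and interchange of finite sums, after which the paper likewise substitutes Theorem \ref{thm:phi:1} for $\Phi_n^{(t)}(1)$ and Lemma \ref{valueat1A} for $\Phi_n(1)=e^{\Lambda(n)}$. Your remark about the $t=0$ edge case being vacuous for $k\ge 1$ is accurate and requires no further adjustment.
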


An analogous result to Theorem \ref{thm:phi:coeffs} can be derived
by developing $\Phi_n(x)$ as a Taylor series around
$x=-1$ and invoking Theorem \ref{thm:kth-deriv:-1}.

There is also an interpretation in simplicial homology of the cyclotomic coefficients due to Musiker and Reiner \cite{MR}. We will not (re)consider their result here.

\section{Kronecker polynomials} \label{sec:kronecker}

A \emph{Kronecker polynomial} is a monic polynomial with integer coefficients having all 
of its roots on or inside the unit disc. The following result of Kronecker relates Kronecker polynomials with cyclotomic polynomials.

\begin{lem}[Kronecker, 1857; cf. \cite{Kronecker}] \label{kroneckerpols}
If f is a Kronecker polynomial with $f(0) \ne 0$, then all roots of f are actually on the unit circle and f factorizes over the rationals as a product of cyclotomic polynomials.
\end{lem}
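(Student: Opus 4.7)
The plan is to use the classical Kronecker argument via iteration of roots. Write $f(x) = \prod_{i=1}^{n}(x - \alpha_i)$ over $\mathbb{C}$ with each $|\alpha_i| \le 1$, and note that $f(0) \ne 0$ forces $\alpha_i \ne 0$ for all $i$. For each positive integer $k$ define the polynomial
\[ f_k(x) = \prod_{i=1}^{n}(x - \alpha_i^k). \]
The first step is to observe that $f_k \in \mathbb{Z}[x]$. Indeed the coefficients of $f_k$ are, up to sign, the elementary symmetric polynomials in $\alpha_1^k, \ldots, \alpha_n^k$, which are symmetric polynomials in $\alpha_1, \ldots, \alpha_n$ and hence, by the fundamental theorem of symmetric functions (or Newton's identities), integer polynomial expressions in the elementary symmetric polynomials of the $\alpha_i$; the latter are (up to sign) the coefficients of $f$, which are integers by hypothesis.

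Next I would bound the coefficients of $f_k$ uniformly in $k$. Since $|\alpha_i^k| \le 1$ for all $i$ and $k$, the $j$-th elementary symmetric polynomial in the $\alpha_i^k$ is bounded in absolute value by $\binom{n}{j}$. Thus the family $\{f_k : k \ge 1\}$ is a family of integer monic polynomials of degree $n$ with coefficients in a fixed finite set, hence finite. By the pigeonhole principle there exist integers $1 \le k < \ell$ with $f_k = f_\ell$, which means that, as multisets, $\{\alpha_1^k, \ldots, \alpha_n^k\} = \{\alpha_1^\ell, \ldots, \alpha_n^\ell\}$. Consequently there is a permutation $\sigma$ of $\{1, \ldots, n\}$ satisfying $\alpha_i^\ell = \alpha_{\sigma(i)}^k$ for every $i$.

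The key step is to iterate this relation. Letting $r$ denote the order of $\sigma$, a straightforward induction on $r$ gives
\[ \alpha_i^{\ell^r} = \alpha_{\sigma(i)}^{k\ell^{r-1}} = \alpha_{\sigma^{2}(i)}^{k^{2}\ell^{r-2}} = \cdots = \alpha_{\sigma^{r}(i)}^{k^{r}} = \alpha_{i}^{k^{r}}, \]
so $\alpha_i^{\ell^r - k^r} = 1$ for every $i$. Since $\ell > k \ge 1$ we have $\ell^r - k^r > 0$, and therefore each $\alpha_i$ is a root of unity. In particular $|\alpha_i| = 1$, establishing the first assertion. For the factorization statement, each $\alpha_i$ is a primitive $d_i$-th root of unity for some $d_i \ge 1$, so its minimal polynomial over $\mathbb{Q}$ is $\Phi_{d_i}$. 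Because $f$ is monic with integer coefficients and $\alpha_i$ is a root, $\Phi_{d_i} \mid f$ in $\mathbb{Z}[x]$; dividing out one cyclotomic factor at a time and repeating the argument on the quotient (which is again a monic integer polynomial all of whose roots are roots of unity) yields a factorization of $f$ as a product of cyclotomic polynomials.

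The only nontrivial step is the pigeonhole together with the iteration producing the equation $\alpha_i^{\ell^r - k^r} = 1$; the rest is bookkeeping. The potential pitfall is getting the permutation iteration correct, in particular keeping track of how the exponents $k$ and $\ell$ combine as $\sigma$ is applied repeatedly, but the clean telescoping displayed above handles this.
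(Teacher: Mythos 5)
Your proof is correct and complete: the integrality of the power polynomials $f_k$, the uniform coefficient bound, the pigeonhole step, and the telescoping iteration $\alpha_i^{\ell^r}=\alpha_i^{k^r}$ (which needs $\alpha_i\ne 0$, correctly supplied by $f(0)\ne 0$) are all handled properly, and the passage from ``every root is a root of unity'' to the cyclotomic factorization via minimal polynomials and Gauss's lemma is sound. The paper itself offers no proof of this lemma, citing it as Kronecker's 1857 theorem with a reference to Damianou's expository article; the argument you give is precisely the classical one found there, so there is nothing to reconcile.
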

By this result  and the fact that cyclotomic polynomials are 
monic and irreducible we can factorize 
a Kronecker polynomial $f(x)$ into irreducibles as 
\begin{equation}
\label{cyclofactor}
f(x) = x^{e_0} \prod_{d \in {\mathcal D}}\Phi_d(x)^{e_d},
\end{equation}
with $e_0 \ge 0$, ${\mathcal D}$ a finite set and each $e_d\ge 1$.

\subsection{Kronecker polynomials at roots of unity}

In \cite{BHMaa} the authors studied the evaluation of cyclotomic polynomials at roots of unity. These results 
have some consequences for Kronecker polynomials as well. We recall the following result from \cite{BHMaa}, which allows us to infer some restrictions on the factorization of $f$ from its values at $\zeta_m$ with $m \in \{1, \ldots, 6\}$.

\begin{lem}
\label{lem:tablecombi}
Let $m\in \{1,2,3,4,6\}$ and $n>m$ be integers. Then
$$|\Phi_n(\xi_m)|=
\begin{cases}
p & \text{if~} n/m=p^k \text{~is a prime power};\\
1 & \text{otherwise}.
\end{cases}
$$
\end{lem}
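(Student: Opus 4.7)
The plan is to split into the cases $m \in \{1,2\}$ and $m \in \{3,4,6\}$, since in the first case $\mathbb{Q}(\xi_m) = \mathbb{Q}$, while in the second $\mathbb{Q}(\xi_m)$ is an imaginary quadratic extension of $\mathbb{Q}$. For $m = 1$ the statement is exactly Lemma \ref{valueat1A}: writing $\Phi_n(1) = e^{\Lambda(n)}$, this equals $p$ when $n = p^k = 1 \cdot p^k$ is a prime power and equals $1$ otherwise. For $m = 2$, Lemma \ref{valueat-1} gives the result, since the condition $n = 2p^e$ is literally $n/2 = p^e$.

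For $m \in \{3,4,6\}$, the key observation I would use is that $\varphi(m) = 2$, so that $\mathbb{Q}(\xi_m)/\mathbb{Q}$ is a quadratic extension whose unique non-trivial Galois automorphism is complex conjugation. Consequently, every $\alpha \in \mathbb{Q}(\xi_m)$ satisfies
\[
N_{\mathbb{Q}(\xi_m)/\mathbb{Q}}(\alpha) = \alpha\,\overline{\alpha} = |\alpha|^{2}.
\]
Applying this with $\alpha = \Phi_n(\xi_m)$ and recalling that $\Phi_m$ is the minimal polynomial of $\xi_m$ (so its roots are exactly the Galois conjugates of $\xi_m$) gives
\[
|\Phi_n(\xi_m)|^{2} \;=\; \prod_{\substack{1\le j\le m \\ (j,m)=1}} \Phi_n(e^{2\pi i j/m}) \;=\; \operatorname{Res}(\Phi_m,\Phi_n).
\]

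One then concludes by invoking the classical evaluation of the resultant of two distinct cyclotomic polynomials (due to Apostol, 1970): for $1 \le m < n$,
\[
\operatorname{Res}(\Phi_m,\Phi_n) \;=\; \begin{cases} p^{\varphi(m)} & \text{if } n/m = p^{k} \text{ for some prime } p \text{ and } k \ge 1; \\ 1 & \text{otherwise.} \end{cases}
\]
Because $\varphi(m) = 2$ for $m \in \{3,4,6\}$, this yields $|\Phi_n(\xi_m)|^{2} \in \{p^{2},1\}$, and taking the positive square root gives the claim.

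The main obstacle is the classical resultant formula, which is not proved anywhere in the excerpt. A self-contained derivation can be obtained by writing the resultant as $\prod_{\zeta} \Phi_n(\zeta)$ over the primitive $m$th roots of unity $\zeta$, inserting the M\"obius expression $\Phi_n(x) = \prod_{d \mid n}(x^d-1)^{\mu(n/d)}$ from \eqref{phimoebius}, and carefully tracking for which divisors $d$ of $n$ one has $\zeta^d = 1$; alternatively, one can induct on the number of distinct prime factors of $n/m$ using the recursions in Lemma \ref{basiceqs}. Either route is routine but involves some bookkeeping, which is likely why the authors prefer to cite \cite{BHMaa}.
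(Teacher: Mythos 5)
Your argument is correct. Note, however, that the paper offers no proof of this lemma at all: it is recalled verbatim from \cite{BHMaa}, where it is established by directly evaluating the M\"obius product $\Phi_n(x)=\prod_{d\mid n}(1-x^d)^{\mu(n/d)}$ at $x=\xi_m$ and analysing the factors $1-\xi_m^d$ according to $\gcd(d,m)$. Your route is genuinely different for $m\in\{3,4,6\}$: reducing $m\in\{1,2\}$ to Lemmas \ref{valueat1A} and \ref{valueat-1} is exactly right, and the observation that $\varphi(m)=2$ makes $|\Phi_n(\xi_m)|^2$ a field norm, hence equal to $\operatorname{Res}(\Phi_m,\Phi_n)$, is a clean way to see why precisely these five values of $m$ admit such a statement. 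What this buys is conceptual clarity and a one-line conclusion from the classical resultant evaluation; what it costs is that all the work is outsourced to that resultant formula (due in various forms to E.~Lehmer, Diederichsen and Apostol), whose proof is of essentially the same nature and difficulty as the direct computation in \cite{BHMaa} — indeed your suggested self-contained derivation of it is exactly the M\"obius-product bookkeeping that the cited paper carries out. Two small points: the resultant convention should be fixed so that $\operatorname{Res}(\Phi_m,\Phi_n)=\prod_{\zeta}\Phi_n(\zeta)$ over the roots $\zeta$ of the monic polynomial $\Phi_m$ (which is what your norm identity produces), and for $m=2$ one should note explicitly that odd $n$ falls into the ``otherwise'' case because $n/2$ is then not an integer, matching the paper's convention. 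Neither affects correctness.
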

\begin{cor} \label{lem:factors}
Let $m \in \{1,2,3,4,6\}$. 
Suppose that $f$ is of the form \eqref{cyclofactor}
and, moreover,
satisfies $\min \mathfrak{D}>m$. Then
\[ |f(\xi_m)| = \prod_{\substack{d\in \mathfrak{D} \\ m|d,~\Lambda(d/m)\ne 0} }|\Phi_d(\xi_m)|^{e_d}
              = \exp\left(\sum\nolimits_{d\in \mathfrak{D},~m|d}e_d\Lambda(d/m)\right)\in \mathbb Z_{>0}. \]
\end{cor}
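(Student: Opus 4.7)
The plan is to take absolute values in the factorization \eqref{cyclofactor}, apply Lemma \ref{lem:tablecombi} term by term, and then rewrite the resulting product using the von Mangoldt function.

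First I would note that $|\xi_m| = 1$, so the monomial factor $x^{e_0}$ contributes $1$ to $|f(\xi_m)|$; hence
\[ |f(\xi_m)| = \prod_{d \in \mathcal{D}} |\Phi_d(\xi_m)|^{e_d}. \]
Since $\min \mathcal{D} > m$, every $d$ appearing in the product satisfies the hypothesis $n > m$ of Lemma \ref{lem:tablecombi}, so $|\Phi_d(\xi_m)|$ equals $p$ when $d/m$ is a positive integer prime power $p^k$ and equals $1$ otherwise. Now $d/m$ is a prime power precisely when $m \mid d$ and $\Lambda(d/m) \ne 0$; all other indices contribute the trivial factor $1$ and may be dropped from the product. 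This yields the first claimed equality
\[ |f(\xi_m)| = \prod_{\substack{d\in \mathcal{D} \\ m\mid d,\ \Lambda(d/m)\ne 0}} |\Phi_d(\xi_m)|^{e_d}. \]

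Next I would take logarithms. For each surviving $d$, writing $d/m = p^k$ we have $\log|\Phi_d(\xi_m)| = \log p = \Lambda(d/m)$, while the indices with $m \mid d$ but $\Lambda(d/m) = 0$ contribute $0$ and can be reinstated for free. This gives
\[ \log|f(\xi_m)| = \sum_{\substack{d \in \mathcal{D} \\ m \mid d}} e_d\, \Lambda(d/m), \]
which is the second equality after exponentiating. Finally, membership in $\mathbb{Z}_{>0}$ is immediate from the product formulation: each factor $|\Phi_d(\xi_m)|^{e_d}$ is either $1$ or $p^{e_d}$ with $e_d \ge 1$, so the product is a positive integer.

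There is no serious obstacle here: the statement is really a direct bookkeeping consequence of Lemma \ref{lem:tablecombi}. The only mildly delicate point is being careful that the condition "$d/m$ is a prime power" tacitly requires $m \mid d$ in the first place, so that the indexing set $\{d \in \mathcal{D} : m \mid d,\ \Lambda(d/m) \ne 0\}$ in the middle expression matches exactly the set of nontrivial factors produced by Lemma \ref{lem:tablecombi}, and that the passage to the exponential form is legitimized by padding with the zero terms $\Lambda(d/m) = 0$ for those $d$ with $m \mid d$ but $d/m$ not a prime power.
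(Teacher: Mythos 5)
Your proof is correct and is exactly the argument the paper intends: the corollary is stated without proof as an immediate consequence of Lemma \ref{lem:tablecombi}, and your writeup simply supplies the routine bookkeeping (taking absolute values in \eqref{cyclofactor}, noting $|\xi_m|=1$, and converting prime-power factors to $\exp(\Lambda)$). Nothing is missing.
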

The following result is a reformulation of the latter, but with
$\mathfrak{D}$ assumed to be unknown.
\begin{cor}
\label{reformulation}
Let $f$ be a Kronecker polynomial and $m \in \{1,2,3,4,6\}$. 
Let us also assume that 
$f(\zeta_d) \ne 0$ for every $d\le m$. Then 
$|f(\xi_m)|$ is an integer and each of its 
prime factors $q$
is contributed by a divisor $\Phi_d$ of $f$ with
$d=mq^j$ for some $j\ge 1$. Conversely, if $\Phi_{dq^j}$ divides
$f$, then $q$ divides $|f(\xi_m)|.$
\end{cor}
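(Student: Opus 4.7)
The plan is to deduce this as a direct corollary of Corollary \ref{lem:factors}, once the hypothesis on $f$ is rephrased in terms of its cyclotomic factorization, and then to read off the prime factors using Lemma \ref{lem:tablecombi}.

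First I would invoke Kronecker's Lemma \ref{kroneckerpols} to factor $f(x) = x^{e_0} \prod_{d \in \mathcal{D}} \Phi_d(x)^{e_d}$ with each $e_d \ge 1$. Since $\Phi_e$ is the minimal polynomial over $\mathbb{Q}$ of $\zeta_e$, we have $\Phi_e(\zeta_d) = 0$ precisely when $e = d$, and because $\zeta_d \ne 0$ for $d \ge 1$, the assumption $f(\zeta_d) \ne 0$ for every $d \le m$ is equivalent to $\mathcal{D} \cap \{1, \dots, m\} = \emptyset$, i.e.\ $\min \mathcal{D} > m$. This is exactly the hypothesis needed to invoke Corollary \ref{lem:factors}.

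Applying that corollary yields $|f(\xi_m)| \in \mathbb{Z}_{>0}$ together with the product expansion
\[
|f(\xi_m)| = \prod_{\substack{d \in \mathcal{D},\, m \mid d \\ \Lambda(d/m) \ne 0}} |\Phi_d(\xi_m)|^{e_d}.
\]
By Lemma \ref{lem:tablecombi}, each surviving factor $|\Phi_d(\xi_m)|$ equals the unique prime $q$ for which $d/m = q^j$ with $j \ge 1$. Hence every prime divisor of $|f(\xi_m)|$ must arise from some $d \in \mathcal{D}$ of the form $d = m q^j$, which proves the forward direction. For the converse, if $\Phi_{m q^j}$ divides $f$ with $j \ge 1$, then $m q^j \in \mathcal{D}$ with $\Lambda(q^j) = \log q \ne 0$, so the factor $|\Phi_{m q^j}(\xi_m)| = q$ appears in the product above and forces $q \mid |f(\xi_m)|$.

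I do not anticipate a real obstacle: the argument is essentially bookkeeping built on top of Corollary \ref{lem:factors} and Lemma \ref{lem:tablecombi}. The only mildly subtle point is the equivalence between ``$f(\zeta_d) \ne 0$ for every $d \le m$'' and ``$\min \mathcal{D} > m$'', which rests on the irreducibility of each $\Phi_d$ over $\mathbb{Q}$ so that the cyclotomic factorisation of $f$ is unique.
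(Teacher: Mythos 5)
Your proposal is correct and matches the paper's approach: the paper gives no separate proof, describing Corollary \ref{reformulation} as a direct reformulation of Corollary \ref{lem:factors}, which is precisely the reduction you carry out (including the translation of the hypothesis $f(\zeta_d)\ne 0$ for $d\le m$ into $\min\mathcal{D}>m$ via irreducibility of the $\Phi_d$, and reading off the primes from Lemma \ref{lem:tablecombi}).
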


Corollary \ref{reformulation} will play an important role in our proof of  Theorem \ref{thm:ciolan} (see Lemma \ref{lem:fk:14-15}).
Weaker versions of this corollary have been applied to cyclotomic numerical semigroups \cite{CGM}.

Our aim at this point is to find conditions that are easy to check and allow us to 
conclude that a given polynomial is not Kronecker. In the following elementary result an example of such a condition
occurs.

\begin{prop}\label{evalkronecker}
Let $f$ be a Kronecker polynomial. Then
\begin{enumerate}
\item $f(1) \ge 0$.
\item If $f(0) \ne 0$ and $f(1) \ne 0$, then $f(-1) \ge 0$. Furthermore, if $f(-1) > 0$, then $f(x) > 0$ for all real $x$.
\end{enumerate}
\end{prop}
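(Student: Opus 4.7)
The plan is to reduce the proposition to the known evaluations $\Phi_n(\pm 1)$ via the Kronecker factorization (Lemma \ref{kroneckerpols}). Writing $f(x) = x^{e_0} \prod_{d \in \mathcal{D}} \Phi_d(x)^{e_d}$ as in \eqref{cyclofactor}, each sign/positivity question at $x = \pm 1$ reduces to inspecting which of $\Phi_1, \Phi_2$ (the only cyclotomic factors that could produce a sign issue or a zero at $\pm 1$) occur in $\mathcal{D}$.

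For part (1), I would just evaluate at $x = 1$. Lemma \ref{valueat1A} says $\Phi_1(1) = 0$ and $\Phi_n(1) = e^{\Lambda(n)} > 0$ for every $n > 1$. Hence
\[
f(1) = \prod_{d \in \mathcal{D}} \Phi_d(1)^{e_d} =
\begin{cases} 0 & \text{if } 1 \in \mathcal{D}, \\ \prod_{d \in \mathcal{D}} e^{e_d \Lambda(d)} > 0 & \text{otherwise,} \end{cases}
\]
so in either case $f(1) \ge 0$.

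For part (2), the hypotheses eliminate the problematic factors: $f(0) \ne 0$ forces $e_0 = 0$, and $f(1) \ne 0$ forces $1 \notin \mathcal{D}$ by the previous paragraph. So $f(x) = \prod_{d \in \mathcal{D},\, d \ge 2} \Phi_d(x)^{e_d}$. Applying Lemma \ref{valueat-1}, which asserts $\Phi_2(-1) = 0$ and $\Phi_d(-1) \in \mathbb{Z}_{>0}$ for every $d > 2$, I conclude $f(-1) = 0$ if $2 \in \mathcal{D}$ and $f(-1) > 0$ otherwise; either way $f(-1) \ge 0$.

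For the final claim, assume $f(-1) > 0$. Then the preceding case analysis forces $2 \notin \mathcal{D}$, so every $d \in \mathcal{D}$ satisfies $d \ge 3$. For such $d$, the roots of $\Phi_d$ are primitive $d^{th}$ roots of unity, which are non-real, so $\Phi_d$ has no real zeros. Since $\Phi_d$ is monic (hence positive for $x \to +\infty$) and continuous, the intermediate value theorem gives $\Phi_d(x) > 0$ for every real $x$. Therefore $f(x) = \prod_{d \in \mathcal{D}} \Phi_d(x)^{e_d} > 0$ for every real $x$. There is no real obstacle here; the only care needed is to track which of the values $\Phi_1(1), \Phi_2(-1)$ are ruled out by the hypotheses, and to note for the last sentence that one must exclude both of these low-index factors in order to guarantee strict positivity on all of $\mathbb{R}$.
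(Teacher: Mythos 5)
Your proof is correct and follows essentially the same route as the paper: factor $f$ via Kronecker's lemma, then read off the signs from Lemmas \ref{valueat1A} and \ref{valueat-1}, using the hypotheses to exclude the factors $x^{e_0}$, $\Phi_1$ and $\Phi_2$ before concluding positivity on all of $\mathbb{R}$. No issues.
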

\begin{proof}$~$\\
{\rm a)} We have $f(1) \ge 0$ by \eqref{cyclofactor} and Lemma \ref{valueat1A}.\\
{\rm b)} Note that $e_0 = 0$ and $1 \not\in \mathcal{D}$. Recall that $\Phi_n(-1) \ge 0$ for every $n > 1$ (Lemma \ref{valueat-1}). Hence we obtain $f(-1) \ge 0$. Furthermore, if $f(-1) > 0$, then $2 \not\in \mathcal{D}$. Let $x$ be real. We have $\Phi_n(x) > 0$ for every $n > 2$ and, consequently, $f(x) > 0$.\qedhere
\end{proof}

\begin{Example} \label{Coxeter}
One can apply Proposition \ref{evalkronecker} to easily detect self-reciprocal 
poynomials that are not Kronecker.

For every integer $n \ge 3$ let $E_n(x) = 
(x^{n+1}-x^{n-1}-x^{n-2}+x^3+x^2-1)/(x-1)$. These polynomials are known in the literature as Coxeter polynomials (see \cite{Gross}, where the authors 
determine their cyclotomic part). Set $n \ge 6$. One can easily show that
$$E_n(x) = x^n + x^{n-1} - \sum_{k=3}^{n-3}x^k + x + 1.$$
Consequently, $E_n$ is self-reciprocal and $E_n(1) = 9 - n$. Proposition \ref{evalkronecker} implies that $E_n$ is not Kronecker for $n \ge 10$. For $n \le 9$  it turns out that $E_n$ is Kronecker (see \cite[Table 2]{Gross}).
\end{Example}

Of course, there are many non-Kronecker self-reciprocal polynomials that verify Proposition \ref{evalkronecker} and, thus, the argument used in Example \ref{Coxeter} does not always work. This is so 
in case of the polynomials considered in Section \ref{sec:ns}. Therefore, more 
restrictive conditions than the ones given in Proposition \ref{evalkronecker} are needed, which motivated our research towards the results of Section \ref{sec:log-kronecker}.

\subsection{The logarithmic derivatives of Kronecker polynomials at $\pm1$} \label{sec:log-kronecker} 

In this section we apply Theorems \ref{thm:kth-log-deriv:1} and \ref{thm:kth-log-deriv:-1} to obtain several linear equations involving the logarithmic derivatives of Kronecker polynomials at $\pm 1$. Let $f(x)$ be a Kronecker polynomial and let us factorize $f$ as in \eqref{cyclofactor}. Note that 
\begin{equation}
    \label{eq:trivialdegree}
\deg f= e_0 + \sum_{d \in \mathcal{D}} e_d J_1(d) = e_0 + \sum_{d \in \mathcal{D}} e_d \varphi(d). 
\end{equation}
Recall that $\Phi_1$ is anti self-reciprocal but $\Phi_n$ is self-reciprocal for every $n \ge 2$ 
(Lemma \ref{basiceqs} e)). As a consequence we obtain that $f(x)/x^{e_0}$ is self-reciprocal if $e_1$ is even and $f(x)/x^{e_0}$ is anti self-reciprocal otherwise. Thus Proposition \ref{self-reciprocal:deriv} allows us to compute the derivative of $f$ at $\pm1$ in most cases. 
 \begin{cor} \label{cor:log-kronecker:1}
   Let $f$ be a Kronecker polynomial. Let us factorize $f$ as in \eqref{cyclofactor}. %Let $e_0$ be the greatest integer such that $x^{e_0}$ divides $f$. 
   \begin{enumerate}
       \item If $f(1) \ne 0$, then $(\log f)'(1) = (\deg f + e_0)/ 2$.
       \item If $f(-1) \ne 0$, then $(\log f)'(-1) = - (\deg f + e_0)/ 2$.
   \end{enumerate}
\end{cor}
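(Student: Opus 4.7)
The plan is to differentiate the logarithm of the factorization \eqref{cyclofactor} term by term, so that the problem reduces to knowing $(\log \Phi_d)'(\pm 1)$ for each $d \in \mathcal{D}$, which is precisely the content of Corollary \ref{phin1:deriv}. From $f(x) = x^{e_0} \prod_{d \in \mathcal{D}} \Phi_d(x)^{e_d}$ the linearity of the logarithmic derivative gives
\[
(\log f)'(x) \;=\; \frac{e_0}{x} \,+\, \sum_{d \in \mathcal{D}} e_d \, (\log \Phi_d)'(x),
\]
and the two parts of the corollary will follow by evaluating at $x = 1$ and $x = -1$ respectively, using \eqref{eq:trivialdegree} to repackage $\sum_{d \in \mathcal{D}} e_d \varphi(d)$ as $\deg f - e_0$.

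For part (a), the hypothesis $f(1) \ne 0$ rules out $1 \in \mathcal{D}$, because $\Phi_1(1) = 0$; hence every $d \in \mathcal{D}$ satisfies $d \ge 2$. Corollary \ref{phin1:deriv} then gives $(\log \Phi_d)'(1) = \varphi(d)/2$ for $d > 2$, and the boundary case $d = 2$ is handled directly from $\Phi_2(x) = x + 1$, which yields the same value $1/2 = \varphi(2)/2$. Substituting into the displayed formula I obtain
\[
(\log f)'(1) \;=\; e_0 + \tfrac{1}{2}\sum_{d \in \mathcal{D}} e_d \varphi(d) \;=\; e_0 + \tfrac{1}{2}(\deg f - e_0) \;=\; \tfrac{1}{2}(\deg f + e_0).
\]

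For part (b), the hypothesis $f(-1) \ne 0$ this time rules out $2 \in \mathcal{D}$ (since $\Phi_2(-1) = 0$), but $1 \in \mathcal{D}$ is permitted because $\Phi_1(-1) = -2 \ne 0$. Corollary \ref{phin1:deriv} supplies $(\log \Phi_d)'(-1) = -\varphi(d)/2$ for every $d > 2$, and the remaining case $d = 1$ can be verified by hand: $(\log \Phi_1)'(-1) = 1/(-1-1) = -1/2 = -\varphi(1)/2$, so the same uniform formula holds for all $d \in \mathcal{D}$. Evaluating the displayed identity at $-1$, together with $e_0/(-1) = -e_0$ and \eqref{eq:trivialdegree}, then yields
\[
(\log f)'(-1) \;=\; -e_0 - \tfrac{1}{2}(\deg f - e_0) \;=\; -\tfrac{1}{2}(\deg f + e_0).
\]

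There is no serious obstacle here; the only point to watch is the bookkeeping of which indices can appear in $\mathcal{D}$ under each nonvanishing hypothesis, together with the verification of the edge values $d \in \{1, 2\}$ that lie outside the range of Corollary \ref{phin1:deriv}. One could alternatively route the argument through Proposition \ref{self-reciprocal:deriv} applied to $f(x)/x^{e_0}$, but this would require splitting into cases according to the parity of $e_1$ (self-reciprocal versus anti-self-reciprocal), which seems strictly more work than the direct computation above.
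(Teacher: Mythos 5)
Your proof is correct. It takes a different route from the paper's: the corollary there is presented as an immediate consequence of the observation that $f(x)/x^{e_0}$ is self-reciprocal or anti self-reciprocal according to the parity of $e_1$, combined with Proposition \ref{self-reciprocal:deriv} applied to that single polynomial, whereas you differentiate the factorization \eqref{cyclofactor} factor by factor and feed in Corollary \ref{phin1:deriv} (checking $d=1,2$ by hand). Your route has two small advantages. First, it sidesteps the parity case-split entirely; note that Proposition \ref{self-reciprocal:deriv} as stated covers only the self-reciprocal case, so in part (b) with $e_1$ odd (where $1\in\mathcal{D}$ is allowed) the paper's one-line justification needs an extra anti-self-reciprocal analogue that is never written down --- this is presumably what the phrase ``in most cases'' is alluding to, and your direct computation of $(\log\Phi_1)'(-1)=-1/2$ fills that gap cleanly. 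Second, your argument is exactly the $k=1$ specialization of the machinery the paper sets up right afterwards (equation \eqref{eq:log-kronecker:kth-deriv} together with Theorem \ref{thm:kth-log-deriv:1}; the paper itself remarks that \eqref{eq:log-kronecker:1} recovers the corollary for $k=1$), so it integrates more smoothly with the generalization to higher derivatives. What the paper's route buys in exchange is independence from the cyclotomic evaluation results: it needs only that $f/x^{e_0}$ is (anti) self-reciprocal, not the value $\varphi(d)/2$ for each factor.
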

Our aim is to generalize the previous result for higher order logarithmic derivatives of $f$.
On logarithmically differentiating both sides of \eqref{cyclofactor}, we obtain
\begin{equation}\label{eq:log-kronecker:kth-deriv}
(\log f)^{(k)}(x) = e_0 \log^{(k)}(x) + \sum_{d \in \mathcal{D}} e_d (\log \Phi_d)^{(k)}(x).
\end{equation}
If $f(1) \ne 0$, then, by Theorem \ref{thm:kth-log-deriv:1} and the fact that $\log^{(k)}(1) = (-1)^{k-1}(k-1)! = s(k,1)$, we have
\begin{equation}\label{eq:log-kronecker:1}
(\log f)^{(k)}(1) - e_0 s(k,1) = \sum_{d \in \mathcal{D}} e_d \sum_{j=1}^{k} \frac{B_j^+s(k,j)}{j} J_j(d)
                                       = \sum_{j=1}^{k} \frac{B_j^+s(k,j)}{j} \sum_{d \in \mathcal{D}} e_d J_j(d).
\end{equation}
For $k=1$ we recover part of Corollary \ref{cor:log-kronecker:1} on recalling \eqref{eq:trivialdegree}. Note that, if $k \ge 2$, then the value $(\log f)^{(k)}(1)$ does not only depend on the degree of $f,$ but also on its factorization \eqref{cyclofactor}. 

\begin{Thm} \label{thm:log-kronecker:k}
  Let $f$ be a Kronecker polynomial with factorization as in \eqref{cyclofactor}. 
  \begin{enumerate}
      \item If $f(1) \ne 0$ then, for each integer $k\ge 2$ we have
      \[ \sum_{j = 1}^k \stirlingtwo{k}{j} (\log f)^{(j)}(1) = \frac{B_{k}^+}{k} \sum_{d \in \mathcal{D}} e_d J_k(d). \]
      \item If $f(-1) \ne 0$, then for each integer $k \ge 2$ we have
      \[ \sum_{j = 1}^k (-1)^{j} \stirlingtwo{k}{j} (\log f)^{(j)}(-1) = \frac{B_{k}^+}{k} \sum_{d \in \mathcal{D}} e_d J_k(d \alpha_d), \]
      where $\alpha_d$ is as in \eqref{alpha}.
  \end{enumerate}
\end{Thm}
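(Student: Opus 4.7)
The plan for part (a) is to combine the already derived identity \eqref{eq:log-kronecker:1} with the orthogonality relation between Stirling numbers of the two kinds. Writing \eqref{eq:log-kronecker:1} as
\[
(\log f)^{(\ell)}(1) - e_0\, s(\ell,1) = \sum_{j=1}^{\ell} \frac{B_j^+ s(\ell,j)}{j} \sum_{d \in \mathcal{D}} e_d J_j(d), \qquad \ell \ge 1,
\]
I will multiply both sides by $\stirlingtwo{k}{\ell}$ and sum $\ell$ from $1$ to $k$. On the left, the orthogonality \eqref{eq:stirling:matrix:2} gives $\sum_{\ell=1}^{k} \stirlingtwo{k}{\ell} s(\ell,1) = \delta_{k,1}$, which vanishes for $k \ge 2$; so only $\sum_{\ell=1}^k \stirlingtwo{k}{\ell} (\log f)^{(\ell)}(1)$ survives. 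On the right, swapping the order of summation and applying \eqref{eq:stirling:matrix:2} once more collapses the double sum because $\sum_{\ell=j}^{k} \stirlingtwo{k}{\ell} s(\ell,j) = \delta_{k,j}$, leaving precisely $\frac{B_k^+}{k}\sum_{d \in \mathcal{D}} e_d J_k(d)$. This is exactly the identity in part (a).

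For part (b) I will first establish the $-1$ analogue of \eqref{eq:log-kronecker:1}. The assumption $f(-1)\ne 0$ forces $2 \notin \mathcal{D}$, so Theorem \ref{thm:kth-log-deriv:-1} applies to $\Phi_d$ for every $d \in \mathcal{D}$ (including the admissible case $d = 1$). A direct computation using the paper's convention gives $(\log x)^{(\ell)}(-1) = (1/x)^{(\ell-1)}\bigr|_{x=-1} = -(\ell-1)! = (-1)^{\ell} s(\ell,1)$. Substituting this and Theorem \ref{thm:kth-log-deriv:-1} into \eqref{eq:log-kronecker:kth-deriv} at $x=-1$ and multiplying through by $(-1)^\ell$ yields
\[
(-1)^\ell (\log f)^{(\ell)}(-1) - e_0\, s(\ell,1) = \sum_{j=1}^{\ell} \frac{B_j^+ s(\ell,j)}{j} \sum_{d \in \mathcal{D}} e_d J_j(d\alpha_d).
\]
Multiplying by $\stirlingtwo{k}{\ell}$ and summing $\ell$ from $1$ to $k$, the exact same orthogonality argument as in part (a) delivers the claimed identity.

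The manipulations are purely formal once the key observation is in place, so I do not anticipate a genuine obstacle; the one spot requiring care is bookkeeping of signs, namely verifying that the monomial contribution $x^{e_0}$ produces $s(\ell,1)$ at $+1$ and $(-1)^\ell s(\ell,1)$ at $-1$, so that in both cases it gets annihilated by $\sum_{\ell=1}^{k}\stirlingtwo{k}{\ell} s(\ell,1) = \delta_{k,1} = 0$ for $k \ge 2$. That is the single place where the hypothesis $k \ge 2$ is used, and it is also what explains why the $e_0$-exponent of the monomial does not appear on the right-hand side.
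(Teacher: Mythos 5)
Your proof is correct and is essentially the paper's own argument: the paper writes the system \eqref{eq:log-kronecker:1} (and its $-1$ analogue \eqref{eq:log-kronecker:-1}) in matrix form $Cu=v$ with $C_{ij}=s(i,j)$ and inverts it using $SC=I$ from \eqref{eq:stirling:matrix:2}, which is exactly your "multiply by $\stirlingtwo{k}{\ell}$ and sum" manipulation, with the $e_0\,s(\ell,1)$ term likewise killed by $\delta_{k,1}=0$ for $k\ge 2$. Your explicit check that $f(-1)\ne 0$ forces $2\notin\mathcal{D}$ while $d=1$ remains admissible in Theorem \ref{thm:kth-log-deriv:-1}, and the sign computation for $(\log x)^{(\ell)}(-1)$, are welcome details that the paper leaves implicit.
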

\begin{proof}
  Let us consider the vectors $u, v \in \mathbb{Q}^k$ given by 
  \[ u_j = \frac{B_j^+}{j} \sum_{d \in \mathcal{D}} e_d J_j(d) \quad \text{and} \quad v_j = (\log f)^{(j)}(1) - e_0 s(j,1) \] 
  for every integer $j$ with $1 \le j \le k$. Note that \eqref{eq:log-kronecker:1} can be stated as $C u = v$, where $C$ is the $k \times k$ dimensional matrix whose entries are $C_{ij} = s(i,j)$. The inverse of the matrix $C$ turns out to be the $k \times k$ dimensional matrix $S$ whose entries are $S_{ij} = \stirlingtwo{i}{j}$ (see Section \ref{sec:pre:stirling} and, specifically, \eqref{eq:stirling:matrix}). Thus, we have $v = C^{-1} u$ and, in particular,
  \begin{equation*}
    \begin{aligned}
      \frac{B_{k}^+}{k} \sum_{d \in \mathcal{D}} e_d J_k(d) & = \sum_{j = 1}^k \stirlingtwo{k}{j} ((\log f)^{(j)}(-1) - e_0 s(j,1)) \\
                                                                     & = -e_0 \delta_{k,1} + \sum_{j = 1}^k \stirlingtwo{k}{j} (\log f)^{(j)}(-1),
    \end{aligned}
  \end{equation*}
  where we applied \eqref{eq:stirling:matrix:2} for $j = 1$.
  The same argument can be employed to obtain the second equality on observing that if $f(-1) \ne 0$, then 
  \begin{equation}\label{eq:log-kronecker:-1}
  \begin{aligned}
    (-1)^k (\log f)^{(k)}(-1) - e_0 s(k,1) & = \sum_{d \in \mathcal{D}} e_d \sum_{j=1}^{k} \frac{B_j^+s(k,j)}{j} J_j(d \alpha_d) \\ 
                                          & = \sum_{j=1}^{k} \frac{B_j^+s(k,j)}{j} \sum_{d \in \mathcal{D}} e_d J_j(d \alpha_d), 
  \end{aligned}
  \end{equation}
  where we used Theorem \ref{thm:kth-deriv:-1}.
\end{proof}

\begin{cor} \label{cor:log-kronecker:k:cyclo}
  Let $n$ and $k$ be positive integers. 
  \begin{enumerate}
      \item \label{item:log-cyclo} If $n \ne 1$, then
      \[ \sum_{j = 1}^k \stirlingtwo{k}{j} (\log \Phi_n)^{(j)}(1) = \frac{B_{k}^+}{k} J_k(n). \]
      \item If $n \ne 2$, then 
      \[ \sum_{j = 1}^k (-1)^{j} \stirlingtwo{k}{j} (\log \Phi_n)^{(j)}(-1) = \frac{B_{k}^+}{k} J_k(n \alpha_n). \]
  \end{enumerate}
\end{cor}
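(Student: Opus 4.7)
The plan is to read Corollary \ref{cor:log-kronecker:k:cyclo} as the single-factor specialization of Theorem \ref{thm:log-kronecker:k}. The Kronecker polynomial $f = \Phi_n$ trivially fits the factorization template \eqref{cyclofactor} with $e_0 = 0$, $\mathcal{D} = \{n\}$, and $e_n = 1$; substituting these data into parts (a) and (b) of Theorem \ref{thm:log-kronecker:k} collapses the sums $\sum_{d \in \mathcal{D}} e_d J_k(d)$ and $\sum_{d \in \mathcal{D}} e_d J_k(d \alpha_d)$ to $J_k(n)$ and $J_k(n\alpha_n)$, which is exactly what the corollary claims.

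First I would verify the nonvanishing hypotheses needed by Theorem \ref{thm:log-kronecker:k}. By Lemma \ref{valueat1A}, the assumption $n \neq 1$ in part (a) is equivalent to $\Phi_n(1) \neq 0$; by Lemma \ref{valueat-1}, the assumption $n \neq 2$ in part (b) is equivalent to $\Phi_n(-1) \neq 0$. Hence Theorem \ref{thm:log-kronecker:k} applies and yields both identities for every integer $k \geq 2$.

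The only thing that does not literally fall out of Theorem \ref{thm:log-kronecker:k} is the case $k=1$, since that theorem is stated for $k \geq 2$. But here the identities reduce to statements already established earlier. For part (a) we must check $(\log \Phi_n)'(1) = \tfrac{1}{2}\varphi(n)$, which is precisely Corollary \ref{phin1:deriv}. For part (b) we must check $-(\log \Phi_n)'(-1) = \tfrac{1}{2}\varphi(n\alpha_n)$; for $n>2$ Corollary \ref{phin1:deriv} gives $(\log \Phi_n)'(-1) = -\tfrac{1}{2}\varphi(n)$, while for $n = 1$ a direct computation from $\Phi_1(x) = x-1$ gives $(\log \Phi_1)'(-1) = -\tfrac{1}{2}$. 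The identity $\varphi(n) = \varphi(n\alpha_n)$ then closes the case, and this equality is immediate from the three branches in \eqref{alpha}: it is trivial when $4 \mid n$, and follows from $\varphi(2m) = \varphi(m)$ for odd $m$ in the other two branches.

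I do not anticipate a substantive obstacle: the whole content of Theorem \ref{thm:log-kronecker:k} is to repackage the logarithmic derivatives at $\pm 1$ of an arbitrary Kronecker polynomial as a $\mathbb{Z}$-linear combination (in the exponents $e_d$) of Jordan totient values at the constituent $d$, and the corollary is literally its one-term case, with the $k=1$ case being a restatement of Corollary \ref{phin1:deriv}.
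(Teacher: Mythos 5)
Your proof is correct and matches the paper's (implicit) argument: the corollary is stated there without proof precisely because it is the one-term specialization $\mathcal{D}=\{n\}$, $e_n=1$, $e_0=0$ of Theorem \ref{thm:log-kronecker:k}, with the nonvanishing hypotheses supplied by Lemmas \ref{valueat1A} and \ref{valueat-1}. Your separate treatment of $k=1$ (which Theorem \ref{thm:log-kronecker:k} does not cover) is a welcome extra care the paper glosses over; the only cosmetic remark is that for $k=1$, $n=2$ in part (a) you should cite the direct computation $(\log\Phi_2)'(1)=1/2$ rather than Corollary \ref{phin1:deriv}, which is stated only for $n>2$.
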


Recall that $ (\log \Phi_2)^{(j)}(1) = (-1)^{j-1} (j-1)! 2^{-j}$ and $J_k(2) = 2^k -1$. These observations in conjunction with Corollary \ref{cor:log-kronecker:k:cyclo} 
\ref{item:log-cyclo} yield the
identity, due to Worpitzky \cite{Carlitz},
\[ B_{k}^+ =  \frac{k}{2^k -1} \sum_{j = 1}^k (-1)^{j-1} 2^{-j} (j-1)! \stirlingtwo{k}{j}. \]

Recall that $B_k^+ = 0$ for every odd integer with $k \ge 3$ and hence the logarithmic derivatives of Kronecker polynomials at $\pm 1$
verify several homogeneous linear equations. In the cases $k \in \{3, 5\}$ these equations are
\begin{equation}\label{eq:log-kronecker:small}
\left\{
\begin{aligned} 
    -(\deg f)/2 = & 3 (\log f)''(1) + (\log f)'''(1),  \\
    -(\deg f)/2 = & 15 (\log f)''(1) + 25 (\log f)'''(1) + 10 (\log f)^{(4)}(1) + (\log f)^{(5)}(1), \\
    (\deg f)/2 = & 3 (\log f)''(-1) - (\log f)'''(-1),  \\
    (\deg f)/2 = & 15 (\log f)''(-1) - 25 (\log f)'''(-1) + 10 (\log f)^{(4)}(-1) - (\log f)^{(5)}(-1). \\
\end{aligned}
\right.
\end{equation}

In case $B_k^+ \ne 0$, the quantity $\sum_{d \in \mathcal{D}} e_d J_k(d)$ can be 
bounded as follows.
\begin{cor} \label{cor:log-kronecker:bound}
Let $f$ be a Kronecker polynomial with factorization as in \eqref{cyclofactor}. 
\begin{enumerate}
    \item If $f(1) \ne 0$, then for every even integer $k\ge 2$ we have
    \[  (2^{k}-1)(\deg f - e_0) \le \frac{k}{B_k^+} \sum_{j = 1}^k \stirlingtwo{k}{j} (\log f)^{(j)}(1). \]
    \item If $f(\pm 1) \ne 0$, then for every even integer $k\ge 2$ we have
    \[ \frac{(3^{k}-1)}{2} (\deg f - e_0) \le \frac{k}{B_k^+} \sum_{j = 1}^k (-1)^{j} \stirlingtwo{k}{j} (\log f)^{(j)}(-1). \]
\end{enumerate}
\end{cor}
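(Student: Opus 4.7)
The plan is to rewrite the right-hand sides of (a) and (b) using Theorem~\ref{thm:log-kronecker:k}, express $\deg f - e_0$ in terms of Jordan totients via \eqref{eq:trivialdegree}, and then reduce both inequalities to pointwise comparisons of $J_k$ with $J_1$ evaluated at each conductor $d \in \mathcal{D}$.

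Indeed, for even $k \ge 2$ the identity of Theorem~\ref{thm:log-kronecker:k}(a) yields
\[ \frac{k}{B_k^+} \sum_{j=1}^k \stirlingtwo{k}{j} (\log f)^{(j)}(1) = \sum_{d \in \mathcal{D}} e_d\, J_k(d) \qquad (f(1) \ne 0), \]
and Theorem~\ref{thm:log-kronecker:k}(b) gives the analogous expression $\sum_{d \in \mathcal{D}} e_d\, J_k(d\alpha_d)$ at $-1$ under $f(-1)\ne 0$. Combined with $\deg f - e_0 = \sum_{d\in\mathcal{D}} e_d J_1(d)$ and the positivity of the $e_d$, the corollary is implied by the pointwise estimates $J_k(d) \ge (2^k - 1)\, J_1(d)$ for every relevant $d$ in part (a), and $J_k(d\alpha_d) \ge \tfrac{3^k-1}{2}\, J_1(d)$ for every relevant $d$ in part (b).

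To establish these pointwise estimates I use the multiplicativity of $J_k$ together with $J_k(p^a) = p^{(a-1)k}(p^k - 1)$, which give
\[ \frac{J_k(d)}{J_1(d)} = \prod_{p^a \| d} p^{(a-1)(k-1)}\, \frac{p^k - 1}{p - 1}, \]
a product of terms each of which is strictly increasing in $p$ and non-decreasing in $a$ (for $k \ge 2$). The hypothesis $f(1) \ne 0$ excludes $d = 1$ from $\mathcal{D}$, so the minimum in part (a) is attained at $d = 2$ and equals $2^k - 1$. For (b) the hypothesis $f(\pm 1)\ne 0$ additionally excludes $d = 2$, and I split according to \eqref{alpha}: if $d$ is odd then $d\alpha_d = 2d$ and $J_k(d\alpha_d)/J_1(d) = (2^k - 1)\, J_k(d)/J_1(d) \ge (2^k - 1)(3^k - 1)/2$, since every prime factor of $d$ is then $\ge 3$; if $2 \parallel d$ then $d = 2m$ with odd $m \ge 3$ and $d\alpha_d = m$, so $J_k(d\alpha_d)/J_1(d) = J_k(m)/J_1(m) \ge (3^k - 1)/2$; if $4 \mid d$ then $d\alpha_d = d$ and the local factor at $p = 2$ already equals $2^{k-1}(2^k - 1)$, which exceeds $(3^k - 1)/2$ for every even $k \ge 2$. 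The minimum $(3^k - 1)/2$ is attained at $d = 6$.

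The only real obstacle is organizing the case analysis in part (b) cleanly; everything else is an immediate consequence of the explicit formula for $J_k$ on prime powers together with Theorem~\ref{thm:log-kronecker:k}.
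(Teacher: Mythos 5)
Your proposal is correct and follows essentially the same route as the paper: both reduce, via Theorem~\ref{thm:log-kronecker:k} and \eqref{eq:trivialdegree}, to the pointwise bounds $J_k(d)/\varphi(d)\ge 2^k-1$ for $d\ge 2$ and $J_k(d\alpha_d)/\varphi(d)\ge (3^k-1)/2$ for $d\ge 3$, obtained from multiplicativity and the prime-power formula with the minima at $d=2$ and $d=6$. Your explicit case analysis on $\alpha_d$ in part (b) simply fills in what the paper dismisses as ``a variation of the analysis above,'' and your local factor $p^{(a-1)(k-1)}(p^k-1)/(p-1)$ is in fact the corrected form of the exponent that the paper misprints as $p^{(m-1)k}$.
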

\begin{proof}
Note that $J_k / \varphi$ is a multiplicative function 
satisfying $J_k(d) / \varphi(d)\ge 1$ for every $d\ge 1.$ It follows
that 
$\min \{J_k(d) / \varphi(d) : d \ge 2\} =
\min \{J_k(d) / \varphi(d) : d\ge 2,\Lambda(d)\ne 0\}.$
As $J_k(p^m) / \varphi(p^m) = p^{(m-1)k}(1 + p + \cdots + p^{k-1})$ for every prime power $p^m,$ we see
that the minimum is assumed for $p^m=2,$ leading to
$\min \{J_k(d) / \varphi(d) : d \ge 2\}=2^k-1.$ Since $\min \mathcal D\ge 2$ we obtain
\[ \sum_{d \in \mathcal{D}} e_d J_k(d) \ge  \sum_{d \in \mathcal{D}} e_d \varphi(d) 
\min \left\{\frac{J_k(d)}{\varphi(d)} : d \ge 2\right\}
= (2^{k}-1)(\deg f - e_0). \]
In the second case we have $\min \mathcal D\ge 3$ and we find 
\[ \sum_{d \in \mathcal{D}} e_d J_k(d \alpha_d) \ge  \sum_{d \in \mathcal{D}} e_d \varphi(d \alpha_d) 
\min \left\{\frac{J_k(d \alpha_d)}{\varphi(d \alpha_d)} : d \ge 3\right\}
= \frac{(3^{k}-1)}{2}(\deg f - e_0), \]
where the equality is a consequence of the minimum being assumed for $d=6$ (as shown by a variation of the analysis above) and the equality 
$\varphi(d \alpha_d) = \varphi(d).$

The inequalities in the statement of the corollary now follow on invoking Theorem \ref{thm:log-kronecker:k}. 
\end{proof}
Corollary \ref{cor:log-kronecker:bound} in the cases $k \in \{2, 4\}$ leads to the inequalities
\begin{equation}\label{eq:log-kronecker:small:bound}
\left\{
\begin{aligned} 
     -(\deg f)/4 \le & (\log f)''(1),  \\
     -5(\deg f)/8 \ge & 7 (\log f)''(1) + 6 (\log f)'''(1) + (\log f)^{(4)}(1), \\
     5(\deg f)/6 \le & (\log f)''(-1),  \\
     (\deg f)/6 \ge & 7 (\log f)''(-1) - 6 (\log f)'''(-1) + (\log f)^{(4)}(-1). \\
\end{aligned}
\right.
\end{equation}
If one has more information about the set $\mathcal{D}$, then the lower bounds given at Corollary \ref{cor:log-kronecker:bound} can be improved as the following remark shows. 
\begin{Remark} \label{rem:log-kronecker:bound}
Let $\mathcal C$ be a set containing $1$ such that no $\Phi_c$ with $c$ in $\mathcal C$ divides $f$. Define $\mu_{\mathcal C}(k)=\min\{J_k(j) / \varphi(j) : j\in \mathbb N\backslash \mathcal C\}$.
Then for every even integer $k$ with $k \ge 2$ we have
\[ (2^{k}-1)(\deg f - e_0) \le \mu_{\mathcal C}(k) (\deg f - e_0) \le  \frac{k}{B_k^+} \sum_{j = 1}^k \stirlingtwo{k}{j} (\log f)^{(j)}(1). \]
We have $\mu_{\mathcal C}(1)=1.$ On noting that $J_k(j)/\varphi(j)\ge j$ for $k\ge 2,$ 
$\mu_{\mathcal C}(k)$ can be easily explicitly computed.
\end{Remark}

\subsection{The $k^{th}$ logarithmic derivative of $\Psi_n$ at $-1$}
Put
$$\Psi_n(x)=\frac{x^n-1}{\Phi_n(x)}.$$
The second author named these polynomials 
\emph{inverse cyclotomic polynomials} and 
inititated their study \cite{Inverse}.
As obviously
$\Psi_n(x)=\prod_{d\mid n,\,d<n}\Phi_d(x),$
the inverse cyclotomic polynomials 
are Kronecker polynomials. 
Inverse cyclotomic polynomials behave in many aspects similarly, but also in many aspects dissimilarly from the cyclotomic polynomials. Meanwhile an application in elliptic curve cryptography has been found \cite{crypto}. 

In this section we translate the previous results to the setting of inverse cyclotomic polynomials. First of all, observe that \eqref{eq:nicol2} can be rewritten as
\begin{equation*}
  \sum_{k = 1}^n r_k(n) x^{k-1} = \Psi_n(x) \Phi_n'(x)  \qquad (n \ge 1).
\end{equation*}

Since $\Psi_n(0) = -1$ and $\Psi_n$ is continuous, one can consider the logarithm of $-\Psi_n$ around zero. The following result follows from Nicol's formula \eqref{eq:nicol} and the Taylor series of $\log(1-x^n)$ around $x=0$.

\begin{cor}
  Let $n\ge 2$ be an integer. Then 
  \[ \log(-\Psi_n)(x) = \sum_{j = 1}^\infty \frac{r_j(n)}{j} x^j - \sum_{j = 1}^\infty \frac{x^{nj}}{j}  \qquad (|x| < 1). \]
  In particular, for each positive integer $k$ we have
  \[ (\log \Psi_n)^{(k)}(0) = \begin{cases}
    (k-1)! r_k(n) & \text{if } n \nmid k; \\
    (k-1)! (r_k(n)-n) & \text{if } n \mid k.
  \end{cases} \]
\end{cor}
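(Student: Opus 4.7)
The proof is short: the corollary is essentially a bookkeeping exercise combining Nicol's identity \eqref{eq:nicol} with the standard Taylor expansion of $\log(1-x^n)$. The plan is to write $\Psi_n$ in terms of $\Phi_n$ and $x^n-1$ and then read off both assertions.

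First, I would note that $\Psi_n(x) = (x^n-1)/\Phi_n(x)$ gives $-\Psi_n(x) = (1-x^n)/\Phi_n(x)$. Since $\Phi_n(0)=1$ for $n\ge 2$ and $-\Psi_n(0) = 1$, the function $-\Psi_n$ is positive and real-analytic in a neighbourhood of $0$, so $\log(-\Psi_n)$ is well-defined there. For $|x|<1$, both factors are positive, and logarithmic additivity yields
\begin{equation*}
\log(-\Psi_n)(x) \;=\; \log(1-x^n) - \log \Phi_n(x).
\end{equation*}
Applying Nicol's identity \eqref{eq:nicol} to the second term and the elementary expansion $\log(1-x^n) = -\sum_{j\ge 1} x^{nj}/j$ (valid for $|x|<1$) to the first gives the claimed series
\begin{equation*}
\log(-\Psi_n)(x) \;=\; \sum_{j=1}^{\infty}\frac{r_j(n)}{j}\,x^j \;-\; \sum_{j=1}^{\infty}\frac{x^{nj}}{j},
\end{equation*}
which proves the first assertion.

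For the derivative formula, recall the paper's convention that $(\log \Psi_n)^{(k)} = (\Psi_n'/\Psi_n)^{(k-1)}$, which is sign-insensitive, so $(\log \Psi_n)^{(k)}(0) = (\log(-\Psi_n))^{(k)}(0)$. Differentiating the power series term by term and evaluating at $0$, the $k^{th}$ derivative of $\sum_{j\ge 1} r_j(n) x^j/j$ at $0$ picks out only the $j=k$ term and equals $(k-1)!\,r_k(n)$. The $k^{th}$ derivative of $-\sum_{j\ge 1} x^{nj}/j$ at $0$ vanishes unless $nj=k$ for some positive integer $j$, i.e.\ unless $n\mid k$; in that case only $j=k/n$ contributes, giving $-k!/(k/n) = -n(k-1)!$. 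Combining these yields exactly the two-case formula in the statement.

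There is no real obstacle here; the only point worth being careful about is that the series identity for $\log(-\Psi_n)$ is an identity of analytic functions on $|x|<1$ (both sides converge absolutely there), which legitimizes the term-by-term differentiation used in the second part. Everything else reduces to identifying the single nonzero Taylor coefficient of each series at order $k$.
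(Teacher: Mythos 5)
Your proof is correct and follows the same route the paper indicates: writing $-\Psi_n(x)=(1-x^n)/\Phi_n(x)$, combining Nicol's identity \eqref{eq:nicol} with the expansion $\log(1-x^n)=-\sum_{j\ge1}x^{nj}/j$, and reading off the $k^{th}$ Taylor coefficient. No issues.
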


Note that $\Psi_n(-1)=0$ if and only if $n$ is even,
and so the best we can hope for is
the determination of the logarithmic derivatives of $\Psi_n$ at $-1$ for 
all odd integers $n.$

\begin{Thm} \label{thm:kth-log-derivinverse:1}
  Let $k \ge 1$ and $n\ge 3$ be integers with $n$ odd. Then
  \[(\log \Psi_n)^{(k)}(-1) = (-1)^k \sum_{j = 1}^k \frac{B_j^+ s(k,j)(2^j-1)}{j}(n^j- J_j(n)). \]
\end{Thm}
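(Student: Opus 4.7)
The plan rests on the factorization $\Psi_n(x) = \prod_{d \mid n,\, d < n} \Phi_d(x)$, which turns the $k^{th}$ logarithmic derivative into a sum:
\[ (\log \Psi_n)^{(k)}(-1) = (\log \Phi_1)^{(k)}(-1) + \sum_{\substack{d \mid n \\ 3 \le d < n}} (\log \Phi_d)^{(k)}(-1). \]
The first term is handled directly: since $\Phi_1(x) = x - 1$, we compute $(\log \Phi_1)^{(k)}(-1) = -(k-1)!/2^k$. For each remaining $d$, which is odd and at least $3$ because $n$ is odd, Theorem \ref{thm:kth-log-deriv:-1} applies with $\alpha_d = 2$ and gives
\[ (\log \Phi_d)^{(k)}(-1) = (-1)^k \sum_{j=1}^k \frac{B_j^+ s(k,j)}{j} J_j(2d) = (-1)^k \sum_{j=1}^k \frac{B_j^+ s(k,j)(2^j - 1)}{j}\, J_j(d), \]
where the last equality uses the multiplicativity of $J_j$ and $\gcd(2, d) = 1$, giving $J_j(2d) = (2^j - 1) J_j(d)$.

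Next I would swap the order of summation and telescope the Jordan totients. Since the Gauss identity \eqref{eq:jordaan} gives $\sum_{d \mid n} J_j(d) = n^j$ and $J_j(1) = 1$, we obtain
\[ \sum_{\substack{d \mid n \\ 3 \le d < n}} J_j(d) = n^j - J_j(n) - 1. \]
Substituting this into the previous display and regrouping separates the expression into the target term $(-1)^k \sum_j B_j^+ s(k,j)(2^j-1)(n^j - J_j(n))/j$ plus an error
\[ -\frac{(k-1)!}{2^k} - (-1)^k \sum_{j=1}^{k} \frac{B_j^+ s(k,j)(2^j-1)}{j}, \]
which needs to vanish for the theorem to hold.

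The final step is to show this error is zero, and this is where Theorem \ref{thm:gessel} enters, specialised to $n = 2$. One computes $\sigma_k(2) = (\zeta_2 - 1)^{-k} = (-2)^{-k} = (-1)^k/2^k$, and then Theorem \ref{thm:gessel} yields
\[ \sum_{j=1}^k \frac{s(k,j) B_j^+ (2^j-1)}{j} = -(k-1)! \sigma_k(2) = -(-1)^k (k-1)!/2^k, \]
which after multiplication by $(-1)^k$ kills the error exactly.

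There is no substantial obstacle here; the computation is mostly bookkeeping. The one subtle point is that the divisor $d = 1$ must be treated separately because the general formula in Theorem \ref{thm:kth-log-deriv:-1} excludes $n = 2$ (equivalently $n\alpha_n = 2$), and this is precisely why the auxiliary identity coming from Theorem \ref{thm:gessel} at $n = 2$ appears so naturally in the final cancellation.
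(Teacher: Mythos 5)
Your proof is correct and follows essentially the same route as the paper: factor $\Psi_n$ into its cyclotomic divisors, apply Theorem \ref{thm:kth-log-deriv:-1} (equivalently \eqref{eq:log-kronecker:-1}) with $\alpha_d=2$ for each odd divisor $d$, and telescope $\sum_{d\mid n,\,d\ne n}J_j(d)=n^j-J_j(n)$ via \eqref{eq:jordaan}. The only divergence is your separate treatment of $d=1$: Theorem \ref{thm:kth-log-deriv:-1} is stated for all $n\ne 2$ (it excludes $n=2$ because $\Phi_2(-1)=0$, and for $n=1$ one has $\alpha_1=2$, $n\alpha_n=2$), so the $d=1$ term already fits the general formula with $J_j(2)=2^j-1$, and your detour through Theorem \ref{thm:gessel} at $n=2$ --- while perfectly valid --- merely re-derives that case.
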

\begin{proof}
  The result follows on invoking \eqref{eq:log-kronecker:-1} and noting that for every positive integer $j$ we have
  \[ \sum_{d \mid n,\, d \ne n} J_j(2 d) =  J_j(2)\sum_{d \mid n,\, d \ne n} J_j(d) =J_j(2)\Big(\sum_{d \mid n} J_j(d) - J_j(n)\Big) = (2^j-1)(n^j - J_j(n)). \qedhere \]
 \end{proof}
The following corollary is a consequence of the previous identity and its proof is 
similar to that of Theorem \ref{thm:log-kronecker:k}.
\begin{cor}
  Let $k \ge 1$ and $n\ge 3$ be integers with $n$ odd. Then
  \[ \sum_{j = 1}^k (-1)^{j} \stirlingtwo{k}{j} (\log \Psi_n)^{(j)}(-1) = \frac{B_{k}^+(2^k-1)}{k} (n^k - J_k(n)). \]
\end{cor}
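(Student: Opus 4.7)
The plan is to mimic the proof of Theorem \ref{thm:log-kronecker:k}, exploiting the duality between Stirling numbers of the first and second kind encoded in \eqref{eq:stirling:matrix:2}. First I would rewrite the identity of Theorem \ref{thm:kth-log-derivinverse:1} by shifting the sign $(-1)^k$ onto the left-hand side, obtaining
\[ (-1)^k (\log \Psi_n)^{(k)}(-1) = \sum_{j=1}^k s(k,j)\, \frac{B_j^+(2^j-1)}{j}\bigl(n^j - J_j(n)\bigr). \]
This is the analogue here of the identity \eqref{eq:log-kronecker:-1} which was the starting point of the proof of Theorem \ref{thm:log-kronecker:k} b).

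Second, I would introduce the vectors $u, v \in \mathbb{Q}^k$ with entries
\[ u_j = \frac{B_j^+(2^j-1)}{j}\bigl(n^j - J_j(n)\bigr), \qquad v_j = (-1)^j (\log \Psi_n)^{(j)}(-1), \]
so that the displayed identity, applied for each $1\le i \le k$, becomes the matrix equation $v = Cu$, where $C$ is the lower triangular $k \times k$ matrix with entries $C_{ij} = s(i,j)$. As recalled in Section \ref{sec:pre:stirling}, the inverse of $C$ is the matrix $S$ with entries $S_{ij} = \stirlingtwo{i}{j}$; applying $S$ to both sides yields $u = Sv$, and reading off the $k$-th coordinate produces
\[ \frac{B_k^+(2^k-1)}{k}\bigl(n^k - J_k(n)\bigr) = \sum_{j=1}^k \stirlingtwo{k}{j} (-1)^j (\log \Psi_n)^{(j)}(-1), \]
which is exactly the claimed identity.

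There is no real obstacle: the whole argument is a routine Stirling inversion performed on the formula of Theorem \ref{thm:kth-log-derivinverse:1}. The only mildly delicate point is the uniform tracking of the sign $(-1)^k$, but this is handled in exactly the same way as in the proof of Theorem \ref{thm:log-kronecker:k} b) (where the analogous bookkeeping appeared with the arguments $d\alpha_d$ in place of the combination $(2^j-1)(n^j - J_j(n))$, these being encapsulated by Theorem \ref{thm:kth-log-derivinverse:1} under the odd-$n$ hypothesis via the identity $\sum_{d\mid n,\,d\ne n} J_j(2d) = (2^j-1)(n^j - J_j(n))$ already used in its proof).
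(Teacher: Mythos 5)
Your proposal is correct and follows exactly the route the paper intends: it declares the corollary ``a consequence of the previous identity'' with a proof ``similar to that of Theorem \ref{thm:log-kronecker:k}'', which is precisely the Stirling-matrix inversion $u = Sv$ applied to Theorem \ref{thm:kth-log-derivinverse:1} that you carry out. No discrepancies to report.
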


\section{Symmetric non-cyclotomic numerical semigroups} \label{sec:ns}

In this section we find a family of symmetric numerical semigroups that are not cyclotomic, proving Theorem \ref{thm:ciolan} as a result. We use the standard terminology of the theory of numerical semigroups, see Section \ref{sec:pre:ns}.

Let $k$ be a positive integer. Let us consider the numerical semigroup 
$$S_k =\{0,k,k+1,k+2,\dots\} \setminus \{2k-1\}.$$
From the definition of $S_k$ it follows that $\mathrm{g}(S_k) = k$ and $\mathrm{F}(S_k) = 2k-1$. It is not difficult to show that the minimal system of generators of $S_k$ is $\{k, k+1, \ldots, 2k-2\}$ and, in particular, $\mathrm{e}(S) = k-1$. Moreover, $S_k$ has the following semigroup polynomial
\begin{equation}
f_k(x) = \mathrm{P}_{S_k}(x) = 1 - x + x^k - x^{2k-1} + x^{2k}
\end{equation}
and hence is symmetric (see Section \ref{sec:pre:ns}). Indeed, the semigroup $S_k$ is the root of the tree of symmetric numerical semigroups with Frobenius number $2k-1$, see \cite{tree-irred}.
In order to establish Theorem \ref{thm:ciolan}, 
in view of the properties of 
$S_k$, it suffices to prove the following result.

\begin{Thm} \label{thm:pedro-family}
  The symmetric numerical semigroup $S_k$ is not cyclotomic for every $k \ge 5$.
\end{Thm}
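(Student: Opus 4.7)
The plan is to argue by contradiction: assume $f_k(x)=1-x+x^k-x^{2k-1}+x^{2k}$ is Kronecker for some $k\ge 5$ and factor it as $f_k=\prod_{d\in\mathcal{D}}\Phi_d^{e_d}$ (with no monomial factor, since $f_k(0)=1$). The argument splits into a uniform identity-based part for $k\ge 7$ and a residual enumeration for $k\in\{5,6\}$.

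For $k\ge 7$ I would apply Theorem \ref{thm:log-kronecker:k}(a) at $x=1$ with index $4$. The key computation is a closed form for the first few logarithmic derivatives. Writing $f_k(x)=1+(x-1)g_k(x)$ where $g_k(x)=x^{2k-1}+x+x^2+\cdots+x^{k-1}$, substituting $x=1+y$, and reading off the Taylor coefficients of $\log(1+yg_k(1+y))$ yields
\[(\log f_k)'(1)=k,\quad (\log f_k)''(1)=3k-2,\quad (\log f_k)'''(1)=-10k+6,\]
\[(\log f_k)^{(4)}(1)=8k^3-60k^2+94k-36.\]
Substituting these into the linear combination on the left-hand side of the identity (recall $\stirlingtwo{4}{1}=1$, $\stirlingtwo{4}{2}=7$, $\stirlingtwo{4}{3}=6$, $\stirlingtwo{4}{4}=1$) gives
\[\sum_{j=1}^{4}\stirlingtwo{4}{j}(\log f_k)^{(j)}(1)=8k^3-60k^2+56k-14=2(2k-1)(2k^2-14k+7).\]
Since $B_4^+=-1/30<0$ and $\sum_{d\in\mathcal{D}}e_dJ_4(d)>0$ as soon as $\mathcal{D}\ne\emptyset$, the identity forces this quantity to be strictly negative. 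Yet $2k^2-14k+7>0$ whenever $k>(7+\sqrt{35})/2\approx 6.46$, producing the desired contradiction for every $k\ge 7$.

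For $k\in\{5,6\}$ the expression above is negative (values $-234$ and $-110$), so the $j=4$ identity alone is consistent with $f_k$ being Kronecker. I would instead use Corollary \ref{reformulation}: explicit evaluation of $f_k(\zeta_m)$ for $m\in\{1,2,3,4,6\}$ yields $|f_k(1)|=1$, $|f_k(-1)|\in\{3,5\}$, and small integer values at $\zeta_3,i,\zeta_6$ depending on $k\bmod 6$. This forces $e_0=0$, rules out every prime-power cyclotomic divisor, and restricts the remaining permissible $\Phi_d$ drastically. For $k=5$ one verifies that among $d$ with $\varphi(d)\le 10$ only $\Phi_{18}$ survives, and $\varphi(18)=6$ cannot be combined with further allowed divisors to reach $\sum e_d\varphi(d)=10=\deg f_5$. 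For $k=6$, the constraint $|f_6(\zeta_3)|=4$ demands two contributions from $\Phi_{3\cdot 2^j}$'s, but $\Phi_6,\Phi_{12}$ are forbidden (by $|f_6(-1)|=5$ and $|f_6(i)|=1$ respectively) and the next candidate $\Phi_{24}$ has $\varphi=8$, so two copies already exceed $\deg f_6=12$.

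The main obstacle is the closed-form computation of $(\log f_k)^{(4)}(1)$, which one has to grind through cleanly enough that the factorization into $2(2k-1)(2k^2-14k+7)$ becomes visible; once that is in hand the $k\ge 7$ part is immediate, and the cases $k\in\{5,6\}$ reduce to a short but careful scan of admissible divisors $\Phi_d$ with $\varphi(d)\le 2k$.
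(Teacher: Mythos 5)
Your proposal is correct, and it takes a genuinely different route from the paper's. The paper applies the Stirling identity of Theorem \ref{thm:log-kronecker:k} only at index $2$ (where $B_2^+=1/6>0$), obtaining $\sum_{d\in\mathcal{D}_k}e_dJ_2(d)=48k-24$, and then derives a contradiction by combining the divisibility restrictions of Lemma \ref{lem:fk:14-15} with lower bounds on $\Psi(d)=J_2(d)/\varphi(d)$ over the admissible $d$ (via Remark \ref{rem:log-kronecker:bound} and Table \ref{table:arithmetic}), together with a case analysis on whether $\Phi_6$ or $\Phi_{10}$ divides $f_k$; this handles all $k\ge 5$ uniformly. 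You instead exploit the \emph{sign} of $B_4^+=-1/30$: your values $(\log f_k)'(1)=k$, $(\log f_k)''(1)=3k-2$, $(\log f_k)'''(1)=-10k+6$ and $(\log f_k)^{(4)}(1)=8k^3-60k^2+94k-36$ all check out (e.g.\ against $f_3=\Phi_6\Phi_{12}$, where both sides of the index-$4$ identity equal $-170$), and indeed
\[ \sum_{j=1}^{4}\stirlingtwo{4}{j}(\log f_k)^{(j)}(1)=8k^3-60k^2+56k-14=2(2k-1)(2k^2-14k+7), \]
which must be negative for a Kronecker $f_k$ but is positive for $k\ge 7$. This disposes of $k\ge 7$ with no case analysis and no information about $\mathcal{D}$ beyond its non-emptiness, which is strikingly clean; the price is that the cubic is still negative at $k=5,6$, so those cases need the separate root-of-unity scan. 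That scan is also sound: I verified $|f_5(\zeta_3)|=|f_5(i)|=1$ and $|f_5(\zeta_6)|=3$, which together with $f_5(1)=1$, $f_5(-1)=3$ leave only $d=18$ among $\varphi(d)\le 10$, killed by $6\nmid 10$; and for $k=6$ the exact form of Corollary \ref{lem:factors} at $\zeta_3$ forces total multiplicity $2$ among $\Phi_{3\cdot 2^j}$, with $\Phi_6,\Phi_{12}$ excluded and $2\varphi(24)=16>12$. In short: the paper buys uniformity in $k$ at the cost of a multi-case argument; you buy an almost computation-free argument for $k\ge 7$ at the cost of two residual cases. Both are complete proofs.
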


Note that $f_k(1) = 1$ and $f_k(-1) = 4+(-1)^k$. Thus, the arguments given in Example \ref{Coxeter} can not be reproduced in this case. Indeed, it can be shown that the polynomials $f_k$ satisfy the equations \eqref{eq:log-kronecker:small}. Our proof of Theorem \ref{thm:pedro-family} uses a refined version of the lower bound given in Remark \ref{rem:log-kronecker:bound}.
First, we need the following lemmas. For notational convenience we use $k\equiv a,b\pmod*{m}$ to mean that either $k\equiv a\pmod*{m}$ or $k\equiv b\pmod*{m}$.

\begin{lem} \label{lem:fk:single-roots}
Put $ \zeta_n=e^{2\pi i/n}. $ 
We have 
$f_k(\zeta_{6})=0$ if and only if $k\equiv 1,3\pmod*{6},$
$f_k(\zeta_{10})=0$ if and only if $k\equiv 2,4\pmod*{10}$ and
$f_k(\zeta_{12})=0$ if and only if $k\equiv 3,4\pmod*{12}.$
Furthermore, we have $ f_k'(\zeta_6)\ne 0, $ $ f_k'(\zeta_{10})\ne 0 $ and $ f_k'(\zeta_{12})\ne 0. $
\end{lem}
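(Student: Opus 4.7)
The proof is a finite case analysis enabled by the observation that, for a primitive $n$th root of unity $\zeta$, each of $\zeta^k$, $\zeta^{2k-1}$, $\zeta^{2k}$ depends on $k$ only through its residue modulo $n$. Hence $f_k(\zeta_n) = 1-\zeta_n+\zeta_n^k-\zeta_n^{2k-1}+\zeta_n^{2k}$ takes only $n$ distinct values as $k$ ranges over $\mathbb{Z}^+$, one per residue class modulo $n$, so it suffices to inspect a single representative $k$ in each class.

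I would organise each case via reduction in the ring $\mathbb{Z}[\zeta_n]\cong\mathbb{Z}[x]/(\Phi_n(x))$, written in the basis $1,\zeta_n,\ldots,\zeta_n^{\varphi(n)-1}$. For $n=6$, where $\zeta_6^2=\zeta_6-1$, the class $k\equiv 1\pmod*{6}$ yields $f_k(\zeta_6)=1-\zeta_6+\zeta_6^2=0$, and the class $k\equiv 3\pmod*{6}$ yields $f_k(\zeta_6)=1-\zeta_6-\zeta_6^5=1-2\operatorname{Re}(\zeta_6)=0$, while each of the remaining four residue classes produces a nonzero $\mathbb{Z}$-linear combination of $1$ and $\zeta_6$. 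Applying the identical procedure with $\Phi_{10}(x)=x^4-x^3+x^2-x+1$ and $\Phi_{12}(x)=x^4-x^2+1$, run over the $10$, respectively $12$, residue classes of $k$, will yield the remaining residue conditions stated in the lemma.

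For the derivative assertion, direct differentiation gives
\[
f_k'(\zeta_n) = -1 + k\zeta_n^{k-1} - (2k-1)\zeta_n^{2k-2} + 2k\zeta_n^{2k-1} = A(\zeta_n) + k\,B(\zeta_n),
\]
where $A(\zeta_n)$ and $B(\zeta_n)$ depend on $k$ only through its residue modulo $n$. For each of the six classes singled out by the first part of the lemma I would verify that $B(\zeta_n)\ne 0$ and check that the unique complex solution $k=-A(\zeta_n)/B(\zeta_n)$ of $f_k'(\zeta_n)=0$ is not an integer in that class (or lies outside the admissible range). For instance, at $\zeta_6$ with $k\equiv 1\pmod*{6}$ one finds $f_k'(\zeta_6)=k(2\zeta_6-1)=ki\sqrt{3}$, which is nonzero for every $k\ge 1$. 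The only obstacle is the bookkeeping --- up to $6+10+12=28$ residue-by-residue reductions plus six derivative checks --- but the calculations are entirely mechanical and no conceptual difficulty arises.
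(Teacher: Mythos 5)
Your proposal is correct and follows essentially the same route as the paper's proof: reduce the exponents of $\zeta_n$ modulo $n$ and perform a finite residue-by-residue verification, with the derivative organised through the affine dependence $f_k'(\zeta_n)=A(\zeta_n)+kB(\zeta_n)$ (the paper's sample case $k\equiv 1\pmod*{6}$ yields exactly your $k(2\zeta_6-1)=ki\sqrt{3}$, up to an inconsequential sign discrepancy in the paper's displayed computation). One small caveat: the lemma asserts $f_k'(\zeta_n)\ne 0$ for \emph{every} $k$, so strictly speaking your $A+kB$ check should be run over all $6+10+12$ residue classes rather than only the six on which $f_k(\zeta_n)=0$, although the latter already suffice for the application to Corollary \ref{cor:60cases}.
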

\begin{proof}
This is a tedious verification. To show, e.g., that $ f_k'(\zeta_6)\ne 0, $ we distinguish 6 cases, depending on the residue of $ k  $ modulo 6. For example, if $ k\equiv 1\pmod 6 ,$ then 
\begin{equation*}
f_k'(\zeta_6)=-1+k-(2k-1)+2k\zeta_6 ^5 
=-1+k-(2k-1)+2k(1-\zeta_6) =k(1-2\zeta_6)\ne 0.\qedhere
\end{equation*}
\end{proof}
\begin{cor}
\label{cor:60cases}
For $k\ge 1$ we have
$$
(f_k,\Phi_6 \Phi_{10}\Phi_{12})
=
\begin{cases}
 \Phi_6 & \text{ if }k\equiv 1,7,9\pmod*{12};\\
 \Phi_{10} & \text{ if }k\equiv 2,4\pmod*{10},\,k\not\equiv 4\pmod*{12};\\
 \Phi_6\Phi_{12} & \text{ if }k\equiv 3\pmod*{12};\\
  \Phi_{12} & \text{ if }k\equiv 4\pmod*{12},\,k\not\equiv 4,52\pmod*{60};\\
   \Phi_{10}\Phi_{12} & \text{ if }k\equiv 4,52\pmod*{60};\\
1 & \text{ otherwise.}
\end{cases}
$$
Furthermore, $\Phi_6^2\nmid f_k,$ 
$\Phi_{10}^2\nmid f_k$ and $\Phi_{12}^2\nmid f_k.$
\end{cor}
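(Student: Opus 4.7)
The plan is to reduce the computation of $(f_k,\Phi_6\Phi_{10}\Phi_{12})$ to three independent divisibility questions. Since $\Phi_6,\Phi_{10},\Phi_{12}$ are distinct irreducible polynomials in $\mathbb{Z}[x]$, they are pairwise coprime, so
\[ (f_k,\Phi_6\Phi_{10}\Phi_{12})=(f_k,\Phi_6)\,(f_k,\Phi_{10})\,(f_k,\Phi_{12}), \]
and each factor $(f_k,\Phi_m)$ equals either $\Phi_m$ (when $\Phi_m\mid f_k$, equivalently $f_k(\zeta_m)=0$) or $1$. So the problem is to determine, in terms of $k\bmod 60$, which of these three polynomials divide $f_k$, and then assemble the results.

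First I would translate the vanishing conditions of Lemma \ref{lem:fk:single-roots} into residue classes tied to the common modulus $\operatorname{lcm}(10,12)=60$. From that lemma: $\Phi_6\mid f_k$ iff $k\equiv 1,3\pmod*{6}$, which refines to $k\equiv 1,3,7,9\pmod*{12}$; $\Phi_{10}\mid f_k$ iff $k\equiv 2,4\pmod*{10}$; and $\Phi_{12}\mid f_k$ iff $k\equiv 3,4\pmod*{12}$. Note in particular that $\Phi_{10}\mid f_k$ forces $k$ to be even, so it automatically excludes $\Phi_6\mid f_k$.

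Next I would carry out a short case analysis, using CRT to combine the mod-12 and mod-10 conditions whenever they interact. The case $\Phi_6\mid f_k$ but $\Phi_{12}\nmid f_k$ gives $k\equiv 1,7,9\pmod*{12}$ (odd, so $\Phi_{10}$ is automatically excluded). The case $\Phi_6\mid f_k$ and $\Phi_{12}\mid f_k$ gives $k\equiv 3\pmod*{12}$ (again odd). The case $\Phi_{10}\mid f_k$ only gives $k\equiv 2,4\pmod*{10}$ together with $k\not\equiv 4\pmod*{12}$. To separate $\Phi_{12}\mid f_k$ with or without $\Phi_{10}\mid f_k$, one solves $k\equiv 4\pmod*{12}$ together with $k\equiv 2,4\pmod*{10}$ via CRT; the solutions are exactly $k\equiv 4\pmod*{60}$ (from $k\equiv 4\pmod{12}$ and $k\equiv 4\pmod{10}$) and $k\equiv 52\pmod*{60}$ (from $k\equiv 4\pmod{12}$ and $k\equiv 2\pmod{10}$). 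Thus $\Phi_{10}\Phi_{12}\mid f_k$ iff $k\equiv 4,52\pmod*{60}$, while $\Phi_{12}$ only occurs when $k\equiv 4\pmod*{12}$ and $k\not\equiv 4,52\pmod*{60}$. Collecting these yields precisely the six cases in the statement.

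The final claims $\Phi_6^2\nmid f_k$, $\Phi_{10}^2\nmid f_k$, $\Phi_{12}^2\nmid f_k$ are immediate from the second half of Lemma \ref{lem:fk:single-roots}: $f_k'(\zeta_m)\ne 0$ for $m\in\{6,10,12\}$ means that whenever $\zeta_m$ is a root of $f_k$, it is a simple root, and since $\Phi_m$ is irreducible this forces its multiplicity in $f_k$ to be at most one. There is no substantive obstacle; the proof is essentially organized bookkeeping once the mod-12 and mod-10 data from Lemma \ref{lem:fk:single-roots} are in hand, and the only mildly delicate step is the CRT computation identifying the two exceptional residues $4$ and $52$ modulo $60$.
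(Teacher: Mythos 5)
Your proposal is correct and follows exactly the route the paper intends: the corollary is stated without proof as an immediate consequence of Lemma \ref{lem:fk:single-roots}, and your CRT bookkeeping (including the identification of the residues $4$ and $52$ modulo $60$) together with the simple-root argument from $f_k'(\zeta_m)\ne 0$ is precisely the implicit deduction. No gaps.
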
 

\begin{lem} \label{lem:fk:14-15}
Let $k$ and $j$ be positive integers.
\begin{enumerate}
\item The cyclotomic polynomial $\Phi_{p^j}$ does not divide $f_k$ for every prime $p.$
\item The cyclotomic polynomial $\Phi_{2p^j}$ does not divide $f_k$ for every prime $p$ with $p \ne 3, 5$.
\item The cyclotomic polynomial $\Phi_{3p^j}$ does not divide $f_k$ for every odd prime $p.$ 
\end{enumerate}
\end{lem}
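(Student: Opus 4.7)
The unifying idea is Gauss's lemma: if $\Phi_n$ divides $f_k$ in $\mathbb{Z}[x]$ then, $\Phi_n$ being monic, we can write $f_k(x) = \Phi_n(x)\, g(x)$ with $g \in \mathbb{Z}[x]$. Evaluating this identity at a carefully chosen point $\alpha$ gives $f_k(\alpha) = \Phi_n(\alpha)\, g(\alpha)$, and for each of the three situations I would choose $\alpha$ so that $\Phi_n(\alpha)$ is a prime that is forced to divide a value of $f_k$ which is too small.

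For part (a), take $\alpha = 1$: by Lemma \ref{valueat1A} we have $\Phi_{p^j}(1) = p$, and $f_k(1) = 1$, so $p \mid 1$, a contradiction. For part (b), take $\alpha = -1$: Lemma \ref{valueat-1} gives $\Phi_{2p^j}(-1) = p$ for every prime $p$ and every $j \ge 1$ (the case $p = 2$ is included, since $2p^j = 2 \cdot 2^j$ is still of the form $2q^e$, yielding $\Phi_{2^{j+1}}(-1) = 2$). A direct computation gives $f_k(-1) = 4 + (-1)^k \in \{3, 5\}$, so divisibility by $p$ is impossible whenever $p \notin \{3, 5\}$.

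Part (c) requires more care because, for an odd prime $p \ne 3$, the integer $3p^j$ is neither a prime power nor of the form $2q^e$, so $\Phi_{3p^j}(\pm 1) = 1$ and the evaluations at $\pm 1$ are uninformative. If $p = 3$, then $3p^j = 3^{j+1}$ is a prime power and the conclusion follows from part (a). For an odd prime $p \ne 3$, I would evaluate at $\alpha = \zeta_3$ and pass to norms from $\mathbb{Q}(\zeta_3)$ to $\mathbb{Q}$. Since $g(\zeta_3)\, g(\zeta_3^2) \in \mathbb{Z}_{\ge 0}$ (as a Galois-invariant algebraic integer in $\mathbb{Q}$), the identity $f_k(\zeta_3) = \Phi_{3p^j}(\zeta_3)\, g(\zeta_3)$ forces
\[
|f_k(\zeta_3)|^2 = |\Phi_{3p^j}(\zeta_3)|^2 \cdot |g(\zeta_3)|^2 = p^2 \cdot |g(\zeta_3)|^2,
\]
where $|\Phi_{3p^j}(\zeta_3)|^2 = p^2$ is Lemma \ref{lem:tablecombi} applied with $m = 3$ (since $3p^j/3 = p^j$ is a prime power). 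In particular, $p^2 \mid |f_k(\zeta_3)|^2$.

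The main obstacle is to verify that the last divisibility cannot hold. A short case analysis modulo $3$ of the exponents in $f_k(x) = 1 - x + x^k - x^{2k-1} + x^{2k}$ gives
\[
f_k(\zeta_3) = \begin{cases} 4 & \text{if } k \equiv 0 \pmod{3},\\ 1 - \zeta_3 + \zeta_3^2 & \text{if } k \equiv 1 \pmod{3},\\ \zeta_3^2 & \text{if } k \equiv 2 \pmod{3},\end{cases}
\]
so that $|f_k(\zeta_3)|^2 \in \{16, 4, 1\}$. Each of these values is a power of $2$, hence no odd prime square can divide it, yielding the desired contradiction and completing part (c).
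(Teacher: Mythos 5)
Your proof is correct and follows essentially the same route as the paper: evaluate at $1$, $-1$, and $\zeta_3$ respectively, using $\Phi_{p^j}(1)=p$, $\Phi_{2p^j}(-1)=p$, and $|\Phi_{3p^j}(\zeta_3)|=p$ together with the values $f_k(1)=1$, $f_k(-1)=4+(-1)^k$, and $f_k(\zeta_3)\in\{4,-2\zeta_3,\zeta_3^2\}$ (your expression $1-\zeta_3+\zeta_3^2$ equals $-2\zeta_3$, matching the paper). The only cosmetic difference is that the paper invokes Corollary \ref{reformulation} as a black box where you spell out the Gauss's-lemma-plus-norm argument explicitly.
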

\begin{proof}~
\begin{enumerate}
\item We have $f_k(1) = 1$ and hence the conclusion follows by Corollary \ref{reformulation}.
\item We have $f_k(-1) = 4 + (-1)^k$. Consequently, it follows on invoking Corollary \ref{reformulation}
that $\Phi_{2p^j}$ does not divide $f_k$ for every prime $p$ with $p \ne 3, 5.$ \\
\item We have
\begin{equation*}
f_k(\zeta_3)=\begin{cases}4 & \hbox{if } k \equiv 0 \pmod*{3};\\ -2 \zeta_3 & \hbox{if } k \equiv 
1\pmod*{3};\\ \zeta_3^{-1} & \hbox{if } k \equiv 2 \pmod*{3}.\end{cases}
\end{equation*}
It follows that there is no odd prime number dividing $|f_k(\zeta_3)|$. Consequently, Corollary \ref{reformulation} implies that there is no odd prime $p$ such that $\Phi_{3p^j}$ divides $f_k.$ \qedhere
\end{enumerate}
\end{proof}

Now we can prove Theorem \ref{thm:pedro-family}. We make use of the following table. Recall that $J_2 / \varphi = \Psi$.

\begin{table}[H]
\centering
\begin{tabular}{|c|c|c||c|c|c|}
\hline
$n$ & $\Psi(n)$ & $\varphi(n)$ & $n$ & $\Psi(n)$ & $\varphi(n)$\\
\hline
6  & 12 & 2 & 21 & 32 & 12 \\
\hline
10 & 18 & 4 & 22 & 36 & 10 \\
\hline
12 & 24 & 4 & 24 & 48 & 8  \\
\hline
14 & 24 & 6 & 26 & 42 & 12 \\
\hline
15 & 24 & 8 & 28 & 48 & 12 \\
\hline
18 & 36 & 6 & 30 & 72 & 8  \\
\hline
20 & 36 & 8 & 33 & 48 & 20  \\
\hline
\end{tabular}
\caption{First values of $\Psi(n)$ and $\varphi(n)$ when $n$ is not a prime power.}
\label{table:arithmetic}
\end{table}

\begin{proof}[Proof of Theorem \ref{thm:pedro-family}]
Our proof is by contradiction. So we
assume that $S_k$ is cyclotomic
and hence we can write $f_k(x) = \prod_{d \in \mathcal{D}_k} \Phi_d(x)^{e_d}$.
Put $F_k=\sum_{d \in \mathcal{D}_k} e_d \Psi(d) \varphi(d)$.
By Theorem \ref{thm:log-kronecker:k} we have
\[ F_k = \frac{2}{B_2^+}((\log f_k)'(1) + (\log f_k)''(1)). \]\\
On noting that $f_k''(1) = k^2 + 3k - 2$ and $(\log f_k)'' (1)= 3k - 2,$ 
we compute that 
\[ F_k = 48k - 24. \]
Set 
$\mathcal C = \{p^k: p \text{ is a prime}\} \cup \{2p^k: p \text{ is a prime and } p \ne 3, 5\}.$ 
Note that  $\mu_{\mathcal C}(2) = 12$ (cf. Table \ref{table:arithmetic}) and 
that $\mathcal{D}_k \cap \mathcal C = \emptyset$ by Lemma \ref{lem:fk:14-15}.
We distinguish three cases:
\begin{enumerate}
\item $\Phi_6$ and $\Phi_{10}$ do not divide $f_k$. Set ${\mathcal C}_1 = \mathcal C \cup \{6, 10\}$. We have $\mu_{{\mathcal C}_1}(2) = 24$ 
(cf. Table \ref{table:arithmetic}) and hence $24 \deg f_k = 48 k \le F_k$ by Remark \ref{rem:log-kronecker:bound}, a contradiction.
\item $\Phi_6$ divides $f_k$ and $\Phi_{10}$ does not divide $f_k$. Corollary \ref{cor:60cases} tells us that $\Phi_6$ divides $f_k$ exactly. There are two possibilities:
\begin{enumerate}[label=\roman*)]
\item There exists $d_0 \ge 18$ such that $\Phi_{d_0} \mid f_k$. In this case we can find a better lower bound for $F_k$ than the one given by Remark \ref{rem:log-kronecker:bound} for $\mu_{\mathcal C}(2) = 12$. First, we apply the inequality $\Psi(d) \ge 24$ for every $d \in \mathcal{D}_k \setminus \{6, d_0\}$ to Theorem \ref{thm:log-kronecker:k}. This leads to 
$$ 
 F_k = \sum_{d \in \mathcal{D}_k} e_d \varphi(d)\Psi(d) \ge 24 \deg f_k + e_{d_0}\varphi(d_0)(\Psi(d_0) - 24) + \varphi(6) (\Psi(6) - 24),
$$
which on noting that $\Psi(d_0) \ge 32$ and $\varphi(d_0) \ge 6$ (see Table \ref{table:arithmetic}) yields
\begin{equation*}
F_k \ge 24 \deg f_k + 72 e_{d_0} - 24 \ge 48k + 48,
\end{equation*}
contradicting the fact that $F_k = 48k -24$.

\item $d \le 15$ for every $d \in \mathcal{D}_k$. In light of Lemma \ref{lem:fk:14-15} we have $\mathcal{D}_k \subseteq \{6, 12\}$. Furthermore,  Corollary \ref{cor:60cases} tells us that $0 \le e_6,e_{12} \le 1$. Consequently, we only have two possibilities, $f_k = \Phi_6$ and $f_k = \Phi_6 \Phi_{12}$, which are only reached for $k = 1$, respectively $k = 3$. 
\end{enumerate}
\item $\Phi_6$ does not divide $f_k$ and $\Phi_{10}$ divides $f_k$. The proof is similar to the one given in the previous case. We obtain that the only possibilities are $f_k = \Phi_{10}$ and $f_k = \Phi_{10} \Phi_{12}$, which are only reached for $k = 2$, respectively $k = 4$. 
\end{enumerate}
Since by Corollary \ref{cor:60cases} the case where both $\Phi_6$ and $\Phi_{10}$ divide $f_k$ does not occur, the proof is completed.
\end{proof}

The numerical semigroup $S_k$ is cyclotomic for $k \le 4$ (see Table \ref{table:factorization}).

\begin{table}[H]
\centering
\begin{tabular}{|c|c||c|c||c|c|}
\hline
$k$ & Factorization of $f_k$ & $k$ & Factorization of $f_k$ & $k$ & Factorization of $f_k$\\
\hline
1 & $\Phi_6$ &  7 & $\Phi_6 (f_7 / \Phi_6)$ &
13 & $\Phi_6 (f_{13} / \Phi_6)$ \\
\hline
2 & $\Phi_{10}$ & 8 & $f_8$ & 14 & $\Phi_{10} (f_{14} / \Phi_{10})$ \\
\hline
3 & $\Phi_6 \Phi_{12}$ & 9 & $\Phi_6 (f_9 / \Phi_6)$ & 15 & $\Phi_6 \Phi_{12} (f_{15} / (\Phi_{6} \Phi_{12}))$  \\
\hline
4 & $\Phi_{10} \Phi_{12}$ & 10 & $f_{10}$ &  16 & $\Phi_{12} (f_{16} / \Phi_{12})$ \\
\hline
5 & $f_5$ & 11 & $f_{11}$ & 17  & $f_{17}$ \\
\hline
6 & $f_6$ & 12 & $\Phi_{10} (f_{12} / \Phi_{10})$ & 18 & $f_{18}$ \\
\hline
\end{tabular}
\caption{Factorization of $f_k$ into irreducibles over the rationals.}
\label{table:factorization}
\end{table}

\subsection{On the irreducible factors of  $1 - x + x^k - x^{2k-1} + x^{2k}$}
\label{sec:factoring}

Our work allows us to deduce
that $f_k$ is not Kronecker for 
every $k\ge 5,$ but says nothing else about
the factorization of the polynomial
$f_k/(f_k,\Phi_6 \Phi_{10}\Phi_{12}).$
By using a computer algebra package 
we factorized $f_k$ for $k\le 1000$.
\begin{compfact}
For $5\le k\le 1000$ the polynomial 
$f_k/(f_k,\Phi_6 \Phi_{10}\Phi_{12})$ is irreducible of
degree at least $12$ and, moreover, non-cyclotomic.
\end{compfact}

We are tempted to make the following conjecture.
\begin{conj}
For $k\ge 1$ the polynomial $f_k/(f_k,\Phi_6\Phi_{10}\Phi_{12})$ is irreducible over 
the rationals and, moreover, non-cyclotomic. 
\end{conj}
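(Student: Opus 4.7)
The plan has two parts: first identify the cyclotomic part of $f_k$, and then attack irreducibility of the non-cyclotomic quotient $g_k := f_k/(f_k,\Phi_6\Phi_{10}\Phi_{12})$.

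\textbf{Cyclotomic factors.} The starting observation is the identity
\[ f_k(x) = (1-x)(1-x^{2k-1}) + x^k, \]
which is immediate. If $\zeta_n$ is a primitive $n$-th root of unity with $f_k(\zeta_n)=0$, then $\zeta_n^k = -(1-\zeta_n)(1-\zeta_n^{2k-1})$, and taking absolute values gives $|1-\zeta_n|\cdot|1-\zeta_n^{2k-1}|=1$. Since $|1-\zeta_n^{2k-1}|\le 2$ and $|1-\zeta_n|=2\sin(\pi/n)$, this forces $\sin(\pi/n)\ge 1/4$, hence $n\le 12$. Lemma~\ref{lem:fk:14-15} excludes prime-power values of $n$ (and $n=1$ is excluded by $f_k(1)=1$), leaving only $n\in\{6,10,12\}$. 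Corollary~\ref{cor:60cases} describes precisely which of $\Phi_6,\Phi_{10},\Phi_{12}$ divides $f_k$ as a function of $k$ mod $60$, and confirms each appears with multiplicity at most one. Thus $(f_k,\Phi_6\Phi_{10}\Phi_{12})$ captures all cyclotomic factors of $f_k$, so $g_k$ has no cyclotomic factors and is in particular non-cyclotomic whenever $\deg g_k \ge 1$.

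\textbf{Reduction to a Chebyshev associate.} Because $f_k$ is self-reciprocal of even degree $2k$ and $f_k(\pm 1)\neq 0$, every $\mathbb{Q}[x]$-irreducible factor of $f_k$ is either self-reciprocal of even degree or comes paired with its reciprocal. Setting $y=x+x^{-1}$ and defining $u_m(y)$ by $u_0=2$, $u_1=y$, $u_{m+1}=yu_m-u_{m-1}$ (so $u_m(x+x^{-1})=x^m+x^{-m}$), a direct computation gives
\[ f_k(x) = x^k\bigl(u_k(y)-u_{k-1}(y)+1\bigr) = x^k H_k(y), \]
with $H_k\in\mathbb{Z}[y]$ of degree $k$. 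Under this substitution each $\Phi_n$ ($n=6,10,12$) maps to the minimal polynomial over $\mathbb{Q}$ of $\zeta_n+\zeta_n^{-1}$, of degrees $1,2,2$ respectively. Write $\tilde H_k$ for the quotient of $H_k$ by those trace factors that are actually present. The plan then splits into (i) showing $\tilde H_k$ is irreducible in $\mathbb{Q}[y]$, and (ii) excluding the scenario that $g_k$ factors as a product $r(x)\tilde r(x)$ for a non-self-reciprocal irreducible $r\in\mathbb{Q}[x]$, which would collapse to a single factor of $\tilde H_k$ under the substitution.

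\textbf{Main obstacle.} Step (i) is the crux of the conjecture. Eisenstein-type criteria do not apply directly to $\tilde H_k$, and the most natural approaches---Newton-polygon analysis at a well-chosen prime $p$, reduction of $\tilde H_k$ modulo $p$, or Mahler-measure arguments combining Smyth's lower bound for non-reciprocal factors with Lehmer-type estimates for reciprocal ones---are all complicated by the dependence of the cyclotomic contribution on $k\pmod{60}$. A reasonable attack is to proceed residue class by residue class modulo $60$, hoping in each class to find a prime $p$ whose reduction of $\tilde H_k$ admits a factorization pattern that rules out nontrivial splittings, and to address (ii) via the self-reciprocal symmetry of $g_k$ together with an analysis of the real roots of $f_k$ outside the unit circle (which by Theorem~\ref{thm:pedro-family} must exist). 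The computational evidence for $5\le k\le 1000$ is strong, but a uniform proof almost certainly requires a new ingredient exploiting the specific five-term structure of $f_k$.
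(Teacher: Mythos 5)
The statement you are addressing is stated in the paper as a \emph{conjecture}: the paper itself only proves the weaker assertion that $f_k/(f_k,\Phi_6\Phi_{10}\Phi_{12})$ has no cyclotomic factors, records irreducibility as a computational fact for $5\le k\le 1000$, and leaves the irreducibility claim open. Your first part is correct and essentially reproves that Proposition. The paper bounds the admissible $n$ with $\Phi_n\mid f_k$ by writing $\zeta_n^{-k}f_k(\zeta_n)=2\cos(2\pi k/n)-2\cos(2\pi(k-1)/n)+1$ and estimating the cosine difference to get $n\le 14$, then invokes Lemma~\ref{lem:fk:14-15} and Corollary~\ref{cor:60cases}; your variant via $f_k(x)=(1-x)(1-x^{2k-1})+x^k$ evaluated at the specific root $e^{2\pi i/n}$ gives $n\le 12$ directly and is a perfectly good, slightly cleaner alternative. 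Note only that $|1-\zeta_n|=2\sin(\pi/n)$ holds for the particular primitive root $e^{2\pi i/n}$ and not for an arbitrary one, so you should say explicitly that you evaluate there (legitimate, since $\Phi_n\mid f_k$ forces every primitive $n$-th root to be a zero).

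The genuine gap is the irreducibility of $g_k=f_k/(f_k,\Phi_6\Phi_{10}\Phi_{12})$, which is the entire content of the conjecture beyond the cyclotomic analysis. Your Chebyshev substitution $f_k(x)=x^k\bigl(u_k(y)-u_{k-1}(y)+1\bigr)$ is correct, and reducing the problem to (i) irreducibility of $\tilde H_k$ in $\mathbb{Q}[y]$ and (ii) exclusion of a reciprocal pair $r\tilde r$ is a sensible framing; but no argument is actually supplied for either step. The tools you list for (i) --- Newton polygons, reduction modulo a prime chosen per residue class modulo $60$, Mahler-measure estimates --- are named rather than executed, and you concede yourself that a new ingredient is needed. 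The Filaseta--Murphy--Vincent result on quintinomials $x^a-x^b+x^c-x^d+1$ cited in the paper gives no traction either, since it concerns the non-reciprocal part, which is trivially $1$ for the self-reciprocal $f_k$. As it stands, your submission is a correct proof of the Proposition of Section~\ref{sec:factoring} together with a research plan for the conjecture, not a proof of the conjecture.
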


In this regard, Curtis T. McMullen pointed out to us the following reasoning. Consider the Laurent polynomial $f(x,y) = 1 - x + y - y^2/x + y^2$. Note that $f_k(x) = f(x, x^k)$. The polynomial $x f(x,y) = x - x^2 + xy - y^2$ is irreducible in $\mathbb{Q}[x][y]$ due to Eisenstein's criterion, and, thus, so is the Laurent polynomial $f(x,y)$. Therefore, a result of Laurent implies that there are only finitely many pairs of roots of unity $(\xi_n, \xi_m)$ such that $f(\xi_n, \xi_m) = 0$ \cite[Th\'eor\`eme 1]{Laurent}. It follows that there is a positive integer $N$ such that $\gcd(f_k, \Phi_n) = 1$ for every $n \ge N$ and $k \ge 1$.

A stronger conclusion can be obtained by an elementary approach suggested to us by Michael Filaseta. Note that
\begin{align*}
\zeta_n^{-k} f_k(\zeta_n) & = \zeta_n^k + \zeta_n^{-k} - \zeta_n^{k-1} - \zeta_n^{-k+1} + 1 \\
                          & = 2 \cos(2 \pi k / n) - 2 \cos(2 \pi (k-1) / n) + 1.
\end{align*}
Therefore, $f_k(\zeta_n) = 0$ if and only if $1/2 = \cos(2 \pi (k-1) / n) - \cos(2 \pi k / n)$. For every $n \ge 4$ we have
\begin{align*}
   \left|\cos(2 \pi (k-1) / n) - \cos(2 \pi k / n)\right| & = \left|(\cos(2 \pi / n) - 1) \cos(2 \pi k / n) - \sin(2 \pi / n) \sin(2 \pi k / n) \right| \\
   & \le (1 - \cos(2 \pi / n)) \left|\cos(2 \pi k / n)\right| + \sin(2\pi / n) \left|\sin(2 \pi k / n)\right| \\
   & \le 1 - \cos(2 \pi / n) + \sin(2\pi / n).
\end{align*}
The function $1-\cos(x)+\sin(x)$ is increasing in $[0,\pi/2]$. One can check that $1 - \cos(2 \pi / 15) + \sin(2\pi / 15) = 0.49319\ldots < 1/2$. Therefore, for every $n \ge 15$, we conclude that $f_k(\zeta_n) \ne 0$ for every $n \ge 15$. This assertion in conjunction with Lemma \ref{lem:fk:14-15} yields that the only cyclotomic polynomials that may divide $f_k$ are $\Phi_6$, $\Phi_{10}$ and $\Phi_{12}$. Recall that Corollary \ref{cor:60cases} studies these possible factors. Thus we have proved the following proposition.

\begin{prop}
For $k\ge 1$ the polynomial $f_k/(f_k,\Phi_6\Phi_{10}\Phi_{12})$ does not have cyclotomic factors. 
\end{prop}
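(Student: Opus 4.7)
The plan is to follow the elementary trigonometric argument attributed to Filaseta in the preceding paragraph, and then combine it with the small-$n$ analysis already furnished by Lemma \ref{lem:fk:14-15} and Corollary \ref{cor:60cases}. The overall goal is to show: (i) $\Phi_n \nmid f_k$ for every $n \notin \{6,10,12\}$, and (ii) $\Phi_6^2, \Phi_{10}^2, \Phi_{12}^2 \nmid f_k$. Once both hold, the gcd $(f_k,\Phi_6\Phi_{10}\Phi_{12})$ contains every cyclotomic irreducible factor of $f_k$ to its full multiplicity, so the quotient is cyclotomic-free.

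My first step is to rewrite
\[
\zeta_n^{-k} f_k(\zeta_n) \;=\; \zeta_n^{k}+\zeta_n^{-k} - \zeta_n^{k-1} - \zeta_n^{-(k-1)} + 1 \;=\; 2\cos(2\pi k/n) - 2\cos(2\pi (k-1)/n) + 1,
\]
which turns $f_k(\zeta_n)=0$ into the equivalent real equation $\cos(2\pi(k-1)/n) - \cos(2\pi k/n) = 1/2$. Next I expand $\cos(2\pi(k-1)/n)$ via the angle-addition formula and apply the triangle inequality to obtain the $k$-free bound
\[
\bigl|\cos(2\pi(k-1)/n) - \cos(2\pi k/n)\bigr| \;\le\; 1 - \cos(2\pi/n) + \sin(2\pi/n).
\]
Since $x\mapsto 1-\cos x+\sin x$ is strictly increasing on $[0,\pi/2]$, the right-hand side decreases as $n$ grows past $4$; one then checks $1-\cos(2\pi/15)+\sin(2\pi/15)<1/2$ (a clean way is to use $\cos x \geq 1 - x^2/2$ and $\sin x \leq x$ with $x=2\pi/15$, which already gives a bound comfortably below $1/2$). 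Thus $f_k(\zeta_n)\ne 0$ for every $n\ge 15$, i.e.\ $\Phi_n\nmid f_k$ whenever $n\ge 15$.

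For the remaining range $1\le n\le 14$, I invoke Lemma \ref{lem:fk:14-15}. Every $n$ in $\{1,2,3,4,5,7,8,9,11,13\}$ is a prime power, and $14=2\cdot 7$ falls under the ``twice a prime power'' clause with $p=7\ne 3,5$, so none of these values can occur. This leaves precisely $n\in\{6,10,12\}$. Combined with the $n\ge 15$ step, statement (i) is proved. For (ii), Corollary \ref{cor:60cases} explicitly states $\Phi_6^2, \Phi_{10}^2, \Phi_{12}^2\nmid f_k$, so (ii) is immediate. Hence $(f_k,\Phi_6\Phi_{10}\Phi_{12})$ absorbs all cyclotomic factors of $f_k$, and the quotient has none.

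The only real obstacle is verifying the numerical inequality at $n=15$ rigorously rather than by a decimal citation; the rest of the argument is structural and uses only standard trigonometric identities together with results already established in the paper. A minor bookkeeping point is checking that $n=1$ is harmless (indeed $f_k(1)=1\ne 0$), and that the cases $n\in\{6,10,12\}$ at small $k$ (where $f_k$ itself may equal a single $\Phi_d$ or a product $\Phi_d\Phi_{d'}$) are consistent with the gcd cancellation; Corollary \ref{cor:60cases} already records exactly what happens and confirms no higher powers appear.
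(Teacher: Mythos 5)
Your argument is correct and is essentially identical to the paper's proof: the same rewriting of $\zeta_n^{-k}f_k(\zeta_n)$ as $2\cos(2\pi k/n)-2\cos(2\pi(k-1)/n)+1$, the same triangle-inequality bound $1-\cos(2\pi/n)+\sin(2\pi/n)$, the same monotonicity reduction to the single check at $n=15$, and the same appeal to Lemma \ref{lem:fk:14-15} for $n\le 14$ and to Corollary \ref{cor:60cases} for the multiplicities of $\Phi_6,\Phi_{10},\Phi_{12}$.

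One caveat on your parenthetical rigorization of the numerical step: the bounds $\cos x\ge 1-x^2/2$ and $\sin x\le x$ at $x=2\pi/15\approx 0.41888$ give $1-\cos x+\sin x\le x^2/2+x\approx 0.0877+0.4189=0.5066$, which is \emph{not} below $1/2$, so that particular estimate fails. The true value $0.49319\ldots$ is close enough to $1/2$ that you need one more term, e.g.\ $\sin x\le x-x^3/6+x^5/120$, which yields a bound of about $0.4945<1/2$. This does not affect the structure of the proof, only the suggested way of certifying the inequality.
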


The polynomials $f_k$ are of the form $x^a - x^b + x^c - x^d + 1$. These polynomials have been studied in \cite{Filaseta}, where the authors conclude that their non-reciprocal part is either irreducible or $1$ \cite[Corollary 1.3]{Filaseta}. 
Recall that 
the non-reciprocal part of $f$ is 
defined as
$f$ divided by its self-reciprocal irreducible factors. Since $f_k$ is self-reciprocal, its non-reciprocal part is obviously equal 
to $1$.

\section{Appendix by Pedro A. Garc\'ia-S\'anchez: alternative proof of Theorem \ref{thm:ciolan} \ref{item:ciolan:f}}
A quick search with the \texttt{gap} \cite{gap} package \texttt{numericalsgps} \cite{numericalsgps}, shows 
that the numerical semigroup $\langle 5,6,7,8\rangle$ is the symmetric numerical semigroup 
having smallest possible Frobenius number (and thus genus and conductor) that is not cyclotomic. Its Frobenius number is nine. Thus it is natural question to ask whether or not  for every odd integer greater than 
seven there exists a symmetric numerical semigroup that is not cyclotomic with this number as its Frobenius number. The answer is yes, and the proof we give here is based 
on the construction of the tree %set
of irreducible numerical semigroups with given genus (and thus given Frobenius number) presented in \cite{tree-irred}.

Let $k$ be a positive integer with $k \ge 3$. The numerical semigroup 
\[ S_k = \{0,k,k+1,k+2,\dots\} \setminus \{2k-1\} = \left<k,k+1,\ldots,2k-2\right> \]
turns out to be symmetric and its Frobenius number is $F = 2k-1$. Moreover, we have $\mathrm P_{S_k}(x)=1-x+x^{k}-x^{2k-1}+x^{2k}$ (see Section \ref{sec:ns}). 

As $F/2<k<F$, $2k-F = 1 \not \in S$, $3k \neq 2F$, $4k \neq 3F$ and $F-k < k = \mathrm m(S)$, we know by \cite[Theorem 2.9]{tree-irred} that $S'=(S\setminus \{k\})\cup\{F-k\}$ is again symmetric.
Here we get $S'=\{0,k-1,k+1,\ldots,2k-2,2k,\ldots\}$
and 
\[ \mathrm P_{S'}(x)=1-x+x^{k-1}-x^k+x^{k+1}-x^{2k-1}+x^{2k}. \]

Let us now construct a son of $S'$ in the tree of irreducible numerical semigroups with Frobenius number $F$. Assume that $F>15$ (and so $k>8$). Then $k+2$ %=(F+1)/2+2$ 
is a minimal generator of $S'$ such that $F/2<k+2<F$, $2(k+2)-F=5\not\in S$, $3(k+2)\neq 2F$, $4(k+2)\neq 3F$ and $F-(k+2)<k-1=\mathrm m(S')$. Hence we can apply again \cite[Theorem 2.9]{tree-irred} to obtain that $S''=(S' \setminus \{k+2\})\cup\{F-(k+2)=k-3\}$ is a symmetric numerical semigroup with Frobenius number $F$. 
We have
$\{0,k-3,k-1,k+1,k+3,\ldots, 2k-2,2k\}$, 
and so 
\[\mathrm P_{S''}(x)=1-x+x^{k-3}-x^{k-2}+x^{k-1}-x^k+x^{k+1}-x^{k+2}+x^{k+3}-x^{2k-1}+x^{2k}.\]

Assume now that $k$ is even (and thus $F \equiv 3({\rm mod~}4)$). Then $\mathrm P_{S''}(-1)=1+1-1-1-1-1-1-1-1+1+1=-3<0$, and in light of Proposition \ref{evalkronecker}, $S''$ is not cyclotomic. 
This proves the following result.

\begin{prop}
Let $F>15$ be an integer with $F\equiv 3({\rm mod~}4)$. Then 
there exists a symmetric numerical semigroup with 
Frobenius number $F$ that is not cyclotomic.
\end{prop}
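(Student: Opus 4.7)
The plan is to build, for every $F>15$ with $F\equiv 3\pmod{4}$, a specific symmetric numerical semigroup $S''$ with $\mathrm{F}(S'')=F$ whose semigroup polynomial satisfies $\mathrm{P}_{S''}(-1)<0$. Since $\mathrm{P}_{S''}$ is monic with $\mathrm{P}_{S''}(0)=1$ and $\mathrm{P}_{S''}(1)=1$ (both hold for every numerical semigroup), Proposition~\ref{evalkronecker}(b) then forces $\mathrm{P}_{S''}$ to be non-Kronecker and hence $S''$ to be non-cyclotomic. Writing $F=2k-1$, the hypothesis translates into $k$ even with $k\ge 10$ (equivalently $F\ge 19$, covering all values $F\equiv 3\pmod 4$ above $15$).

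The main tool will be \cite[Theorem 2.9]{tree-irred}, which says that if $S$ is symmetric with Frobenius number $F$ and $n$ is a minimal generator of $S$ satisfying $F/2<n<F$, $2n-F\notin S$, $3n\ne 2F$, $4n\ne 3F$, and $F-n<\mathrm{m}(S)$, then $(S\setminus\{n\})\cup\{F-n\}$ is again symmetric with Frobenius number $F$. Starting from the ``root'' $S_k=\{0,k,k+1,\dots\}\setminus\{2k-1\}$ recalled in Section~\ref{sec:ns}, I would apply this theorem twice. First, with $n=k$ in $S_k$, the five conditions are immediate (in particular $2k-F=1\notin S_k$ and $F-k=k-1<k=\mathrm{m}(S_k)$), producing $S'=(S_k\setminus\{k\})\cup\{k-1\}$. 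Second, with $n=k+2$ in $S'$, I would verify $2(k+2)-F=5\notin S'$, $3(k+2)\ne 2F$, $4(k+2)\ne 3F$ (all of which hold for $k\ge 9$), and the crucial $F-(k+2)=k-3<k-1=\mathrm{m}(S')$; this yields the symmetric semigroup $S''=(S'\setminus\{k+2\})\cup\{k-3\}$ with Frobenius number $F$.

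With
\[ S''\cap[0,2k]=\{0,\,k-3,\,k-1,\,k+1,\,k+3,\,k+4,\,\dots,\,2k-2,\,2k\}, \]
the definition of the semigroup polynomial gives the eleven-term expression
\[ \mathrm{P}_{S''}(x)=1-x+x^{k-3}-x^{k-2}+x^{k-1}-x^k+x^{k+1}-x^{k+2}+x^{k+3}-x^{2k-1}+x^{2k}. \]
Since $k$ is even the exponents $k-3,k-1,k+1,k+3,2k-1$ are odd while $k-2,k,k+2,2k$ are even, and a careful sign tally gives $\mathrm{P}_{S''}(-1)=1+1-1-1-1-1-1-1-1+1+1=-3<0$, which by Proposition~\ref{evalkronecker}(b) completes the proof.

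The work is really only bookkeeping: checking the five hypotheses of \cite[Theorem 2.9]{tree-irred} at each step, then tracking signs through an eleven-term alternating sum. The only subtle point—and the reason the threshold $F>15$ appears—is that the inequality $F-(k+2)<\mathrm{m}(S')$ at the second step needs $k\ge 9$, so the three small cases $F\in\{7,11,15\}$ really are excluded by the method and must be handled (if at all) by other means.
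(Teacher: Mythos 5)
Your proposal is correct and follows essentially the same route as the paper's appendix proof: two applications of \cite[Theorem 2.9]{tree-irred} starting from $S_k$, producing the same semigroup $S''$, the same eleven-term polynomial, and the same evaluation $\mathrm{P}_{S''}(-1)=-3<0$ combined with Proposition \ref{evalkronecker}. One small slip in your closing commentary: the condition that actually forces the threshold $k>8$ is not $F-(k+2)<\mathrm{m}(S')$ (which reads $k-3<k-1$ and holds for all $k$), but rather $3(k+2)\ne 2F$, which fails at $k=8$, and $2(k+2)-F=5\notin S'$, which fails at $k=6$; this does not affect the validity of your argument since you only invoke the conditions for $k\ge 10$.
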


Now assume that $k=(F+1)/2$ is odd (and thus $F\equiv 1({\rm mod~}4)$) and $F>9$. Then $k+1$ is a minimal generator of $S_k$ with $F/2<k+1<F$, $2(k+1)-F=3\not\in S_k$, $3k\neq 2F$, $4k\neq 3F$, and $F-(k+1)<\mathrm m(S_k)$. Hence $\bar{S}=(S_k \setminus\{k+1\})\cup\{F-(k+1)=k-2\}$ is a symmetric numerical semigroup with Frobenius number $F$. We have $\bar{S} = \{0,k-2,k,k+2,\ldots,2k-2, 2k, \ldots\}$.  
Thus 
\[
\mathrm P_{\bar{S}}(x)=1-x+x^{k-2}-x^{k-1}+x^k-x^{k+1}+x^{k+2}-x^{2k-1}+x^{2k},
\]
and $\mathrm P_{\bar{S}}(-1)=1+1-1-1-1-1-1+1+1=-1$.

\begin{prop}
Let $F>9$ be an integer satisfying $F\equiv 1({\rm mod~}4)$. Then there exists a symmetric numerical semigroup with Frobenius number $F$ that is not cyclotomic.
\end{prop}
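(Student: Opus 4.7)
The plan is to finish the argument of the preceding paragraph with a one-line appeal to Proposition \ref{evalkronecker}. The structural content has already been done: for $F > 9$ with $F \equiv 1\pmod 4$, setting $k = (F+1)/2$ (which is odd and at least $5$), one starts from the symmetric numerical semigroup $S_k$ with Frobenius number $F$, and applies \cite[Theorem 2.9]{tree-irred} to the minimal generator $k+1$ to produce the symmetric numerical semigroup $\bar{S}=(S_k \setminus \{k+1\})\cup\{k-2\}$, again with Frobenius number $F$. The hypotheses of that theorem were verified in the preceding paragraph ($k+1$ a minimal generator with $F/2 < k+1 < F$, $2(k+1)-F = 3 \notin S_k$, $3k \ne 2F$, $4k \ne 3F$, and $F-(k+1) < \mathrm m(S_k)$), and the resulting semigroup polynomial was computed explicitly as
\[
\mathrm P_{\bar{S}}(x) = 1 - x + x^{k-2} - x^{k-1} + x^k - x^{k+1} + x^{k+2} - x^{2k-1} + x^{2k}.
\]

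To conclude non-cyclotomicity, I would simply read off from this explicit expression that $\mathrm P_{\bar{S}}(0) = 1 \ne 0$ and $\mathrm P_{\bar{S}}(1) = 1 \ne 0$ (the nine nonzero coefficients $+1,-1,+1,-1,+1,-1,+1,-1,+1$ sum to $1$), so the hypotheses of Proposition \ref{evalkronecker}(b) are met. The computation $\mathrm P_{\bar{S}}(-1) = -1$ already recorded above then contradicts the necessary condition $f(-1) \ge 0$ satisfied by every Kronecker polynomial $f$ with $f(0) \ne 0$ and $f(1) \ne 0$. Hence $\mathrm P_{\bar{S}}$ is not Kronecker, and $\bar{S}$ is, by definition of cyclotomic numerical semigroup, not cyclotomic.

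There is no real obstacle once the tree-of-irreducibles construction from \cite{tree-irred} is invoked: it supplies a symmetric semigroup $\bar{S}$ with Frobenius number $F$ whose semigroup polynomial has a small, explicit support, and then Proposition \ref{evalkronecker} provides a purely elementary sign-based obstruction to being Kronecker. The only mild bookkeeping item is that $k-2$ genuinely replaces $k+1$, i.e.\ $k-2 \notin S_k$ and $k-2 \ne k+1$; this is immediate since the gap set of $S_k$ below $k$ is $\{1,2,\ldots,k-1\}$ and $k-2 \ge 3$ because $k \ge 5$. The main nontrivial ingredient of the whole argument is therefore not the non-cyclotomicity test but the existence result \cite[Theorem 2.9]{tree-irred}, which guarantees that the switch from $k+1$ to $k-2$ preserves both symmetry and the Frobenius number.
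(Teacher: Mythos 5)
Your proposal is correct and follows the paper's own argument essentially verbatim: the paper likewise constructs $\bar S$ from $S_k$ via \cite[Theorem 2.9]{tree-irred}, computes $\mathrm P_{\bar S}(-1)=-1$, and concludes non-cyclotomicity from Proposition \ref{evalkronecker}. Your explicit verification that $\mathrm P_{\bar S}(0)\ne 0$ and $\mathrm P_{\bar S}(1)\ne 0$ is a small but welcome addition that the paper leaves implicit.
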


By using \texttt{numericalsgps} we see that for Frobenius number $11$ and $15$ there is a symmetric numerical semigroup that
is not cyclotomic.

{\small
\begin{verbatim}
gap> MinimalGenerators(First(IrreducibleNumericalSemigroupsWithFrobeniusNumber(11), 
    s->not(IsCyclotomicNumericalSemigroup(s))));
[ 5, 7, 8, 9 ]
gap> MinimalGenerators(First(IrreducibleNumericalSemigroupsWithFrobeniusNumber(15), 
    s->not(IsCyclotomicNumericalSemigroup(s))));
[ 6, 7, 10, 11 ]
\end{verbatim}
}

Combining this information with the  two propositions above, one obtains an alternative proof of Theorem \ref{thm:ciolan} \ref{item:ciolan:f}.

\bigskip
\noindent {\tt Acknowledgements}. 
Part of this paper was developed during a one month internship in 2016 of the first author at the Max Planck Institute for Mathematics in Bonn and a subsequent visit in 2017. He would like to thank the second author for the opportunity given and the staff for their hospitality. Furthermore, he is grateful to the second author and Pedro A. Garc\'ia-S\'anchez for their teachings and guidance.

The first author is supported by the fellowship ``Beca de Iniciaci\'on a la Investigaci\'on para Estudiantes de Grado del Plan Propio 2017'' (Vicerrectorado de Investigaci\'on y Transferencia, Universidad de Granada).

The present paper has \cite{BHM} as one of its pilars.
We thank our coauthor Bartolomiej Bzd\c{e}ga for his substantial
contribution to that project.
The authors would 
further like to thank Alexandru Ciolan for proofreading
an earlier version. Last, but not least, the authors thank 
Michael Filaseta and Curtis T.
McMullen for extensive feedback regarding the factorization of 
the fewnomial $1 - x + x^k - x^{2k-1} + x^{2k}.$ This feedback
is now part of Section \ref{sec:factoring}. Andrej Schinzel kindly suggested
us to contact Michael Filaseta.

\end{document}